\newcommand{\fmin}[1]{f_{min}#1}
\newcommand{\imin}[1]{\mathcal{I}_{min}#1}
\newcommand{\norm}[1]{\left\lVert#1\right\lVert}
\newcommand{\abs}[1]{\left\lvert#1\right\lvert}
\newcommand{\ssuc}{\overline{\mathcal{S}}}
\newcommand{\Deltamin}{\Delta_{\min}}
\newcommand{\succe}{\overline{\mathcal{S}}_{\epsilon}}
\newcommand{\succer}{\succe^{\mathcal{R}}}
\newcommand{\succenr}{\succe^{\mathcal{NR}}}
\newcommand{\unsucce}{\mathcal{U}_{\epsilon}}
\newcommand{\accer}{\mathcal{A}_{\epsilon}^{\mathcal{R}}}
\newcommand{\accenr}{\mathcal{A}_{\epsilon}^{\mathcal{NR}}}
\newtheorem{theorem}{Theorem}[section]
\newtheorem{lemma}[theorem]{Lemma}
\newtheorem{corollary}[theorem]{Corollary}
\newtheorem{assumption}[theorem]{Assumption}
\newtheorem{remark}[theorem]{Remark}
\theoremstyle{definition}\newtheorem{defn}[theorem]{Definition}
\title{A derivative-free trust-region approach for \\ Low Order-Value Optimization problems\footnote{This study was financed in part by the Coordenação de Aperfeiçoamento de Pessoal de Nível Superior – Brasil (CAPES) – Finance Code 001}} %
\author{ %
    \href{https://orcid.org/0000-0003-4660-4611}{\includegraphics[scale=0.6]{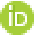}}
    A. E. Schwertner\thanks{Graduate Program in Mathematics, State University of Maringá, Paraná, Brazil. \url{schwertner.ae@gmail.com}} %
    \and
    \href{https://orcid.org/0000-0003-4963-0946}{\includegraphics[scale=0.6]{images/orcid_logo.eps}}
    F. N. C. Sobral\thanks{{\faEnvelopeO}~Corresponding author, Department of Mathematics, State University of Maringá, Paraná, Brazil. \url{fncsobral@uem.br}} %
}
\begin{document}
\maketitle
	
\begin{abstract}
    The Low Order-Value Optimization (LOVO) problem involves minimizing the minimum among a finite number of function values within a feasible set. LOVO has several practical applications such as robust parameter estimation, protein alignment, portfolio optimization, among others. In this work, we are interested in the constrained nonlinear optimization LOVO problem of minimizing the minimum between a finite number of function values subject to a nonempty closed convex set where each function is a black-box and continuously differentiable, but the derivatives are not available. We develop the first derivative-free trust-region algorithm for constrained LOVO problems with convergence to weakly critical points. Under suitable conditions, we establish the global convergence of the algorithm and also its worst-case iteration complexity analysis. An initial open-source implementation using only linear interpolation models is developed. Extensive numerical experiments and comparison with existing alternatives show the properties and the efficiency of the proposed approach when solving LOVO problems.
\end{abstract}

\noindent
\textbf{Keywords}: Derivative-Free Optimization, Trust-Region Methods, Low Order-Value Optimization, Worst-case Iteration Complexity

\section{Introduction}\label{sec:introduction}

Let us consider the following constrained nonlinear optimization problem
\begin{equation}\label{eq_lovo_sem_derivadas}
    \min_{x \in \Omega} \fmin{(x)} = \min_{x \in \Omega} \min\{f_{1}(x), \dots,f_{r}(x)\},
\end{equation}
where $\Omega \subset \mathbb{R}^{n}$ is a nonempty closed convex set and $f_{i}: \mathbb{R}^{n} \to \mathbb{R}$, $i = 1, \dots, r$, are continuously differentiable black-box functions. Although we consider that each function that composes the objective function is smooth, we assume that there is no information available about its first and second-order derivatives. 

Problem (\ref{eq_lovo_sem_derivadas}) is known as the Low Order-Value Optimization problem (LOVO), for which extensive theory and application exist \cite{AMMY2009, M2009}. Applications include protein alignment \cite{AMM2008, AMMY2009, M2012, MAM2007}, robust parameter estimation \cite{AMMY2009, CLSS2021}, portfolio optimization \cite{BBKM2011, M2012}, robust response surface methodology \cite{PFF2021}, among others. In recent years, several authors have developed LOVO versions of classical continuous optimization methods, such as the Levenberg-Marquardt \cite{CLSS2021}, Augmented Lagrangian \cite{AMMY2009}, Trust-Region \cite{AMM2008} and Coordinate-Search \cite{M2012} algorithms.

In this article, we address derivative-free optimization, which has several scientific, medical, social, and engineering applications \cite{AH2017, CST1998, CSV2009, LMW2019}, specially in situations where the objective function is black-box, derived from a simulation, or without a known analytical expression. Detailed surveys of derivative-free optimization methods can be found in \cite{LMW2019, RS2013}, and theoretical aspects can be revisited in \cite{AH2017, CSV2009}. 

Among the various algorithms developed for derivative-free optimization, variations of the trust-region method stand out. The study of such methods began with Winfield \cite{W1969, W1973} and became popular due to the good numerical performance of the algorithms proposed by Powell \cite{P2002, P2006, P2009}. There are several derivative-free trust-region algorithms described in the literature, which can be applied to unconstrained problems \cite{CST1997, CSV2009a, FMN2009, P2006, WRS2008}, bound-constrained \cite{GTT2011, P2009, W2008}, linearly-constrained \cite{GHA2014, P2015}, convex-constrained \cite{CKPRS2013, HR2021, VKPS2017}, and general-constrained \cite{BFMS2013, CKP2015, FKSS2017, TIS2021} problems, as well as composite nonsmooth optimization \cite{GJV2016, GYY2016, LMZ2021}, and multi-objective optimization \cite{TE2019} problems, among others.

The goal of this work is to develop and analyse a derivative-free optimization algorithm designed for convex-constrained LOVO problems. Note that the LOVO problem can be understood as a particular case of nonsmooth composite optimization. In this sense, \textcite{GJV2016} and \textcite{GYY2016}, proposed model-based derivative-free algorithms for objective functions of the form $f + h \circ g$, where $f$ and $g$ are smooth, and $h$ is convex. However, such methods are not compatible with the specificities of the LOVO problem since the minimum function is not convex. Furthermore, the construction of the models involves the evaluation of $h \circ g$ at each one of the points in the sample set, which translates into higher computational costs when compared to the strategy adopted in our approach. \textcite{LMZ2021} proposed a manifold sampling algorithm for minimizing nonsmooth compositions $g \circ f$, where $g$ is nonsmooth, $f$ is smooth with no information about its derivatives, and $g \circ f$ is a continuous selection. From a theoretical point of view, the algorithm generates a sequence of iterates that converge to a Clarke stationary point. However, in~\cite{M2012} it is shown that the concept of Clarke stationarity is not sufficiently satisfactory for LOVO problems, which makes it necessary to adopt methods that are probably to converge to points that satisfy stronger optimality conditions, such as \cite[Theorem 6.1]{M2012}.

\textcite{HR2021} and \textcite{Roberts2025} define a suitable notion of \(\Lambda\)-poised interpolation models and show convergence and worst-case iteration complexity results for general convex-constrained derivative-free optimization problems. The algorithm is strongly based in the method proposed in~\cite{CKPRS2013}. Recently, convergence and worst-case iteration complexity was also discussed in~\cite{Alvarez2025} in the context of convex-constrained LOVO problems based on derivatives.

The main contributions of the present work can be stated as
\begin{itemize}
    \item We develop a trust-region derivative-free algorithm for convex-constrained LOVO problems, which is suitable when the projection onto the convex set is available. This in turn, is an extension of~\cite{AMM2008} to the derivative-free constrained case;
    \item Inexact interpolation models are allowed, as well as the use of two different radii~\cite{P2006}, improving the results of~\cite{VKPS2017};
    \item Converge theory and worst-case iteration complexity are provided, showing that the behaviour of the algorithm follows the usual trust-region framework;
    \item An efficient and open-source numerical implementation is provided and is competitive against algorithms for nonsmooth derivative-free problems.
\end{itemize}

This work is organized as follows. In Section \ref{sec:df_algorithm}, we introduce our derivative-free trust-region algorithm for LOVO problems. In Section \ref{sec:convergence}, we discuss its convergence analysis and global convergence results. In Section \ref{sec:complexity}, we study the worst-case complexity of our algorithm. Implementation details and numerical experiments are discussed in Section~\ref{sec:experiments}, and conclusions are given in Section \ref{sec:conclusions}.

\paragraph{Notation} $\norm{\cdot}$ denotes the Euclidean norm of vectors and matrices; $\mathcal{P}_{2}^{n}$ is the set of all polynomials of degree less than or equal to $2$ in $\mathbb{R}^{n}$; and $\overline{B}(x^{*}, \delta) = \{ x \in \mathbb{R}^{n} \text{ } | \text{ } \norm{x - x^{*}} \leq \delta \}$.

\section{A derivative-free trust-region LOVO algorithm}\label{sec:df_algorithm}

In order to help us with the theoretical results exposed in this text, consider the following definition.

\begin{defn}\cite[Definition 1]{CLSS2021}
Given a feasible point $x \in \Omega$ we define \(\mathcal{I} = \{1, \dots, r\}\) and \(\imin (x) = \{ i \in \{1, \dots , q\} \hspace{2pt} | \hspace{2pt} f_{i}(x) = \fmin{(x)} \}\).
\end{defn}

We consider the following assumptions on the objective function.

\begin{assumption}\label{hip_f_diferenciavel}
The function $f_{i}: \mathbb{R}^{n} \to \mathbb{R}$, for $i \in \mathcal{I}$, is continuously differentiable and its gradient $\nabla f_{i}: \mathbb{R}^{n} \to \mathbb{R}^{n}$ is Lipschitz continuous, with constant $L_{i} > 0$, in a sufficiently large open bounded domain $\mathcal{X} \subset \mathbb{R}^{n}$.
\end{assumption}

\begin{assumption}\label{hip_f_limitada}
The function $f_{i}: \mathbb{R}^{n} \to \mathbb{R}$, for $i \in \mathcal{I}$, is bounded below in $\Omega \subset \mathbb{R}^{n}$, i.e., there is a constant $M_{i} \in \mathbb{R}$ such that $f_{i}(x) \geq M_{i}$, for all $x \in \Omega$.
\end{assumption}

Note that if Assumption \ref{hip_f_limitada} holds and $M := \min_{i \in \mathcal{I}}\{M_{i}\}$, thus the function $\fmin$ is bounded below in $\Omega$ by the constant $M$.

\subsection{Trust-region framework}\label{subsec:tr_framework}

Our algorithm is based on the trust-region frameworks of \cite{AMM2008, VKPS2017}. At each iteration $k \in \mathbb{N}$, we consider the current iterate $x_{k} \in \Omega$, the associated index $i_{k} \in \imin(x_{k})$ and the quadratic model for $f_{i_{k}}$
\begin{equation}\label{eq_def_modelo}
    \mathrm{m}_{k}(x) = \mathrm{m}_{k}(x_{k} + d) = b_{k} + \mathbf{g}_{k}^{T} d + \frac{1}{2}d^{T}\mathbf{H}_{k}d,
\end{equation}
where $d = x - x_{k} \in \mathbb{R}^{n}$, $b_{k} \in \mathbb{R}$, $\mathbf{g}_{k} \in \mathbb{R}^{n}$ and $\mathbf{H}_{k} \in \mathbb{R}^{n \times n}$ is a symmetric matrix. In \cite{AMM2008}, a similar approach is taken with $\mathbf{g}_{k} = \nabla f_{i_{k}}(x_{k})$ and $\mathbf{H}_{k} = \nabla^{2}f_{i_{k}}(x_{k})$. Note that only one index $i_{k} \in \imin{(x_{k})}$ is chosen at iteration $k \in \mathbb{N}$. Thus, we will omit the indication of such index in the model definition and other expressions that depend on the choice of index.


We also follow the practical ideas of Powell \cite{P2006, P2009}, further developed in \cite{VKPS2017}, of using two radii: $\delta_{k}$ is related to the quality of the model, and $\Delta_{k}$ is associated with the trust-region. Assumption \ref{hip_grad_limitante} defines what ``quality of the model'' means and is a weaker assumption than ``fully linear models'' of \cite{CSV2009}, since it does not especify how to build such models.

\begin{assumption}\label{hip_grad_limitante}
  For all $k \in \mathbb{N}$, $\norm{\nabla f_{i_{k}}(x) - \nabla\mathrm{m}_{k}(x)} \leq \kappa_{g}\delta_{k}$, for all $x \in \mathcal{X} \cap \overline{B}(x_{k}, \delta_{k})$ and some constant $\kappa_{g} > 0$ independent of $x$ and $k$.
\end{assumption}

We define the stationarity measure at $x_{k}$ for the problem of minimizing $\mathrm{m}_{k}$ over the set $\Omega$ by
\begin{equation}\label{eq_def_projecao}
    \pi_{k} = \norm{P_{\Omega}(x_{k} - \mathbf{g}_{k}) - x_{k}},
\end{equation}
where $P_{\Omega}$ denotes the orthogonal projection onto $\Omega$, which exists because it is a closed convex set \cite{CGT2000}. In our context, given $i \in \mathcal{I}$, we say that a point $x_{*} \in \Omega$ is stationary for the problem $\min_{x \in \Omega} f_{i}(x)$ when $\norm{P_{\Omega}(x_{*} - \nabla f_{i}(x_{*})) - x_{*}} = 0$ \cite[Section 12.1.4]{CGT2000}.

At iteration $k \in \mathbb{N}$, the candidate point $x_{k} + d_{k} \in \Omega$ is computed, where $d_{k}$ is the approximate solution of the following trust-region subproblem
\begin{equation}\label{eq_def_subproblema}
    \begin{array}{cc}
        \text{minimize} & \mathrm{m}_{k}(x_{k} + d) \\
        \text{subject to} & x_{k} + d \in \Omega \\
                         & \norm{d} \leq \Delta_{k}.
    \end{array}
\end{equation}
If the model is accurate for $f_{i_{k}}$, we expect that $x_{k} + d_{k}$ may also decrease $\fmin$, since $x_{k} + d_{k} \in \Omega$ and
$
\fmin(x_{k} + d_{k}) \leq f_{i_{k}}(x_{k} + d_{k}) \leq f_{i_{k}}(x_{k}).
$
By ``approximate solution'' we mean that $x_{k} + d_{k}$ must satisfy a sufficient decrease condition of $\mathrm{m}_{k}$, given by Assumption \ref{hip_d_decrescimo}.


\begin{assumption}\label{hip_d_decrescimo}
  The approximate solution of~\eqref{eq_def_subproblema}
  satisfies the sufficient decrease condition
\begin{equation*}
    \mathrm{m}_{k}(x_{k}) - \mathrm{m}_{k}(x_{k} + d_{k}) \geq \theta\pi_{k}\min\bigg\{ \frac{\pi_{k}}{1 + \norm{\mathbf{H}_{k}}}, \Delta_{k}, 1 \bigg\},
\end{equation*}
for some constant $\theta > 0$ independent of $k$.
\end{assumption}

Conditions of this type are well-known in trust-region strategies, and were used in different contexts by several authors, as we can see in \cite{CKPRS2013, VKPS2017}, \cite[Lemma 11.3]{AH2017}, \cite[Assumption AA.1]{CGT2000}, \cite[Theorem 10.1]{CSV2009}, \cite[Assumption 3.3]{HR2021}, \cite[Lemma 4.5]{NW2006}, and \cite[Section 3.1.4]{T2011}.

We accept step $x_{k} + d_{k}$ when the ratio between actual and predicted reductions 
\begin{equation}\label{eq_def_coef_reducao_relativa}
    \rho_{k} = \frac{\fmin (x_{k}) - \fmin (x_{k} + d_{k})}{\mathrm{m}_{k}(x_{k}) - \mathrm{m}_{k}(x_{k} + d_{k})}
\end{equation}
is greater than or equal to a fixed constant $\eta > 0$. In this case, the new iterate becomes $x_{k + 1} = x_{k} + d_{k}$, the model is updated and the trust-region radius $\Delta_{k}$ is
possibly increased. Otherwise, we reject the step and $\Delta_{k}$ is decreased.

\subsection{Algorithm}

In this section, we present the derivative-free trust-region algorithm for solving the LOVO problem (\ref{eq_lovo_sem_derivadas}), without specification about the model update or the solution of subproblem (\ref{eq_def_subproblema}). Our algorithm is based on the ideas discussed by \textcite{AMM2008} and on the algorithms proposed by \textcite{CKPRS2013} and \textcite{VKPS2017}.

\begin{algorithm}[H]\footnotesize
    \LinesNumbered
    \caption{\textsc{Derivative-free trust-region LOVO algorithm}}\label{alg_lowder}
    \KwIn{$x_{0} \in \Omega$, $\beta > 0$, $0 < \delta_{0} \leq \Delta_{0}$, $0 < \tau_{1} \leq \tau_{2} < 1 \leq \tau_{3} \leq \tau_{4}$, $\eta_{1} \in (0, 1)$, $0 \leq \eta < \eta_{1} \leq \eta_{2}$, $\Gamma = 0$, $1 \leq \Gamma_{\max} \in \mathbb{N}$, $i_{0} \in \imin{(x_{0})}$.}
    \For{$k = 0, 1, \dots$}{
        Construct the model $\mathrm{m}_{k}$. \\
        \eIf(\nllabel{alg:criticality}\tcc*[f]{Criticality Phase}){$\delta_{k} > \beta \pi_{k}$}{
            Let $x_{k + 1} = x_{k}$, $i_{k + 1} = i_{k}$, $\rho_{k} = 0$, $d_{k} = 0$, $\delta_{k + 1} = \tau_{1} \delta_{k}$, and choose $\Delta_{k + 1} \in [\tau_{1}\Delta_{k}, \tau_{2}\Delta_{k}]$. \\
            }{
            Find an approximate solution $d_{k}$ for (\ref{eq_def_subproblema}) that satisfies Assumption \ref{hip_d_decrescimo}. \\
            Compute $\rho_{k}$ by (\ref{eq_def_coef_reducao_relativa}). \\
            \eIf(\nllabel{alg:acceptance}\tcc*[f]{Step Acceptance Phase}){$\rho_{k} \geq \eta$}{
                Let $x_{k + 1} = x_{k} + d_{k}$, and choose $i_{k+1} \in \imin (x_{k+1})$. \\
                }{
                Let $x_{k + 1} = x_{k}$, and $i_{k + 1} = i_{k}$. \\
                }
            \bf{if} $\rho_{k} \geq \eta_{1}$ \bf{then} $\Gamma = 0$ \bf{end} \\ 
            \eIf(\nllabel{alg:adjustment}\tcc*[f]{Radii Adjustment Phase}){$\rho_{k} \geq \eta$ \bf{and} $i_{k+1} \neq i_{k}$ \bf{and} $\Gamma \leq \Gamma_{\max}$}{
                Let $\delta_{k+1} = \tau_{4}\delta_{k}$, $\Delta_{k+1} = \tau_{4}\Delta_{k}$, and $\Gamma = \Gamma + 1$.\nllabel{alg:adjustment_increase}
                }(\nllabel{alg:update}\tcc*[f]{Radii Update Phase}){
                Let
                \begin{equation*}
                \delta_{k + 1} = 
                    \begin{cases}
                        \tau_{1}\delta_{k} \text{, if } \rho_{k} < \eta_{1}; \\
                        \tau_{3}\delta_{k} \text{, if } \rho_{k} > \eta_{2} \text{ and } \norm{d_{k}} = \Delta_{k}; \\
                        \delta_{k} \text{, otherwise;} \\
                    \end{cases}
                \end{equation*}
                and
                \begin{equation*}
                \Delta_{k + 1} = 
                    \begin{cases}
                        \tau_{1}\Delta_{k} \text{, if } \rho_{k} < \eta_{1}; \\
                        \tau_{3}\Delta_{k} \text{, if } \rho_{k} > \eta_{2} \text{ and } \norm{d_{k}} = \Delta_{k}; \\
                        \Delta_{k} \text{, otherwise.} \\
                    \end{cases}
                \end{equation*} \\
                }
            }
    }
\end{algorithm}

The general idea of the algorithm is given below:
\begin{itemize}
\item The algorithm allows freedom in the choice of
    $\mathrm{m}_{k}$ as long as it satisfies
    Assumption~\ref{hip_grad_limitante}. However, its practical
    efficiecy greatly relies in the way it is implemented, as
    discussed in Section~\ref{sec:experiments}.
    \item When $\delta_{k}$ is large with respect to $\pi_{k}$, we cannot guarantee that the model accurately represents $f_{i_{k}}$. Hence, if $\delta_{k} > \beta \pi_{k}$, the radius $\delta_{k}$ is reduced in an attempt to find a more accurate model.
    \item In the case where there was a successful iteration ($\rho_{k} \geq \eta$) and another function is taken as the representative of $\fmin$ ($i_{k+1} \neq i_{k}$), we increase $\delta_{k+1}$ and $\Delta_{k+1}$ by a factor of $\tau_{4} \geq 1$.
      After a successful iteration with $\rho_{k} \geq \eta_{1}$ and index swap, this procedure is executed at most $\Gamma_{\max} \in \mathbb{N}$ iterations with index swap satisfying $\eta \leq \rho_{k} < \eta_{1}$. Thus, the radii of the other iterations satisfying $\rho_{k} < \eta_{1}$ continue to be reduced by $\tau_{1}$. By allowing the radius of the trust-region to increase, we expect the algorithm to try larger steps when a new $f_{i_{k + 1}}$ is considered.
      In~\cite{AMM2008}, $\Delta_{k+1}$ is reset to $\Delta_{0}$
      whenever index swap occurs, but we observed that such choice
      results in worse complexity results.
    \item In the case where $\abs{\mathcal{I}} = 1$ (usual convex constrained optimization problem), Algorithm \ref{alg_lowder} is reduced to the derivative-free trust-region algorithm proposed by~\cite{VKPS2017}.
\end{itemize}

\section{Convergence analysis}\label{sec:convergence}

In this section, we will provide global convergence results, in the
LOVO sense, for sequences generated by Algorithm~\ref{alg_lowder}. We
will show that every point of accumulation of a sequence generated by
Algorithm \ref{alg_lowder} is stationary for some index
$i \in \mathcal{I}$.

In the following lemma, we prove that the Hessian of the model $\mathrm{m}_{k}$ is bounded.

\begin{lemma}\label{lema_hessiana_limitada}
Suppose that Assumptions \ref{hip_f_diferenciavel} and \ref{hip_grad_limitante} hold. Thus, for every iteration $k \in \mathbb{N}$,
\begin{equation*}
    \norm{\mathbf{H}_{k}} \leq \kappa_{H} - 1,
\end{equation*}
where $\kappa_{H} := 2\kappa_{g} + L + 1$ and $L := \displaystyle\max_{i \in \mathcal{I}} \{L_{i} \}$.
\end{lemma}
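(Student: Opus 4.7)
The plan is to exploit the quadratic form of $\mathrm{m}_k$ together with Assumption~\ref{hip_grad_limitante} evaluated at two points, combined with the Lipschitz estimate on $\nabla f_{i_k}$ given by Assumption~\ref{hip_f_diferenciavel}. The key observation is that from~\eqref{eq_def_modelo} we have $\nabla \mathrm{m}_k(x_k + d) = \mathbf{g}_k + \mathbf{H}_k d$, so evaluating the gradient of the model at $x_k$ and at $x_k + \delta_k v$ for a unit vector $v$ isolates a term $\delta_k \mathbf{H}_k v$. This allows us to reduce the bound on $\|\mathbf{H}_k\|$ to bounds on the deviation between $\nabla \mathrm{m}_k$ and $\nabla f_{i_k}$.

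Concretely, I would fix an arbitrary unit vector $v \in \mathbb{R}^n$ and write the identity
\begin{equation*}
  \delta_k \mathbf{H}_k v = \bigl[\nabla \mathrm{m}_k(x_k + \delta_k v) - \nabla f_{i_k}(x_k + \delta_k v)\bigr] + \bigl[\nabla f_{i_k}(x_k + \delta_k v) - \nabla f_{i_k}(x_k)\bigr] + \bigl[\nabla f_{i_k}(x_k) - \mathbf{g}_k\bigr],
\end{equation*}
noting that $\nabla \mathrm{m}_k(x_k) = \mathbf{g}_k$. Applying the triangle inequality, Assumption~\ref{hip_grad_limitante} bounds the first and third bracketed terms by $\kappa_g \delta_k$ each, and the Lipschitz continuity of $\nabla f_{i_k}$ from Assumption~\ref{hip_f_diferenciavel} bounds the middle term by $L_{i_k} \delta_k \leq L \delta_k$. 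Dividing through by $\delta_k > 0$ yields $\|\mathbf{H}_k v\| \leq 2\kappa_g + L = \kappa_H - 1$, and since $v$ was an arbitrary unit vector, the result follows from the definition of the operator norm.

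The only subtle point is ensuring the evaluation point $x_k + \delta_k v$ belongs to $\mathcal{X} \cap \overline{B}(x_k, \delta_k)$ so that Assumption~\ref{hip_grad_limitante} applies; this is covered by the phrasing that $\mathcal{X}$ is a ``sufficiently large open bounded domain'' in Assumption~\ref{hip_f_diferenciavel}, which is meant precisely to accommodate such auxiliary evaluations. There is no serious obstacle here: the argument is essentially a one-line computation once the right identity for $\delta_k \mathbf{H}_k v$ is written down, and the bound $2\kappa_g + L = \kappa_H - 1$ falls out exactly from the three contributions.
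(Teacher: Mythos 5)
Your argument is correct and is essentially the same as the paper's: the paper simply observes that $\nabla \mathrm{m}_k(x_k+d) = \mathbf{g}_k + \mathbf{H}_k d$ and $\nabla \mathrm{m}_k(x_k) = \mathbf{g}_k$ and then invokes \cite[Lemma 1]{VKPS2017} with $L_{i_k} \leq L$, and that cited lemma carries out exactly the three-term decomposition and triangle-inequality computation you wrote out explicitly. In effect you have reproduced, inline, the proof that the paper delegates to its reference.
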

\begin{proof}
Let $k \in \mathbb{N}$, $i_{k} \in \imin{(x_{k})}$ be the chosen index associated with $\mathrm{m}_{k}$ in this iteration and $d \in \mathbb{R}^{n}$ an arbitrary direction satisfying $\norm{d} = \delta_{k}$. Initially, note that, $\nabla \mathrm{m}_{k}(x_{k} + d) = \mathbf{g}_{k} + \mathbf{H}_{k}d$, and $\nabla \mathrm{m}_{k}(x_{k}) = \mathbf{g}_{k}$. The results follow by \cite[Lemma 1]{VKPS2017} by observing that $L_{i_{k}} \leq L$.
\end{proof}

Analogously to \cite{CKPRS2013, VKPS2017}, let us consider the following sets of indexes
\begin{equation}\label{eq_def_conjunto_sucesso}
    \mathcal{S} = \{k \in \mathbb{N} \text{ } | \text{ } \rho_{k} \geq \eta \} \text{ and } \overline{\mathcal{S}} = \{k \in \mathbb{N} \text{ } | \text{ } \rho_{k} \geq \eta_{1} \},
\end{equation}
and note that $\overline{\mathcal{S}} \subset \mathcal{S}$. The next lemma establishes that if the trust-region radius is sufficiently small, then the iteration will be successful.

\begin{lemma}\label{lema_Delta_sucesso}
Suppose that Assumptions \ref{hip_f_diferenciavel},  \ref{hip_grad_limitante} and \ref{hip_d_decrescimo} hold. Consider the set 
\begin{equation}\label{lema_Delta_sucesso_eq_01}
    \mathcal{K} = \bigg\{k \in \mathbb{N}: \Delta_{k} \leq \min \bigg\{\frac{\pi_{k}}{\kappa_{H}}, \frac{(1 - \eta_{1})\pi_{k}}{c_{1}}, \beta \pi_{k}, 1 \bigg\} \bigg\},
\end{equation}
where $c_{1} := \big(L + \kappa_{g} + \kappa_{H} / 2 \big) / \theta$. If $k \in \mathcal{K}$, then $k \in \overline{\mathcal{S}}$.
\end{lemma}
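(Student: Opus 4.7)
The plan is to show that when $\Delta_k$ is sufficiently small (as captured by $k \in \mathcal{K}$), the quadratic model $\mathrm{m}_k$ is accurate enough on $\overline{B}(x_k, \Delta_k)$ that the actual reduction is a good fraction of the predicted reduction, forcing $\rho_k \geq \eta_1$.

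First, I would set the stage: the condition $\Delta_k \leq \beta\pi_k$ in the definition of $\mathcal{K}$, combined with the invariant $\delta_k \leq \Delta_k$ maintained by the algorithm, implies $\delta_k \leq \beta\pi_k$, so iteration $k$ does not enter the criticality phase (line~\ref{alg:criticality}) and a genuine step $d_k$ together with a ratio $\rho_k$ is computed. Note also that $\pi_k>0$, since $\Delta_k>0$ and $\Delta_k\leq \pi_k/\kappa_H$.

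Next I would obtain a lower bound on the predicted reduction. By Lemma~\ref{lema_hessiana_limitada} we have $1+\norm{\mathbf{H}_k}\leq \kappa_H$, and the assumptions $\Delta_k\leq \pi_k/\kappa_H$ and $\Delta_k\leq 1$ in~\eqref{lema_Delta_sucesso_eq_01} force the minimum inside Assumption~\ref{hip_d_decrescimo} to be attained by $\Delta_k$. Hence
\begin{equation*}
\mathrm{m}_k(x_k)-\mathrm{m}_k(x_k+d_k)\geq \theta\pi_k\Delta_k.
\end{equation*}

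The central step is to bound the model error $|\mathrm{m}_k(x_k+d_k)-f_{i_k}(x_k+d_k)|$ by a multiple of $\Delta_k^2$. I would use $\mathrm{m}_k(x_k)=f_{i_k}(x_k)$ (the natural convention when constructing an interpolation model at $x_k$) together with standard Taylor residual manipulations: the identity
\begin{equation*}
\mathrm{m}_k(x_k+d_k)-f_{i_k}(x_k+d_k)=(\mathbf{g}_k-\nabla f_{i_k}(x_k))^Td_k-\int_0^1(\nabla f_{i_k}(x_k+td_k)-\nabla f_{i_k}(x_k))^Td_k\,dt+\tfrac{1}{2}d_k^T\mathbf{H}_kd_k.
\end{equation*}
Taking norms and invoking Assumption~\ref{hip_grad_limitante} at $x_k$ ($\kappa_g\delta_k$), the Lipschitz constant $L$ from Assumption~\ref{hip_f_diferenciavel} on the integral term, and the Hessian bound $\kappa_H-1$ from Lemma~\ref{lema_hessiana_limitada}, and then absorbing $\delta_k\leq \Delta_k$ and $\norm{d_k}\leq \Delta_k$, produces a bound of the form $(L+\kappa_g+\kappa_H/2)\Delta_k^2=c_1\theta\Delta_k^2$. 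The main obstacle is making sure the constants line up exactly with the definition of $c_1$; I expect to need the slightly loose estimate $\kappa_H-1\leq \kappa_H$ to get the factor $\kappa_H/2$ cleanly.

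Finally, I would combine both estimates with the key LOVO observation that $f_{\min}(x_k+d_k)\leq f_{i_k}(x_k+d_k)$ while $f_{\min}(x_k)=f_{i_k}(x_k)$, which gives the chain
\begin{equation*}
1-\rho_k\leq \frac{\mathrm{m}_k(x_k+d_k)-f_{i_k}(x_k+d_k)}{\mathrm{m}_k(x_k)-\mathrm{m}_k(x_k+d_k)}\leq \frac{c_1\theta\Delta_k^2}{\theta\pi_k\Delta_k}=\frac{c_1\Delta_k}{\pi_k}\leq 1-\eta_1,
\end{equation*}
where the last inequality uses $\Delta_k\leq (1-\eta_1)\pi_k/c_1$. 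Thus $\rho_k\geq \eta_1$ and $k\in \overline{\mathcal{S}}$, concluding the proof.
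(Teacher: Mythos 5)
Your overall strategy --- an $\mathcal{O}(\Delta_{k}^{2})$ model-error bound, the LOVO observation that $\fmin{(x_{k})} = f_{i_{k}}(x_{k})$ while $\fmin{(x_{k}+d_{k})} \leq f_{i_{k}}(x_{k}+d_{k})$, and the final chain forcing $1-\rho_{k} \leq 1-\eta_{1}$ --- is exactly the paper's (which follows \cite[Lemma 3.1]{CKPRS2013}), and your explicit check that $k \in \mathcal{K}$ excludes the criticality phase (line \ref{alg:criticality}) via the invariant $\delta_{k} \leq \Delta_{k}$ is a point the paper leaves implicit. There is, however, one genuine gap: your central decomposition hinges on $\mathrm{m}_{k}(x_{k}) = f_{i_{k}}(x_{k})$, which you call ``the natural convention'', but which is \emph{not} granted by the hypotheses of the lemma. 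Assumption \ref{hip_grad_limitante} constrains only $\nabla \mathrm{m}_{k}$; the constant term $b_{k}$ in \eqref{eq_def_modelo} is left completely free, and the paper explicitly allows inexact interpolation models (Section \ref{sec:complexity} works with $\abs{\mathrm{m}(y^{j}) - f(y^{j})} \leq \kappa\delta^{2}$, so the model need not match $f_{i_{k}}$ even at $x_{k}$). With $b_{k}$ arbitrary, the pointwise error $\mathrm{m}_{k}(x_{k}+d_{k}) - f_{i_{k}}(x_{k}+d_{k})$ that you bound can be arbitrarily large, so your key estimate fails under the stated assumptions.

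The repair is cheap, and it is what the paper does: the quantity that actually enters $1-\rho_{k}$ is not a pointwise error but the difference of reductions
\begin{equation*}
\bigl(\mathrm{m}_{k}(x_{k})-\mathrm{m}_{k}(x_{k}+d_{k})\bigr)-\bigl(f_{i_{k}}(x_{k})-f_{i_{k}}(x_{k}+d_{k})\bigr)
= -\mathbf{g}_{k}^{T}d_{k} - \tfrac{1}{2} d_{k}^{T}\mathbf{H}_{k}d_{k} + f_{i_{k}}(x_{k}+d_{k}) - f_{i_{k}}(x_{k}),
\end{equation*}
in which $b_{k}$ cancels identically. Applying the Mean Value Theorem to $f_{i_{k}}$ (or your integral remainder) and splitting $\nabla f_{i_{k}}(x_{k}+\xi_{k}d_{k})^{T}d_{k} - \mathbf{g}_{k}^{T}d_{k}$ into $(\nabla f_{i_{k}}(x_{k}+\xi_{k}d_{k})-\nabla f_{i_{k}}(x_{k}))^{T}d_{k} + (\nabla f_{i_{k}}(x_{k})-\mathbf{g}_{k})^{T}d_{k}$ recovers precisely your three terms, with constants $L$, $\kappa_{g}$ and $(\kappa_{H}-1)/2 \leq \kappa_{H}/2$, and the rest of your argument then goes through verbatim. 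Two smaller remarks: first, there is a sign slip in your final chain --- with your convention the bound on $1-\rho_{k}$ involves $f_{i_{k}}(x_{k}+d_{k}) - \mathrm{m}_{k}(x_{k}+d_{k})$, the negative of what you wrote; this is harmless once the numerator is replaced by its absolute value, as in \eqref{lema_Delta_sucesso_eq_02}. Second, your observation that the minimum in Assumption \ref{hip_d_decrescimo} is attained at $\Delta_{k}$ (using Lemma \ref{lema_hessiana_limitada} and $\Delta_{k} \leq \min\{\pi_{k}/\kappa_{H}, 1\}$) is correct and matches how the paper resolves the denominator at the end of its proof.
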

\begin{proof}
Let $k \in \mathcal{K}$ and consider $i_{k} \in \imin{(x_{k})}$ the index associated with the model $\mathrm{m}_{k}$ in this iteration. By the Mean Value Theorem, there exists $\xi_{k} \in (0, 1)$ such that
\begin{equation*}
    f_{i_{k}}(x_{k} + d_{k}) = f_{i_{k}}(x_{k}) + \nabla f_{i_{k}}(x_{k} + \xi_{k}d_{k})^{T}d_{k}.
\end{equation*}
Therefore, by Assumptions \ref{hip_f_diferenciavel}, \ref{hip_grad_limitante} and Lemma \ref{lema_hessiana_limitada},
by following the same arguments as \cite[Lemma 3.1]{CKPRS2013}, we obtain
\begin{equation}\label{lema_Delta_sucesso_eq_02}
    \abs{\mathrm{m}_{k}(x_{k}) - \mathrm{m}_{k}(x_{k} + d_{k}) + f_{i_{k}}(x_{k} + d_{k}) - f_{i_{k}}(x_{k})} \leq \left(L + \kappa_{g} + \frac{1}{2}\kappa_{H} \right) \Delta_{k}^{2}
\end{equation}

By \eqref{lema_Delta_sucesso_eq_01} we have that $\pi_{k} > 0$.
Thus, the sufficient decrease condition of Assumption \ref{hip_d_decrescimo} implies
$
    \mathrm{m}_{k}(x_{k}) - \mathrm{m}_{k}(x_{k} + d_{k}) > 0
$.
The key point in this proof is to observe that, since $i_{k} \in \imin{(x_{k})}$, we know that $f_{i_{k}}(x_{k}) = \fmin{(x_{k})}$ and $\fmin{(x_{k} + d_{k})} \leq f_{i_{k}}(x_{k} + d_{k})$. By (\ref{eq_def_coef_reducao_relativa}), (\ref{lema_Delta_sucesso_eq_02}), Assumption \ref{hip_d_decrescimo} and Lemma \ref{lema_hessiana_limitada},
\begin{equation*}
    \begin{aligned}
        1 - \rho_{k} 
        &= 1 - \frac{\fmin{(x_{k})} - \fmin{(x_{k} + d_{k})}}{\mathrm{m}_{k}(x_{k}) - \mathrm{m}_{k}(x_{k} + d_{k})} \\
        &\leq 1 - \frac{f_{i_{k}}{(x_{k})} - f_{i_{k}}{(x_{k} + d_{k})}}{\mathrm{m}_{k}(x_{k}) - \mathrm{m}_{k}(x_{k} + d_{k})} \\
        &\leq \frac{\abs{\mathrm{m}_{k}(x_{k}) - \mathrm{m}_{k}(x_{k} + d_{k}) - f_{i_{k}}(x_{k}) + f_{i_{k}}(x_{k} + d_{k}) }}{\mathrm{m}_{k}(x_{k}) - \mathrm{m}_{k}(x_{k} + d_{k})} \\
        &\leq \frac{\big(L + \kappa_{g} + \frac{1}{2}\kappa_{H} \big)\Delta_{k}^{2}}{\theta\pi_{k}\min\Big\{ \frac{\pi_{k}}{\kappa_{H}}, \Delta_{k}, 1 \Big\}} \\
        &= \frac{c_{1}\Delta_{k}^{2}}{\pi_{k}\min\Big\{ \frac{\pi_{k}}{\kappa_{H}}, \Delta_{k}, 1 \Big\}},
    \end{aligned}
\end{equation*}
where $c_{1} = \left( L + \kappa_{g} + \kappa_{H} / 2 \right) / \theta$. The remaining of the proof follows exactly as~\cite[Lemma~3.1]{CKPRS2013}, adapting our constants.
\end{proof}

The following result is a direct consequence of Lemma \ref{lema_Delta_sucesso}. Assuming that $\pi_{k} \geq \varepsilon > 0$, we can prove that the trust-region radius is bounded below by $\Deltamin$, which depends on the parameters of the algorithm, the characteristics of problem (\ref{eq_lovo_sem_derivadas}) and the type of model $\mathrm{m}_{k}$ used.
\begin{corollary}\label{corolario_Delta_min}
Suppose that Assumptions \ref{hip_f_diferenciavel},  \ref{hip_grad_limitante} and \ref{hip_d_decrescimo} hold, and let $\varepsilon > 0$. If $\pi_{k} \geq \varepsilon$, then
\begin{equation*} 
    \Delta_{k} \geq \Deltamin := \min\left\{ \Delta_{0}, \tau_{1} \min \left\{\frac{\varepsilon}{\kappa_{H}}, \frac{(1 - \eta_{1})\varepsilon}{c_{1}}, \beta \varepsilon, 1 \right\} \right\}.
\end{equation*}
\end{corollary}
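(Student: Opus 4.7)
The plan is to prove the corollary by induction on $k$, implicitly assuming $\pi_{j} \geq \varepsilon$ at each iteration encountered (which is the natural reading needed for the induction to work through the whole run of the algorithm). The base case $k=0$ is immediate, since $\Deltamin \leq \Delta_{0}$ by definition.

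Before the inductive step, I would record two easy preliminary facts. First, every phase of Algorithm~\ref{alg_lowder} satisfies $\Delta_{k+1} \geq \tau_{1}\Delta_{k}$: the Criticality Phase selects $\Delta_{k+1} \in [\tau_{1}\Delta_{k}, \tau_{2}\Delta_{k}]$, the Radii Adjustment Phase gives $\Delta_{k+1} = \tau_{4}\Delta_{k} \geq \Delta_{k}$, and the Radii Update Phase yields $\Delta_{k+1} \in \{\tau_{1}\Delta_{k}, \Delta_{k}, \tau_{3}\Delta_{k}\}$. Second, the invariant $\delta_{k} \leq \Delta_{k}$ holds throughout, since $\delta_{0}\leq\Delta_{0}$ by hypothesis and, in each branch of the algorithm, $\delta_{k}$ is scaled by a factor no larger than the factor applied to $\Delta_{k}$ (the Criticality Phase scales $\delta$ exactly by $\tau_{1}$ while $\Delta$ is scaled by at least $\tau_{1}$; the other phases scale both by identical factors $\tau_{1}$, $\tau_{3}$, $\tau_{4}$, or $1$).

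For the inductive step, suppose $\Delta_{k} \geq \Deltamin$ and $\pi_{k} \geq \varepsilon$. I split into two cases according to the size of $\Delta_{k}$. If $\Delta_{k} \geq \Deltamin/\tau_{1}$, then $\Delta_{k+1} \geq \tau_{1}\Delta_{k} \geq \Deltamin$ in every branch, and we are done. If instead $\Deltamin \leq \Delta_{k} < \Deltamin/\tau_{1}$, then
\begin{equation*}
    \Delta_{k} < \min\left\{\frac{\varepsilon}{\kappa_{H}},\; \frac{(1-\eta_{1})\varepsilon}{c_{1}},\; \beta\varepsilon,\; 1\right\}.
\end{equation*}
Combining $\delta_{k} \leq \Delta_{k} < \beta\varepsilon \leq \beta\pi_{k}$, the test $\delta_{k} > \beta\pi_{k}$ fails, so the Criticality Phase does not trigger at iteration $k$. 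The remaining three bounds, together with $\pi_{k}\geq\varepsilon$, are exactly the hypotheses of Lemma~\ref{lema_Delta_sucesso}, so $k \in \overline{\mathcal{S}}$, i.e., $\rho_{k} \geq \eta_{1}$. In the Radii Adjustment Phase this gives $\Delta_{k+1} = \tau_{4}\Delta_{k} \geq \Delta_{k}$, and in the Radii Update Phase the condition $\rho_{k} \geq \eta_{1}$ forces either $\Delta_{k+1} = \tau_{3}\Delta_{k}$ or $\Delta_{k+1} = \Delta_{k}$; in every case $\Delta_{k+1} \geq \Delta_{k} \geq \Deltamin$.

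The main obstacle I anticipate is bookkeeping across the algorithm's several branches, in particular making sure that the $\Gamma$-counter in the Radii Adjustment Phase does not route the iteration into a branch that shrinks $\Delta_{k}$; fortunately that phase only increases both radii, so the critical observation is simply that any radius decrease must come from the Criticality Phase or from the $\rho_{k}<\eta_{1}$ sub-case of the Radii Update Phase, both of which are excluded in the second case above. Carefully recording the invariant $\delta_{k}\leq\Delta_{k}$ is the one subtlety that makes the exclusion of the Criticality Phase go through.
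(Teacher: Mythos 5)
Your proof is correct and takes essentially the approach the paper intends: the paper offers no explicit proof (calling the corollary a ``direct consequence'' of Lemma~\ref{lema_Delta_sucesso}), and your induction---small $\Delta_{k}$ forces $k \in \overline{\mathcal{S}}$ via Lemma~\ref{lema_Delta_sucesso}, and successful iterations never shrink the radius, while any shrink is by at most $\tau_{1}$---is precisely the standard argument being invoked. Your two explicit refinements, reading the hypothesis as $\pi_{j} \geq \varepsilon$ for all $j \leq k$ (without which the per-iteration statement would be false) and establishing the invariant $\delta_{k} \leq \Delta_{k}$ to rule out the criticality phase so that Lemma~\ref{lema_Delta_sucesso} legitimately applies, make precise exactly the points the paper glosses over.
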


As mentioned earlier, the radius of the sample region $\delta_{k}$ controls the quality of the model. More specifically, Assumption \ref{hip_grad_limitante} tells us that the smaller the sample radius, the better the models represent $f_{i_{k}}$. Therefore, it is reasonable to expect $\delta_{k}$ to go to zero. The following two lemmas show us that this happens. In the remainder of this section, unless otherwise stated, we will assume that Assumptions~\ref{hip_f_diferenciavel}, \ref{hip_f_limitada}, \ref{hip_grad_limitante} and~\ref{hip_d_decrescimo} hold.

\begin{lemma}\label{lema_infinitos_sucessos}
If $\abs{\overline{\mathcal{S}}} = \infty$, then the sequence $\{\delta_{k}\}_{k \in \overline{\mathcal{S}}}$ converges to zero.
\end{lemma}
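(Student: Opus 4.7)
The plan is to argue by contradiction. Suppose that $\{\delta_k\}_{k \in \overline{\mathcal{S}}}$ does not converge to zero. Then there exists $\varepsilon' > 0$ and an infinite subset $\mathcal{K}' \subset \overline{\mathcal{S}}$ such that $\delta_k \geq \varepsilon'$ for every $k \in \mathcal{K}'$. Since each $k \in \overline{\mathcal{S}}$ satisfies $\rho_k \geq \eta_1 > 0$, the criticality test at Line~\ref{alg:criticality} must have failed at that iteration, so $\delta_k \leq \beta \pi_k$. In particular, for every $k \in \mathcal{K}'$ we obtain $\pi_k \geq \delta_k / \beta \geq \varepsilon' / \beta =: \varepsilon$.

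Next, I would apply Corollary~\ref{corolario_Delta_min} to this uniform lower bound on $\pi_k$, which yields $\Delta_k \geq \Delta_{\min}$ for every $k \in \mathcal{K}'$, where $\Delta_{\min} = \Delta_{\min}(\varepsilon) > 0$. Combining this with Lemma~\ref{lema_hessiana_limitada} (so that $1 + \|\mathbf{H}_k\| \leq \kappa_H$) and Assumption~\ref{hip_d_decrescimo}, I can bound the predicted reduction from below by a positive constant independent of $k$:
\begin{equation*}
\mathrm{m}_k(x_k) - \mathrm{m}_k(x_k + d_k) \geq \theta \varepsilon \min\left\{ \frac{\varepsilon}{\kappa_H}, \Delta_{\min}, 1 \right\} =: C > 0.
\end{equation*}

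Then, since $k \in \overline{\mathcal{S}}$ implies $\rho_k \geq \eta_1$, the actual reduction satisfies $\fmin(x_k) - \fmin(x_{k+1}) \geq \eta_1 C$ for every $k \in \mathcal{K}'$. Observing that $\{\fmin(x_k)\}$ is non-increasing (either $x_{k+1} = x_k$, or $\rho_k \geq \eta > 0$ together with a positive predicted reduction guarantees a decrease) and bounded below by the constant $M$ from Assumption~\ref{hip_f_limitada}, a telescoping sum yields
\begin{equation*}
\sum_{k \in \mathcal{K}'} \eta_1 C \leq \sum_{k = 0}^{\infty} (\fmin(x_k) - \fmin(x_{k+1})) \leq \fmin(x_0) - M < \infty,
\end{equation*}
which contradicts $|\mathcal{K}'| = \infty$. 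Hence $\{\delta_k\}_{k \in \overline{\mathcal{S}}} \to 0$. The main obstacle is ensuring a uniform positive lower bound on the predicted reduction along $\mathcal{K}'$; this hinges on being able to transfer the lower bound on $\delta_k$ into a lower bound on $\pi_k$ (via the fact that criticality was not triggered) and then into a lower bound on $\Delta_k$ via Corollary~\ref{corolario_Delta_min}. Note that the radii adjustment branch (Line~\ref{alg:adjustment_increase}) does not affect the argument since it only increases $\delta_k$ and $\Delta_k$.
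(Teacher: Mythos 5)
Your overall strategy is the right one, and apart from one step it is essentially the paper's own argument (which defers to \cite[Lemma 3]{VKPS2017}) written in contrapositive form: on any iteration in $\ssuc$ the criticality test at line~\ref{alg:criticality} has failed (since $\rho_{k} \geq \eta_{1} > 0$ is impossible there), so $\pi_{k} \geq \delta_{k}/\beta$; Assumption~\ref{hip_d_decrescimo} then forces a uniform actual decrease of $\fmin{}$ on those iterations, and monotonicity of $\{\fmin{(x_{k})}\}$ together with the lower bound $M$ from Assumption~\ref{hip_f_limitada} makes infinitely many such decreases impossible.

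There is, however, a genuine gap in the middle: the application of Corollary~\ref{corolario_Delta_min} to conclude $\Delta_{k} \geq \Deltamin$ for $k \in \mathcal{K}'$. That corollary cannot be invoked pointwise from the single inequality $\pi_{k} \geq \varepsilon$: its justification (through Lemma~\ref{lema_Delta_sucesso}) is that the trust-region radius is only decreased at criticality or non-successful iterations, and such iterations cannot push the radius below the stated threshold \emph{provided} $\pi_{j}$ stays bounded away from zero at those iterations. In your setting the bound $\pi_{j} \geq \varepsilon$ is available only on the sparse subset $\mathcal{K}' \subset \ssuc$; between two consecutive elements of $\mathcal{K}'$ the algorithm may perform arbitrarily many criticality or unsuccessful iterations at which $\pi_{j}$ is small, each multiplying $\Delta$ by a factor at most $\tau_{2} < 1$, so $\Delta_{k}$ can fall below any prescribed $\Deltamin$ before the next element of $\mathcal{K}'$. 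This is why the paper only ever applies the corollary under hypotheses of the form ``$\pi_{j} \geq \varepsilon$ for all $j \geq k_{0}$'' or for all $j$ in a contiguous block of iterations. Fortunately the repair is immediate and makes the corollary unnecessary: the algorithm preserves the invariant $\delta_{k} \leq \Delta_{k}$ for all $k$ (it holds at $k = 0$, and every branch multiplies $\delta$ by a factor no larger than the factor applied to $\Delta$), so for $k \in \mathcal{K}'$ you get directly $\Delta_{k} \geq \delta_{k} \geq \varepsilon'$ and may take $C = \theta \varepsilon \min\left\{ \varepsilon/\kappa_{H}, \varepsilon', 1 \right\} > 0$; this is exactly the estimate underlying the paper's proof. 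One further cosmetic point: the lemma does not assume $\eta > 0$, but your monotonicity claim survives with $\eta = 0$, since accepted steps still have $\rho_{k} \geq 0$ and positive predicted reduction.
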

\begin{proof}
The proof follows the same arguments as \cite[Lemma 3]{VKPS2017}. Note that $\{ \fmin{(x_{k})} \}_{k \in \mathbb{N}}$ is a monotone nonincreasing sequence and bounded below, and the functions $f_{i}$, $i = 1, \dots, r$, are bounded below in $\Omega$.
\end{proof}

\begin{lemma}\label{lema_delta_converge_zero}
The sequence $\{\delta_{k}\}_{k \in \mathbb{N}}$ converges to zero.
\end{lemma}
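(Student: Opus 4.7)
The plan is to split into two cases depending on whether $|\overline{\mathcal{S}}| = \infty$ or $|\overline{\mathcal{S}}|$ is finite, and in each case exploit the bookkeeping of the counter $\Gamma$.

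In the infinite case, Lemma \ref{lema_infinitos_sucessos} already gives $\delta_k \to 0$ along the subsequence $k \in \overline{\mathcal{S}}$, so it suffices to bound $\delta_k$ on the window $[k_1, k_2)$ between two consecutive indices $k_1 < k_2$ of $\overline{\mathcal{S}}$ by a uniform multiple of $\delta_{k_1}$. Inspecting the algorithm, the only transitions that grow $\delta_k$ are the $\tau_3$ branch of the Radii Update Phase (which needs $\rho_k > \eta_2 \geq \eta_1$, hence cannot fire at any iteration in $(k_1, k_2)$) and the Radii Adjustment Phase (which multiplies $\delta_k$ by $\tau_4$). Since $\Gamma$ is reset to $0$ at $k_1$ and strictly increases by one on each entry to the Radii Adjustment Phase, the guard $\Gamma \leq \Gamma_{\max}$ caps the number of such entries in $[k_1, k_2)$ by $\Gamma_{\max} + 1$. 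All remaining branches (criticality, unsuccessful, or Radii Update with $\rho_k < \eta_1$) either keep $\delta_k$ fixed or shrink it, yielding $\delta_k \leq \tau_3 \tau_4^{\Gamma_{\max}+1} \delta_{k_1}$ for every $k \in [k_1, k_2)$. Letting $k_1 \to \infty$ along $\overline{\mathcal{S}}$ finishes the case.

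In the finite case, set $k_0 := \max \overline{\mathcal{S}} + 1$. For every $k \geq k_0$, whenever $\rho_k$ is defined we have $\rho_k < \eta_1$, so the reset $\Gamma = 0$ is never triggered. Consequently, the Radii Adjustment Phase is entered at most $\Gamma_{\max} + 1$ more times before the guard $\Gamma \leq \Gamma_{\max}$ fails permanently. Past this finite number of iterations, the only branches the algorithm can take are the criticality phase and the Radii Update Phase with $\rho_k < \eta_1$ (the $\tau_3$ branch is incompatible with $\rho_k < \eta_1 \leq \eta_2$ and the ``otherwise'' branch requires $\rho_k \geq \eta_1$); both assign $\delta_{k+1} = \tau_1 \delta_k$. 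Thus $\delta_k$ decays geometrically to zero.

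The main obstacle is the careful case analysis on $\Gamma$ and on which branches of the Radii Update Phase can fire under the hypothesis $\rho_k < \eta_1$, so as to rule out any branch that could freeze $\delta_k$ at a strictly positive value. Once this bookkeeping is established, the conclusion follows immediately from Lemma \ref{lema_infinitos_sucessos} in the infinite case and from geometric decay in the finite case.
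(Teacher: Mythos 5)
Your proof is correct and takes essentially the same route as the paper's: split on whether $\overline{\mathcal{S}}$ is finite or infinite, obtain geometric $\tau_{1}$-decay in the finite case once the counter $\Gamma$ exhausts the guard $\Gamma \leq \Gamma_{\max}$, and in the infinite case bound $\delta_{k}$ on the window after the last successful iteration $l_{k} \in \overline{\mathcal{S}}$ by a uniform multiple of $\delta_{l_{k}}$ and invoke Lemma~\ref{lema_infinitos_sucessos}. Your bookkeeping of the constant ($\tau_{3}\tau_{4}^{\Gamma_{\max}+1}$ versus the paper's $\tau_{4}^{\Gamma_{\max}}$) is in fact slightly more careful, but the argument is the same.
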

\begin{proof}
The proof is based on \cite[Lemma 3]{VKPS2017}, with the main difference that we are using \(\Gamma_{\max}\). We will split the demonstration into two cases.

\begin{enumerate}[label=\roman*)]
    \item $\overline{\mathcal{S}}$ is finite.
\end{enumerate}

Initially consider that the set $\overline{\mathcal{S}}$ is finite. Hence, there is a finite number of iterations for which the radii adjustment phase (line \ref{alg:adjustment}) is called. In fact, such phase is called for at most $\Gamma_{\max}\abs{\overline{\mathcal{S}}}$ iterations. 
Thus, there is $k_{0} \in \mathbb{N}$ such that for every $k \geq k_{0}$, the iteration $k \notin \overline{\mathcal{S}}$ and the radii adjustment phase (line \ref{alg:adjustment}) is not called. Let $\mathcal{M}_{1} := \{k \in \mathbb{N}\text{ }|\text{ } k \geq k_{0}\}$. Note that, by the radii update phase (line \ref{alg:update}) present in the algorithm, we have that for every iteration $k \in \mathcal{M}_{1}$ $\delta_{k + 1} = \tau_{1}\delta_ {k}$, where $\tau_{1} < 1$. Therefore, given that $\delta_{k_{0}}$
%
is a constant, it follows that
\begin{equation*}
    \lim_{k \rightarrow \infty} \delta_{k} = \lim_{k \in \mathcal{M}_{1}} \delta_{k} = \lim_{k \rightarrow \infty} \tau_{1}^{k}\delta_{k_{0}} = 0.
\end{equation*}

\begin{enumerate}[resume*]
    \item $\overline{\mathcal{S}}$ is infinite.
\end{enumerate}

Now, assume that the set $\overline{\mathcal{S}}$ is infinite and let $\mathcal{M}_{2}:= \{ k \in \mathbb{N}\text{ }|\text{ } k \notin \overline{\mathcal{S}} \}$. If the set $\mathcal{M}_{2}$ is finite, then by Lemma \ref{lema_infinitos_sucessos} it follows that
\begin{equation*}
    \lim_{k \to \infty} \delta_{k} = \lim_{k \in \overline{\mathcal{S}}} \delta_{k} = 0.
\end{equation*}

This leaves only the case where $\mathcal{M}_{2}$ is infinite. Note that, by Lemma \ref{lema_infinitos_sucessos}, we obtain the desired limit for indices in $\overline{\mathcal{S}}$. We will extend this result to the entire sequence. For this purpose, let $k \in \mathcal{M}_{2}$ and $l_{k}$ be the last iteration in $\ssuc$ before $k$. Since between the iterations $l_{k}$ and $k$ the radii adjustment phase (line \ref{alg:adjustment}) is called at most $\Gamma_{\max}$ times and $\tau_{4} \geq \tau_{3} \geq 1$, we have that the value of $\delta_{k}$ is at most $\tau_{4}^{\Gamma_{\max}}\delta_{l_{k}}$. Therefore, 
\begin{equation*}
    \lim_{k \in \mathcal{M}_{2}} \delta_{k} \leq \lim_{k \in \mathcal{M}_{2}} \tau_{4}^{\Gamma_{\max}}\delta_{l_{k}} = \tau_{4}^{\Gamma_{\max}}\lim_{l_{k} \in \overline{\mathcal{S}}} \delta_{l_{k}} = 0,
\end{equation*}
and we conclude the proof.
\end{proof}

Next, we show a weak convergence result for the problem of minimizing the model $\mathrm{m}_{k}$ in the feasible region $\Omega$.

\begin{lemma}\label{lema_pi_sub_converge_zero}
The sequence $\{\pi_{k}\}_{k \in \mathbb{N}}$ admits a subsequence that converges to zero. 
\end{lemma}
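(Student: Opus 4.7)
The plan is to argue by contradiction: suppose $\liminf_{k \to \infty} \pi_k > 0$, so that there exist $\varepsilon > 0$ and $k_0 \in \mathbb{N}$ with $\pi_k \geq \varepsilon$ for every $k \geq k_0$. The strategy is to contradict Lemma~\ref{lema_delta_converge_zero}, which gives $\delta_k \to 0$.

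The first step is to apply Corollary~\ref{corolario_Delta_min} directly, obtaining $\Delta_k \geq \Deltamin > 0$ for every $k \geq k_0$ (with $\Deltamin$ depending on this fixed $\varepsilon$). The second step uses $\delta_k \to 0$: there exists $k_1 \geq k_0$ such that $\delta_k < \beta\varepsilon \leq \beta\pi_k$ for every $k \geq k_1$, so the test in line~\ref{alg:criticality} fails and the criticality phase is never entered from $k_1$ onwards. Hence every iteration $k \geq k_1$ proceeds through either the radii adjustment phase or the radii update phase.

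The key observation is that outside the criticality phase, $\delta_{k+1}$ and $\Delta_{k+1}$ are obtained from $\delta_k$ and $\Delta_k$ by multiplication by the same positive scalar: $\tau_4$ in the adjustment phase, and $\tau_1$, $\tau_3$, or $1$ in the three mutually exclusive cases of the update phase, which are triggered by identical conditions for $\delta$ and $\Delta$. Therefore the ratio $\delta_k/\Delta_k$ is constant for $k \geq k_1$, equal to $r := \delta_{k_1}/\Delta_{k_1} > 0$, whence $\delta_k = r \Delta_k \geq r \Deltamin > 0$ for every such $k$. This contradicts $\delta_k \to 0$, so $\liminf_k \pi_k = 0$ and a subsequence of $\{\pi_k\}$ converges to zero.

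The only delicate point is verifying by direct inspection of Algorithm~\ref{alg_lowder} that the criticality phase is the sole mechanism able to decouple the multiplicative updates of $\delta_k$ and $\Delta_k$; once this coupling is pinned down, the lemma follows from combining Corollary~\ref{corolario_Delta_min} with Lemma~\ref{lema_delta_converge_zero}.
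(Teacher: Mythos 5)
Your proof is correct for Algorithm \ref{alg_lowder} exactly as stated, but it takes a genuinely different route from the paper's. Both arguments start from the same contradiction hypothesis ($\pi_k \geq \varepsilon$ for all $k \geq k_0$) and both invoke Corollary \ref{corolario_Delta_min} and Lemma \ref{lema_delta_converge_zero}; the paper, however, then repeats the sufficient-decrease telescoping argument: every iteration in $\ssuc$ beyond $k_0$ produces a decrease of $\fmin{}$ of at least $\eta_1\theta\varepsilon\min\{\varepsilon/\kappa_H, \Deltamin, 1\} > 0$, so by Assumption \ref{hip_f_limitada} there are only finitely many of them; hence the radii adjustment phase is eventually never called, the criticality phase is eventually never called (since $\delta_k \to 0$ while $\beta\pi_k \geq \beta\varepsilon$), and all remaining iterations satisfy $\Delta_{k+1} = \tau_1\Delta_k$, forcing $\Delta_k \to 0$ against Corollary \ref{corolario_Delta_min}. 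You replace all of this with the structural observation that every non-criticality iteration multiplies $\delta_k$ and $\Delta_k$ by the same factor ($\tau_4$ in the adjustment phase; $\tau_1$, $\tau_3$, or $1$ under identical conditions in the update phase), so once the criticality phase is permanently inactive the ratio $\delta_k/\Delta_k$ freezes and $\delta_k \geq (\delta_{k_1}/\Delta_{k_1})\Deltamin > 0$ contradicts $\delta_k \to 0$ directly. What your route buys is brevity and economy: no second appeal to the lower bound on $\fmin{}$ is needed (boundedness enters only implicitly, through Lemma \ref{lema_delta_converge_zero}). What the paper's route buys is robustness: it does not depend on the exact synchronization of the two radii, which the criticality phase already violates (there $\Delta_{k+1}$ may lie anywhere in $[\tau_1\Delta_k, \tau_2\Delta_k]$ while $\delta_{k+1} = \tau_1\delta_k$) and which natural variants of the algorithm would destroy --- for instance the reset $\Delta_{k+1} = \max(\tau_3\Delta_k, \Delta_0)$ in the adjustment phase discussed in the remark after Theorem \ref{teorema_pior_caso}, under which your invariant fails at every index swap, while the paper's proof goes through unchanged because it shows the adjustment phase is eventually silent. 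So your argument should be flagged as exploiting a coupling that is incidental to this precise statement of the algorithm rather than essential to the method.
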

\begin{proof}
We will show that
\begin{equation*}
    \liminf_{k \rightarrow \infty}\pi_{k} = 0.
\end{equation*}

In fact, suppose by contradiction that there are $\varepsilon > 0$ and an integer $k_{0} > 0$ such that $\pi_{k} \geq \varepsilon$ for all $k \geq k_{0}$. 
Let $k \in \overline{\mathcal{S}}$, with $k \geq k_{0}$ arbitrary. By the definition of $\rho_{k}$ given in (\ref{eq_def_coef_reducao_relativa}), Assumption \ref{hip_d_decrescimo}, the contradiction hypothesis and
Corollary~\ref{corolario_Delta_min}, it follows that 
\begin{equation*}
    \begin{aligned}
        \fmin{(x_{k})} - \fmin{(x_{k} + d_{k})}
        &\geq \rho_{k} \theta\pi_{k}\min\Big\{ \frac{\pi_{k}}{\kappa_{H}}, \Delta_{k}, 1 \Big\} \\
        &\geq \eta_{1}\theta\varepsilon\min\Big\{ \frac{\varepsilon}{\kappa_{H}}, \Deltamin, 1 \Big\}.
    \end{aligned}
\end{equation*}

By Assumption \ref{hip_f_limitada}, $\{\fmin{(x_{k})} \}_{k \in \mathbb{N}}$ is bounded below, and given that it is a monotone nonincreasing sequence, it follows that $\fmin{(x_{k})} - \fmin{(x_{k} + d_{k})} \rightarrow 0$. Since the right-hand side of the above inequality is a positive constant, the set $\{ k \in \overline{\mathcal{S}}: k \geq k_{0}\}$ is finite, and thus the radii adjustment phase (line \ref{alg:adjustment}) is also called only a finite number of iterations. However, by the Lemma \ref{lema_delta_converge_zero} we have that $\delta_{k} \rightarrow 0$, and given that $\pi_{k} \geq \varepsilon$ for all $k \geq k_{1}$, it follows that the criticality phase (line \ref{alg:criticality}) is called only for a finite number of iterations. Thus, for every sufficiently large $k \in \mathbb{N}$, we only have iterations with $\rho_{k} < \eta_{1}$ without the radii adjustment phase (line \ref{alg:adjustment}) being called, which implies that $\Delta_{k + 1 } = \tau_{1}\Delta_{k}$. Consequently, $\Delta_{k} \rightarrow 0$, contradicting
Corollary~\ref{corolario_Delta_min}.
\end{proof}

The following result is a direct consequence of Lemma \ref{lema_pi_sub_converge_zero}. 

\begin{corollary}\label{corolario_pi_sub_converge_zero}
There is an index $i \in \mathcal{I}$, that is selected an infinite number of iterations, such that
\begin{equation*}
    \liminf_{k \in \mathcal{J}(i)} \pi_{k} = 0,
\end{equation*}
where $\mathcal{J}(i) = \{k \in \mathbb{N}\text{ } | \text{ } i_{k} = i\}$.
\end{corollary}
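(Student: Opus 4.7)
The plan is to combine Lemma~\ref{lema_pi_sub_converge_zero} with a pigeonhole argument, exploiting the fact that the index set \(\mathcal{I} = \{1, \dots, r\}\) is finite.

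First, I would invoke Lemma~\ref{lema_pi_sub_converge_zero} to obtain a subsequence \(\{k_j\}_{j \in \mathbb{N}} \subset \mathbb{N}\) along which \(\pi_{k_j} \to 0\). At each such iteration \(k_j\), the algorithm has selected some index \(i_{k_j} \in \mathcal{I}\) that defines the model \(\mathrm{m}_{k_j}\). Since \(\mathcal{I}\) has only \(r\) elements while \(\{k_j\}\) is infinite, the pigeonhole principle guarantees the existence of at least one index \(i \in \mathcal{I}\) for which the set \(\{j \in \mathbb{N} : i_{k_j} = i\}\) is infinite.

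Next, I would extract the corresponding sub-subsequence \(\{k_{j_\ell}\}_{\ell \in \mathbb{N}}\) consisting of those \(j\) with \(i_{k_j} = i\). By construction, \(\{k_{j_\ell}\}_{\ell \in \mathbb{N}} \subset \mathcal{J}(i)\), so in particular \(\mathcal{J}(i)\) is infinite, meaning the index \(i\) is selected at infinitely many iterations. Moreover, since \(\{\pi_{k_{j_\ell}}\}_{\ell \in \mathbb{N}}\) is a subsequence of \(\{\pi_{k_j}\}_{j \in \mathbb{N}}\) which converges to zero, we have \(\pi_{k_{j_\ell}} \to 0\), and therefore
\begin{equation*}
    0 \leq \liminf_{k \in \mathcal{J}(i)} \pi_{k} \leq \lim_{\ell \to \infty} \pi_{k_{j_\ell}} = 0,
\end{equation*}
yielding the desired equality.

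There is really no substantive obstacle here: the corollary is a direct combinatorial consequence of the previous lemma together with the finiteness of \(\mathcal{I}\). The only point worth being explicit about is that the selected index \(i\) must be \emph{the same} index along an entire subsequence (not merely that some index is chosen infinitely often overall), which is exactly what the pigeonhole extraction delivers.
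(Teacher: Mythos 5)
Your proposal is correct and follows essentially the same route as the paper's own proof: invoke Lemma~\ref{lema_pi_sub_converge_zero} to get a subsequence with \(\pi_k \to 0\), apply the pigeonhole principle using \(\abs{\mathcal{I}} = r < \infty\) to find an index \(i\) chosen infinitely often along it, and conclude that the liminf over \(\mathcal{J}(i)\) is zero. Your write-up is simply a more explicit version of the paper's argument, spelling out the sub-subsequence extraction and the final liminf inequality.
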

\begin{proof}
  From Lemma \ref{lema_pi_sub_converge_zero}, we know that $\displaystyle\{ \pi_{k} \}_{k \in \mathbb{N}}$ admits a subsequence that converges to zero. Given that such a subsequence is infinite and that $\abs{\mathcal{I}} = r < \infty$, there
  must be an index $i \in \mathcal{I}$ that is chosen in an infinite number of iterations of that subsequence.
  Thus, collecting such iterations into the set $\mathcal{J}(i) = \{k \in \mathbb{N} \text{ }| \text{ }i_{k} = i\}$, we have that
$
    \liminf \limits_{k \in \mathcal{J}(i)} \pi_{k} = 0,
$
and we conclude the proof.
\end{proof}


If we, in addition, ask for a sufficient decrease condition $\eta > 0$ in Algorithm \ref{alg_lowder}, then we can show that $\{ \pi_{k} \}_{k \in \mathbb{N}}$ converges to zero for any subsequence when an index $i \in \mathcal{I}$ is selected an infinite number of iterations.

\begin{lemma}\label{lema_pi_converge_zero}
Suppose that $\eta > 0$ and let $i \in \mathcal{I}$ be an index chosen an infinite number of iterations. 
Then,
\begin{equation*}
    \lim_{k \in \mathcal{J}(i)} \pi_{k} = 0,
\end{equation*}
where $\mathcal{J}(i) = \{k \in  \mathbb{N} \text{ }|\text{ } i_{k} = i\}$.
\end{lemma}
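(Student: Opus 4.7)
The plan is to argue by contradiction. Suppose there exist $\epsilon > 0$ and an infinite subset $\mathcal{K} \subseteq \mathcal{J}(i)$ with $\pi_k \geq 2\epsilon$ for every $k \in \mathcal{K}$. The hypothesis $\eta > 0$ enters crucially: it converts the model decrease guaranteed by Assumption~\ref{hip_d_decrescimo} into a bona fide decrease of $\fmin$ bounded from below by a positive constant, which together with Assumption~\ref{hip_f_limitada} will produce the contradiction.

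First, I would observe that any $k \in \mathcal{S}$ with $\pi_k \geq \epsilon$ yields a decrease $\fmin(x_k) - \fmin(x_{k+1}) \geq \eta \theta \epsilon \min\{\epsilon/\kappa_H, \Deltamin, 1\} =: c > 0$, by Corollary~\ref{corolario_Delta_min}, Lemma~\ref{lema_hessiana_limitada}, Assumption~\ref{hip_d_decrescimo}, and $\rho_k \geq \eta$. Telescoping with Assumption~\ref{hip_f_limitada} forces $\{k \in \mathcal{S} : \pi_k \geq \epsilon\}$ to be finite. Analogously, only finitely many criticality-phase iterations can satisfy $\pi_k \geq \epsilon$, since those have $\delta_k > \beta \pi_k \geq \beta \epsilon$ while $\delta_k \to 0$ by Lemma~\ref{lema_delta_converge_zero}.

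Next, for each $k \in \mathcal{K}$ with $k$ large enough that $2\kappa_g \delta_k < \epsilon$, I would consider the \emph{streak} $k, k+1, \dots, k^{\star}$ in $\mathcal{J}(i)$ consisting of unsuccessful and criticality iterations followed by the first subsequent iteration $k^{\star}$ with $\rho_{k^{\star}} \geq \eta$. Inside the streak $x_{k+\ell} = x_k$ and $i_{k+\ell} = i$; Assumption~\ref{hip_grad_limitante} together with the nonexpansiveness of $P_\Omega$ gives $|\pi_{k+\ell} - \pi_k| \leq \kappa_g(\delta_{k+\ell} + \delta_k) \leq 2\kappa_g \delta_k < \epsilon$, so $\pi_{k+\ell} \geq \epsilon$ throughout. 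The radii adjustment phase cannot fire inside the streak (it requires $\rho \geq \eta$), so both $\delta_{k+\ell}$ and $\Delta_{k+\ell}$ contract by a factor of at most $\tau_2 < 1$ at each step; once $\delta_{k+\ell} < \beta \epsilon$ the iteration is no longer in the criticality phase, and once $\Delta_{k+\ell}$ falls below $\min\{\epsilon/\kappa_H, (1-\eta_1)\epsilon/c_1, \beta \epsilon, 1\}$ Lemma~\ref{lema_Delta_sucesso} forces $k+\ell \in \overline{\mathcal{S}} \subseteq \mathcal{S}$, ending the streak. Hence every streak is finite and its terminal index satisfies $k^{\star} \in \mathcal{J}(i) \cap \mathcal{S}$ with $\pi_{k^{\star}} \geq \epsilon$.

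Because $\mathcal{K}$ is infinite while each streak contains only finitely many elements of $\mathcal{K}$, infinitely many distinct streaks must occur, yielding infinitely many distinct $k^{\star} \in \mathcal{S}$ with $\pi_{k^{\star}} \geq \epsilon$ and contradicting the finiteness established in the first step. The main obstacle is the streak analysis: one must show simultaneously that $\pi_{k+\ell}$ stays bounded away from zero via the inexact model-gradient bound and that the geometrically shrinking $\Delta_{k+\ell}$ eventually satisfies the hypothesis of Lemma~\ref{lema_Delta_sucesso} \emph{outside} the criticality phase, so that the streak cannot be infinite.
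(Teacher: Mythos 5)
Your proof is correct, and it takes a genuinely different route from the paper's. The paper also argues by contradiction from an infinite set $\mathcal{M}_{1} = \mathcal{J}(i) \cap \{k : \pi_{k} \geq \varepsilon\}$, but it anchors the argument at $u_{k}$, the \emph{first iteration after $k$ with $\pi_{u_{k}} \leq \varepsilon/2$}, whose existence comes from the liminf result (Lemma~\ref{lema_pi_sub_converge_zero}); it then shows that the block $\{j : k \leq j < u_{k}\}$ must contain a successful iteration by a two-case analysis --- for $u_{k} \in \mathcal{J}(i)$ this is deferred to the argument of \cite[Lemma~5]{VKPS2017} (essentially your bound $\abs{\pi_{j} - \pi_{j}^{f}} \leq \kappa_{g}\delta_{j}$, used to show $\pi$ cannot drop by $\varepsilon/2$ while $x_{k}$ and $i_{k}$ stay fixed), while for $u_{k} \notin \mathcal{J}(i)$ an index swap occurred, which the algorithm permits only at successful iterations --- and finally it sums the sufficient decreases over that block to contradict the Cauchy property of the convergent monotone sequence $\{\fmin{(x_{k})}\}$. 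You instead anchor at $k^{\star}$, the \emph{first successful iteration after $k$}: its existence is not assumed but proved via the geometric contraction of both radii along the unsuccessful/criticality streak together with Lemma~\ref{lema_Delta_sucesso}, the persistence of $\pi \geq \epsilon$ along the streak comes from the same model-gradient error bound, and the contradiction is a counting one against the finiteness of $\{k \in \mathcal{S} : \pi_{k} \geq \epsilon\}$ obtained by telescoping. What your route buys is self-containedness: it needs neither Lemma~\ref{lema_pi_sub_converge_zero} nor the external reference, the LOVO-specific case analysis disappears because index swaps can only happen at successful iterations, which end streaks by definition, and you get the quantitative by-product that every sufficiently late iterate with $\pi_{k} \geq 2\epsilon$ is followed, before any step is accepted, by a successful iteration at which $\pi \geq \epsilon$. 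What the paper's route buys is brevity given its existing infrastructure, and it isolates cleanly the one place where the LOVO structure genuinely matters (the case $u_{k} \notin \mathcal{J}(i)$); your streak-termination argument is in fact closer in spirit to the paper's proof of Lemma~\ref{lema_pi_sub_converge_zero}, which likewise drives $\Delta_{k}$ to zero against Corollary~\ref{corolario_Delta_min}. One small remark: your second preliminary observation (that only finitely many criticality iterations have $\pi_{k} \geq \epsilon$) is never actually used, since criticality iterations inside a streak are already handled by the contraction argument.
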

\begin{proof}
  Initially, we know that $\mathcal{J}(i)$ exists by the same arguments of Corollary~\ref{corolario_pi_sub_converge_zero}.
  Suppose by contradiction that for some $\varepsilon > 0$ the set $\mathcal{M}_{1} = \mathcal{J}(i) \cap \{k \in \mathbb{N}: \pi_{k} \geq \varepsilon \}$ is infinite. Given $k \in \mathcal{M}_{1}$, consider $u_{k} \in \mathbb{N}$ the first iteration such that $u_{k} > k$ and $\pi_{u_{k}} \leq \varepsilon / 2$, and so $\pi_{k} - \pi_{u_{k}} \geq \varepsilon / 2$. Note that the existence of $u_{k}$ is guaranteed by Lemma \ref{lema_pi_sub_converge_zero}.
By Lemma~\ref{lema_delta_converge_zero}, there is
$k_{0} \in \mathbb{N}$ such that, for $k \geq k_{0}$, we have
$\delta_{k} \leq \varepsilon / (8 \kappa_{g})$, where $\kappa_{g}$ is
the constant given in Assumption \ref{hip_grad_limitante}. Also,
let us define
$
  \mathcal{C}_{k} = \{ j \in \mathcal{S}: k \leq j < u_{k} \}.
$
We will show that $\mathcal{C}_{k}$ is nonempty by considering two cases: $u_{k} \in \mathcal{J}(i)$ and $u_{k} \notin \mathcal{J}(i)$.


\begin{enumerate}[label=\roman*)]
    \item \label{lema_pi_converge_zero_i} $u_{k}\in \mathcal{J}(i)$.

Given that $u_{k} \in \mathcal{J}$, we have that $i_{k} = i_{u_{k}}$. In other words, the functions representing $\fmin$ at $k$ and $u_{k}$ are the same. In this case, the conclusion that $\mathcal{C}_{k}$ is nonempty follows from the same arguments of~\cite[Lemma~5]{VKPS2017}.

\item $u_{k}\notin \mathcal{J}(i)$. 

In this case, there is no guarantee that $\nabla f_{i_{k}}$ and $\nabla f_{i_{u_{k}}}$ are the same functions and, therefore, the argument used in item~\ref{lema_pi_converge_zero_i} is not valid. However, given that $u_{k} \notin \mathcal{J}(i)$, by the index choice condition in the step acceptance phase (line \ref{alg:acceptance}), we know that there was at least one index swap between iterations $k$ and $u_{k}$. Thus, the set $\mathcal{C}_{k}$ is nonempty.
\end{enumerate}
Therefore, in both cases we obtain that $\mathcal{C}_{k}$ is a nonempty set. Thus, by Assumption \ref{hip_d_decrescimo}, Lemma \ref{lema_hessiana_limitada}, Corollary \ref{corolario_Delta_min}, and the fact that $\pi_{j} \geq \varepsilon / 2$, for all $j \in \mathcal{C}_{k}$, we have that, for all $k \in \mathcal{M}_{1}$, $k \ge k_{0}$,

\begin{equation}\label{lema_pi_converge_zero_eq_08}
    \begin{aligned}
        \fmin{(x_{k})} - \fmin{(x_{u_{k}})}
        &\geq \sum_{j \in \mathcal{C}_{k}} \big(\fmin{(x_{j})} - \fmin{(x_{j+1})} \big) \\
        &\geq \sum_{j \in \mathcal{C}_{k}} \rho_{j} \theta \pi_{j} \min \Big\{\frac{\pi_{j}}{\kappa_{H}}, \Delta_{j}, 1 \Big\} \\
        &\geq \frac{\eta \theta \varepsilon}{2} \min \Bigg\{\frac{\varepsilon}{2\kappa_{H}}, \sum_{j \in \mathcal{C}_{k}} \Delta_{j}, 1 \Bigg\} \\
        &\geq \frac{\eta \theta \varepsilon}{2} \min \Bigg\{\frac{\varepsilon}{2\kappa_{H}}, \Deltamin, 1 \Bigg\}.
        \end{aligned}
\end{equation}

Thus, since $\eta > 0$, it follows that the right-hand side of (\ref{lema_pi_converge_zero_eq_08}) is a positive constant. On the other hand, by Assumption \ref{hip_f_limitada}, $\{\fmin{(x_{k})}\}_{k \in \mathbb{N}}$ is bounded below and, by the construction of the algorithm, is a monotone nonincreasing sequence. Hence, $\fmin{(x_{k})} - \fmin{(x_{u_{k}})} \rightarrow 0$, which is a contradiction with (\ref{lema_pi_converge_zero_eq_08}), and completes the proof.
\end{proof}

We are now ready to prove the global convergence results for Algorithm~\ref{alg_lowder}. In a way similar to the stationarity measure $\pi_{k}$ when solving subproblems~\eqref{eq_def_subproblema}, we adopt the criticality measure $\norm{ \mathcal{P}_{\Omega}(x - \nabla f(x)) - x }$, proposed by Conn, Gould and Toint \cite{CGT1988} for optimization problems involving a continuous function $f$ on a convex feasible set $\Omega$. This measure was used in \cite{CKPRS2013, CGST1996, VKPS2017}, for example, to establish global convergence results. The following theorem is the main result of this section. It allows us to further describe what kind of stationarity can be achieved by Algorithm~\ref{alg_lowder}.

\begin{theorem}\label{teorema_principal} 
  Let us define sets
  $\mathcal{J}(i) = \{k \in \mathbb{N} \mid i_{k} = i\}$, for
  $i \in \mathcal{I}$, and let
  $\{ x_{k} \}_{k \in \mathbb{N}}$ be a sequence generated by
  Algorithm \ref{alg_lowder}. The following statements hold.
\begin{enumerate}[label=\roman*)]
    \item\label{t:princ:ponto1} If $\eta = 0$, then
        \begin{equation*}
            \displaystyle\liminf_{k \to \infty} \norm{\mathcal{P}_{\Omega}(x_{k} - \nabla f_{i_{k}}(x_{k})) - x_{k}} = 0.
        \end{equation*}
        Furthermore, there is an index $i \in \mathcal{I}$
        such that
        \begin{equation*}
            \displaystyle\liminf_{k \in \mathcal{J}(i)} \norm{\mathcal{P}_{\Omega}(x_{k} - \nabla f_{i_{k}}(x_{k})) - x_{k}} = 0.
        \end{equation*}
    \item\label{t:princ:ponto2} If $\eta > 0$ and $i \in \mathcal{I}$ is any index chosen for an infinite number of iterations, then
        \begin{equation*}
            \displaystyle\lim_{k \in \mathcal{J}(i)} \norm{\mathcal{P}_{\Omega}(x_{k} - \nabla f_{i_{k}}(x_{k})) - x_{k}} = 0.
        \end{equation*}
      \item\label{t:princ:ponto3} If $i \in \mathcal{I}$ is an index chosen for an infinite number of iterations and $x_{*} \in \mathbb{R}^{n}$ is an accumulation point for the subsequence $\{ x_{k} \}_{k \in \mathcal{J}(i)} \subseteq \{ x_{k} \}_{k \in \mathbb{N}}$
        then $i \in \imin{(x_{*})}$.
\end{enumerate}
\end{theorem}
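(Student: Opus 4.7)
The plan is to bridge the model stationarity measure $\pi_{k}$, for which we already have strong information from Lemmas \ref{lema_pi_sub_converge_zero}, \ref{lema_pi_converge_zero} and Corollary \ref{corolario_pi_sub_converge_zero}, to the true criticality measure $\pi_{k}^{*} := \norm{\mathcal{P}_{\Omega}(x_{k} - \nabla f_{i_{k}}(x_{k})) - x_{k}}$. The key observation is that the orthogonal projection $\mathcal{P}_{\Omega}$ is nonexpansive, so
\begin{equation*}
  \abs{\pi_{k}^{*} - \pi_{k}} \leq \norm{\mathcal{P}_{\Omega}(x_{k} - \nabla f_{i_{k}}(x_{k})) - \mathcal{P}_{\Omega}(x_{k} - \mathbf{g}_{k})} \leq \norm{\nabla f_{i_{k}}(x_{k}) - \mathbf{g}_{k}} \leq \kappa_{g}\delta_{k},
\end{equation*}
where the last inequality is Assumption \ref{hip_grad_limitante} evaluated at $x = x_{k}$. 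Lemma \ref{lema_delta_converge_zero} already guarantees $\delta_{k} \to 0$, so any limit behaviour established for $\pi_{k}$ transfers to $\pi_{k}^{*}$.

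For item \ref{t:princ:ponto1}, I would apply Lemma \ref{lema_pi_sub_converge_zero} to extract a subsequence on which $\pi_{k} \to 0$; combining the displayed bound with $\delta_{k} \to 0$ yields $\pi_{k}^{*} \to 0$ along that subsequence, giving the first $\liminf$ claim. For the index-restricted statement, I would invoke Corollary \ref{corolario_pi_sub_converge_zero}, which provides $i \in \mathcal{I}$ with $\liminf_{k \in \mathcal{J}(i)} \pi_{k} = 0$, and the same projection/Lipschitz bound converts this into $\liminf_{k \in \mathcal{J}(i)} \pi_{k}^{*} = 0$. For item \ref{t:princ:ponto2}, the argument is identical but now uses Lemma \ref{lema_pi_converge_zero}, which upgrades the $\liminf$ to a full $\lim$ when $\eta > 0$: the inequality $\pi_{k}^{*} \leq \pi_{k} + \kappa_{g}\delta_{k}$ together with $\pi_{k} \to 0$ and $\delta_{k} \to 0$ along $\mathcal{J}(i)$ gives the conclusion.

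For item \ref{t:princ:ponto3}, I would argue purely by continuity, without using $\pi_{k}$. Since $i_{k} = i$ for every $k \in \mathcal{J}(i)$ and $i_{k} \in \imin(x_{k})$ by construction, we have $f_{i}(x_{k}) = \fmin(x_{k})$ for all $k \in \mathcal{J}(i)$. Now $\fmin$ is continuous as the pointwise minimum of finitely many continuous functions $f_{1}, \dots, f_{r}$, and each $f_{i}$ is continuous by Assumption \ref{hip_f_diferenciavel}. Passing to the limit along the subsequence $\{x_{k}\}_{k \in \mathcal{J}(i)}$ that converges to $x_{*}$, we obtain $f_{i}(x_{*}) = \fmin(x_{*})$, which is exactly $i \in \imin(x_{*})$.

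The calculations are all routine once the projection/Assumption \ref{hip_grad_limitante} inequality above is in hand; the main subtlety to check is that the bound on $\abs{\pi_{k}^{*} - \pi_{k}}$ is uniform in $k$ in the sense that it uses only the current iteration's data, so the transfer works without any restriction on how $i_{k}$ varies with $k$ in the unrestricted $\liminf$ of item \ref{t:princ:ponto1}. I do not anticipate a genuine obstacle, since the heavy lifting concerning $\pi_{k}$, $\delta_{k}$, $\Delta_{k}$ and the set $\ssuc$ has already been done in the preceding lemmas.
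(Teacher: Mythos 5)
Your proposal is correct and follows essentially the same route as the paper: both hinge on the bound $\norm{\mathcal{P}_{\Omega}(x_{k} - \nabla f_{i_{k}}(x_{k})) - x_{k}} \leq \pi_{k} + \kappa_{g}\delta_{k}$ obtained from nonexpansiveness of the projection and Assumption \ref{hip_grad_limitante}, then invoke Lemmas \ref{lema_delta_converge_zero}, \ref{lema_pi_sub_converge_zero}, \ref{lema_pi_converge_zero} and Corollary \ref{corolario_pi_sub_converge_zero} for items \ref{t:princ:ponto1} and \ref{t:princ:ponto2}, and a continuity/limit argument for item \ref{t:princ:ponto3}. Your nonexpansiveness derivation is in fact slightly cleaner than the paper's intermediate manipulation of the projections, but the substance is identical.
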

\begin{proof}
Initially, note that by the triangular inequality, the properties of the projection operator, and Assumption \ref{hip_grad_limitante}, we have
\begin{equation}\label{teorema_principal_eq_01}
    \begin{aligned}
        &\norm{P_{\Omega}(x_{k} - \nabla f_{i_{k}}(x_{k})) - x_{k}} \\
        &\hspace{1cm} = \norm{P_{\Omega}(x_{k} - \nabla f_{i_{k}}(x_{k})) - P_{\Omega}(x_{k} - \textbf{g}_{k}) + P_{\Omega}(x_{k} - \textbf{g}_{k}) - x_{k}} \\
        &\hspace{1cm} \leq \norm{P_{\Omega}(x_{k} - \nabla f_{i_{k}}(x_{k})) - P_{\Omega}(x_{k} - \textbf{g}_{k})} + \norm{P_{\Omega}(x_{k} - \textbf{g}_{k}) - x_{k}} \\
        &\hspace{1cm} = \norm{P_{\Omega}(x_{k} - \nabla f_{i_{k}}(x_{k}) - x_{k} + \textbf{g}_{k})} + \norm{P_{\Omega}(x_{k} - \textbf{g}_{k}) - x_{k}} \\
        &\hspace{1cm} = \norm{P_{\Omega}(\textbf{g}_{k} - \nabla f_{i_{k}}(x_{k}))} + \norm{P_{\Omega}(x_{k} - \textbf{g}_{k}) - x_{k}} \\
        &\hspace{1cm} \leq \norm{\textbf{g}_{k} - \nabla f_{i_{k}}(x_{k})} + \norm{P_{\Omega}(x_{k} - \textbf{g}_{k}) - x_{k}} \\
        &\hspace{1cm} \leq \kappa_{g}\delta_{k} + \pi_{k}.
    \end{aligned}
\end{equation}

Thus, using Lemmas~\ref{lema_delta_converge_zero} and \ref{lema_pi_sub_converge_zero}, Corollary \ref{corolario_pi_sub_converge_zero}, and~(\ref{teorema_principal_eq_01}), we obtain~\ref{t:princ:ponto1}. On the other hand, by Lemmas \ref{lema_delta_converge_zero} and \ref{lema_pi_converge_zero}, and expression (\ref{teorema_principal_eq_01}), we conclude the proof of~\ref{t:princ:ponto2}.

It still remains for us to prove~\ref{t:princ:ponto3}. Note that by the statement of the theorem we have that $\lim_{k \in \mathcal{J}(i)} x_{k} = x_{*}$, and by Assumption \ref{hip_f_diferenciavel}, the functions $f_{j}$, $j \in \mathcal{I}$, are continuously differentiable. So the continuity of $f_{j}$ gives us 
\begin{equation}\label{teorema_principal_eq_02} 
    \lim_{k \in \mathcal{J}(i)} f_{j}(x_{k}) = f_{j}(x_{*}),
\end{equation}
for any index $j \in \mathcal{I}$. Given that $i \in \imin{(x_{k})}$, for every $k \in \mathcal{J}(i)$, we have by the definition of this set that $f_{i}(x_{k}) \leq f_{j}(x_{k})$ for any index $j \in \mathcal{I}$ and $k \in \mathcal{J}(i)$. Thus, taking the limit in $\mathcal{J}(i)$, by (\ref{teorema_principal_eq_02}) it follows that 
$
    f_{i}(x_{*}) \leq f_{j}(x_{*}),
$
and therefore $i \in \imin{(x_{*})}$.
\end{proof}

Theorem \ref{teorema_principal} states that if $\eta > 0$ and $x_{*} \in \mathbb{R}^{n}$ is an accumulation point of a sequence $\{ x_{ k} \}_{k \in \mathbb{N}}$ generated by Algorithm \ref{alg_lowder}, it is possible to construct a subsequence $\{ x_{k} \}_{k \in \mathcal{J}(i) }$ which converges to $x_{*}$, where $\mathcal{J}(i) = \{k \in \mathbb{N} \text{ }|\text{ } i_{k} = i\}$, and $x^{*}$ satisfies a necessary optimality condition of gradient projected type for the problem
\begin{equation}\label{eq_definicao_prob_fi}
    \begin{array}{cc}
        \text{minimize} & f_{i}(x) \\
        \text{subject to} & x \in \Omega.
    \end{array}
\end{equation}
In other words, $x^{*}$ is a first-order stationary point for the problem (\ref{eq_definicao_prob_fi}) (see \cite{CGT1988} and \cite[p. 450]{CGT2000}).

The following definition and corollaries help us to understand Theorem \ref{teorema_principal} from the perspective of the LOVO theory~\cite{AMMY2009}. 

\begin{defn}\cite[p. 05]{AMMY2009}\label{definicao_criticidade_fraca_forte}
Given $x_{*} \in \Omega$ and established a necessary optimality condition for the problem (\ref{eq_definicao_prob_fi}), we say that
\begin{enumerate}[label=\roman*)]
    \item $x_{*}$ is strongly critical if it satisfies a necessary optimality condition for (\ref{eq_definicao_prob_fi}), for all $i \in \imin(x_*)$.
    \item $x_{*}$ is weakly critical if it satisfies a necessary optimality condition for
    (\ref{eq_definicao_prob_fi}), for some index $i \in \imin(x_{*})$.
\end{enumerate}
\end{defn}
 
\begin{corollary} \label{corollary_weak1}
Suppose that $\eta > 0$. If $x_{*} \in \mathbb{R}^{n}$ is an accumulation point of a sequence $\{x_{k}\}_{k \in \mathbb{N}}$ generated by Algorithm \ref{alg_lowder}, then $x_{*}$ is weakly critical. 
\end{corollary}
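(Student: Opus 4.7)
The plan is to combine parts \ref{t:princ:ponto2} and \ref{t:princ:ponto3} of Theorem~\ref{teorema_principal} after a pigeonhole reduction. Let $x_{*} \in \mathbb{R}^{n}$ be an accumulation point of $\{x_{k}\}_{k \in \mathbb{N}}$, so there exists an infinite set $\mathcal{K} \subseteq \mathbb{N}$ with $\lim_{k \in \mathcal{K}} x_{k} = x_{*}$. Since $|\mathcal{I}| = r < \infty$, the pigeonhole principle applied to the map $k \mapsto i_{k}$ on $\mathcal{K}$ yields some index $i \in \mathcal{I}$ such that $\mathcal{K}' := \{k \in \mathcal{K} : i_{k} = i\}$ is infinite. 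In particular, $i$ is chosen an infinite number of iterations, $\mathcal{K}' \subseteq \mathcal{J}(i)$, and $\lim_{k \in \mathcal{K}'} x_{k} = x_{*}$.

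With this $i$ in hand, Theorem~\ref{teorema_principal}\ref{t:princ:ponto3} applied to the subsequence $\{x_{k}\}_{k \in \mathcal{K}'}$ gives $i \in \imin{(x_{*})}$. Since $\eta > 0$ by assumption and $i$ is selected infinitely often, Theorem~\ref{teorema_principal}\ref{t:princ:ponto2} yields
\begin{equation*}
    \lim_{k \in \mathcal{J}(i)} \norm{P_{\Omega}(x_{k} - \nabla f_{i_{k}}(x_{k})) - x_{k}} = 0.
\end{equation*}
Restricting this limit to the subsequence $\mathcal{K}' \subseteq \mathcal{J}(i)$, and using that $i_{k} = i$ along $\mathcal{K}'$, the continuity of $\nabla f_{i}$ (Assumption~\ref{hip_f_diferenciavel}) together with the nonexpansiveness and hence continuity of $P_{\Omega}$ allow passage to the limit, giving $\norm{P_{\Omega}(x_{*} - \nabla f_{i}(x_{*})) - x_{*}} = 0$. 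Thus $x_{*}$ satisfies the projected-gradient necessary optimality condition for problem~(\ref{eq_definicao_prob_fi}) at this particular $i \in \imin{(x_{*})}$, which is precisely the definition of weak criticality in Definition~\ref{definicao_criticidade_fraca_forte}.

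The only genuinely delicate step is the pigeonhole extraction: the subsequence of iterates converging to $x_{*}$ need not lie in any single $\mathcal{J}(i)$ a priori, and Theorem~\ref{teorema_principal}\ref{t:princ:ponto2} provides a vanishing criticality measure only along the index-fixed sets $\mathcal{J}(i)$. The finiteness $|\mathcal{I}| < \infty$ bridges this gap by producing a common further subsequence on which both the convergence $x_{k} \to x_{*}$ and the vanishing of the stationarity measure hold simultaneously. All remaining manipulations are routine continuity arguments.
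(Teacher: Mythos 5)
Your proposal is correct and follows essentially the same route as the paper's own proof: extract, via pigeonhole over the finite index set $\mathcal{I}$, an index $i$ chosen infinitely often along the convergent subsequence, then invoke part~\ref{t:princ:ponto3} of Theorem~\ref{teorema_principal} to get $i \in \imin{(x_{*})}$ and part~\ref{t:princ:ponto2} for the vanishing stationarity measure. Your explicit continuity passage to the pointwise condition $\norm{P_{\Omega}(x_{*} - \nabla f_{i}(x_{*})) - x_{*}} = 0$ is a small (and welcome) sharpening of a step the paper leaves implicit, but it does not change the argument.
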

\begin{proof}
  Let $x_{*}$ be an accumulation point of the sequence $\{ x_{k} \}_{k \in \mathbb{N}}$ generated by Algorithm \ref{alg_lowder}. Thus, there is a subsequence $\{ x_{k} \}_{k \in \mathcal{N}} \subseteq \{ x_{k} \}_{k \in \mathbb{N}}$ that converges to $x_{*}$. If $i\in \mathcal{I}$ is an index that repeats for an infinite number of iterations in this subsequence then, as
  subsequence $\{ x_{k} \}_{k \in \mathcal{J}(i)} \subseteq \{ x_{k} \}_{k \in \mathcal{N}} \subseteq \{ x_{k} \}_{k \in \mathbb{N}}$ converges to $x_{*}$, by~\ref{t:princ:ponto3} of Theorem~\ref{teorema_principal} it follows that $i \in \imin{(x_{*})}$. Furthermore, from~\ref{t:princ:ponto2} of the same theorem, it follows that
\begin{equation*}
    \lim_{k \in \mathcal{J}(i)} \norm{\mathcal{P}_{\Omega}(x_{k} - \nabla f_ {i}(x_{k})) - x_{k}} = 0,
\end{equation*}
and thus $x_{*}$ satisfies a necessary optimality condition for problem (\ref{eq_definicao_prob_fi}). Therefore, $x_{*}$ is a weakly critical point.
\end{proof}

\begin{corollary} \label{corollary_weak2}
If Algorithm \ref{alg_lowder} generates a sequence $\{x_{k}\}_{k \in \mathbb{N}}$ that converges to $x_{*} \in \mathbb{R}^{n }$, then $x_{*}$ is weakly critical. 
\end{corollary}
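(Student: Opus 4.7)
The plan is to combine parts~\ref{t:princ:ponto1} and~\ref{t:princ:ponto3} of Theorem~\ref{teorema_principal}, exploiting the fact that every subsequence of a convergent sequence inherits the same limit. This avoids any hypothesis on $\eta$, and thus covers the statement as written (in particular, it is not a direct reduction to Corollary~\ref{corollary_weak1}, which required $\eta > 0$).

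First I would invoke part~\ref{t:princ:ponto1} of Theorem~\ref{teorema_principal}, which produces a set $\mathcal{N} \subseteq \mathbb{N}$ along which $\norm{\mathcal{P}_{\Omega}(x_{k} - \nabla f_{i_{k}}(x_{k})) - x_{k}} \to 0$. Since $|\mathcal{I}| = r < \infty$, a pigeonhole argument yields some $i \in \mathcal{I}$ selected at infinitely many $k \in \mathcal{N}$; set $\mathcal{N}' = \mathcal{N} \cap \mathcal{J}(i)$. Along $\mathcal{N}'$ the hypothesis $x_{k} \to x_{*}$ in particular gives $x_{k} \to x_{*}$ on this subsequence, together with $\norm{\mathcal{P}_{\Omega}(x_{k} - \nabla f_{i}(x_{k})) - x_{k}} \to 0$. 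Continuity of $\nabla f_{i}$ (Assumption~\ref{hip_f_diferenciavel}) combined with the nonexpansiveness of $\mathcal{P}_{\Omega}$ then yields $\norm{\mathcal{P}_{\Omega}(x_{*} - \nabla f_{i}(x_{*})) - x_{*}} = 0$, i.e., $x_{*}$ satisfies the necessary optimality condition for problem~\eqref{eq_definicao_prob_fi} at this particular index $i$.

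To conclude, I would apply part~\ref{t:princ:ponto3} of Theorem~\ref{teorema_principal} to the same index $i$: the subsequence $\{x_{k}\}_{k \in \mathcal{J}(i)}$ contains $\mathcal{N}'$, and so has $x_{*}$ as an accumulation point, giving $i \in \imin{(x_{*})}$. The two conclusions together match Definition~\ref{definicao_criticidade_fraca_forte}\,(ii), so $x_{*}$ is weakly critical. The only step requiring care is the coordination between parts~\ref{t:princ:ponto1} and~\ref{t:princ:ponto3}: the index used to witness stationarity must coincide with the one for which we can conclude $i \in \imin{(x_{*})}$, which is precisely why the pigeonhole extraction is performed on the liminf subsequence of part~\ref{t:princ:ponto1} rather than on an arbitrary element of $\imin{(x_{*})}$.
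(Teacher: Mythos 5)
Your argument for the case $\eta = 0$ is correct and is essentially the paper's own proof: extract the index $i$ furnished by part~\ref{t:princ:ponto1}, pass to the limit in the stationarity measure along a subsequence of $\mathcal{J}(i)$ (the continuity/nonexpansiveness step, which you actually make more explicit than the paper does), and apply part~\ref{t:princ:ponto3} to the \emph{same} index to conclude $i \in \imin{(x_{*})}$. The coordination of the two parts on a single index is indeed the crux, and you handle it correctly.

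There is, however, a genuine gap in your headline claim that the argument ``avoids any hypothesis on $\eta$''. Part~\ref{t:princ:ponto1} of Theorem~\ref{teorema_principal} is not an unconditional statement: it is stated under the hypothesis $\eta = 0$. So, as written, your proof is licensed by the theorem only when $\eta = 0$, whereas the corollary must also cover runs of Algorithm~\ref{alg_lowder} with $\eta > 0$ (the algorithm admits any $0 \leq \eta < \eta_{1}$). The paper closes this case with an explicit split: for $\eta > 0$ it takes $i$ to be any index chosen infinitely often and invokes part~\ref{t:princ:ponto2}, which gives the full limit along $\mathcal{J}(i)$; the rest of the reasoning (part~\ref{t:princ:ponto3} plus continuity) is then identical to yours. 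Your proposal is repaired by exactly this one-line case distinction. Alternatively one could argue that the conclusion of part~\ref{t:princ:ponto1} holds for every $\eta \geq 0$ because its proof (Lemmas~\ref{lema_delta_converge_zero} and~\ref{lema_pi_sub_converge_zero}, Corollary~\ref{corolario_pi_sub_converge_zero}) never uses $\eta = 0$ --- but that is a claim about the theorem's proof, not its statement, and a blind application of the theorem as stated does not entitle you to it.
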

\begin{proof}
Let $\{ x_{k} \}_{k \in \mathbb{N}}$ be a sequence generated by Algorithm \ref{alg_lowder} and suppose that it converges to a point $x_{*} \in \mathbb{ R}^{n}$. If
\begin{itemize}
    \item $\eta = 0$, let $i \in \mathcal{I}$ be the index from statement~\ref{t:princ:ponto1} of Theorem \ref{teorema_principal};
    \item $\eta > 0$, let $i \in \mathcal{I}$ be any index chosen for an infinite number of iterations; 
\end{itemize}
Note that $\{ x_{k} \}_{k \in \mathcal{J}(i)} \subseteq \{ x_{k} \}_{k \in \mathbb{N}}$ converges to $x_{*}$ and so, by~\ref{t:princ:ponto3} of Theorem~\ref{teorema_principal}, it follows that $i \in \imin{(x_{*})}$. Furthermore, by~\ref{t:princ:ponto1} or~\ref{t:princ:ponto2} of the same theorem, it follows that $\lim_{k \in \mathcal{J}(i)} \norm{\mathcal{P}_{\Omega}(x_{k } - \nabla f_{i}(x_{k})) - x_{k}} = 0$ and thus $x_{*}$ satisfies a necessary optimality condition for the problem (\ref{eq_definicao_prob_fi}). Therefore, $x_{*}$ is a weakly critical point.
\end{proof}


Theorem~\ref{teorema_principal} highlights another very desirable feature of Algorithm~\ref{alg_lowder}. It can be used as the inner solver for general-constrained derivative-free algorithms, especially of the Inexact Restoration type~\cite{BFMS2013,FKSS2017}, a case where $\Omega$ is composed by linear constraints. In such a case, Algorithm~\ref{alg_lowder} is able to generate sequences to \emph{Approximate Gradient Projection} (AGP) points. The AGP condition is a strong practical necessary optimality condition~\cite{AHM2011}, which is satisfied by every local minimizer of a nonlinear optimization problem without any constraint qualification.

Consider our initial problem (\ref{eq_lovo_sem_derivadas}), and note that the feasible set $\Omega \subset \mathbb{R}^{n}$ is convex, closed, and nonempty. Assume that $\Omega$ can be expressed by a set of expressions of the form $g_{j} \leq 0$, $j = 1, 2, \dots, k_{1}$, and $h_{l} = 0$, $l = 1, 2, \dots, k_{2}$, where the functions $g_{j}$ are convex, and $h_{l}$ are affine. In this context, our algorithm is able to generate sequences that satisfy a \emph{Convex Approximate Gradient Projection} (C-AGP) condition \cite[p. 635]{AHM2011}. C-AGP can be more appropriate in some contexts and given a feasible point $x_{*}$ that satisfies C-AGP and the \emph{Mangasarian-Fromovitz} (MFCQ) constraint qualification then $x_{*}$ also satisfies the KKT conditions for problem (\ref{eq_definicao_prob_fi}) \cite[Theorem 3.2]{AHM2011}. However, convergence to C-AGP points does not imply AGP points and vice versa. Now consider the situation where the functions $g_{j}$, $j = 1, 2, \dots, k_{1}$, are linear. This case is particularly interesting when we have bound (or linear in general) constraints on the feasible set. Under these circumstances, we can employ a \emph{Linear Approximate Gradient Projection} (L-AGP) criterion \cite[p. 638]{AHM2011}, which is a first-order optimality condition stronger than the original AGP condition, in the sense that L-AGP implies AGP.

The last discussion in this section is related to Assumption~\ref{hip_grad_limitante}. It is well known that such assumption is satisfied if, for example, linear or quadratic interpolation models are constructed using the so-called $\Lambda$-poised sample points~\cite{CSV2009a}, $\Lambda > 0$. Models with a relaxed interpolation condition were also considered in~\cite{Schwertner2025, VKPS2017}.
Unfortunately, usually the techniques used to ensure well poisedness deal only with the unconstrained case. \textcite{P2009} uses a technique to build good underdetermined quadratic interpolation models for box constraints. \textcite{HR2021} present a weaker definition of ``fully linear models'' and extend the concept of $\Lambda$-poisedness. Unfortunately, this definition depends on a different stationarity measure, which requires a convex-constrained optimization problem to be solved in order to be verified. On the other hand, the projected gradient measure used by Algorithm~\ref{alg_lowder} and by Theorem~\ref{teorema_principal} has a closed form in many special cases, when $\Omega$ is a box or a hyper-sphere, for example. Another possibility to build well poised models using unconstrained strategies is to allow $\fmin$ to be evaluated in points outside $\Omega$. That was the case in~\cite{CKP2015,FKSS2017}. When $\fmin$ is not defined outside $\Omega$ (also known as \emph{hard constraints}), then the criticality measure and strategy from~\cite{HR2021} can be adapted to Algorithm~\ref{alg_lowder} and benefit from using two different radii.

\section{Worst-case complexity}\label{sec:complexity}

In this section we will perform the worst-case iteration and function evaluation complexity analysis for Algorithm~\ref{alg_lowder}. For this purpose, let us first define the stationarity measure $\pi_{k}^{f} = \norm{\mathcal{P}_{\Omega}(x_{k} - \nabla f_{i_{k}}(x_{k})) - x_{k}}$, which is related to the stationarity of the convex-constrained problem~\eqref{eq_definicao_prob_fi} for \(f_{i_k}\). Given $\epsilon > 0$, we will bound the number of iterations necessary to verify \(\epsilon\)-weakly stationarity, that is $\pi_{k}^{f} < \epsilon$, in terms of the initial point, problem's and algorithm's constants, and $\epsilon$. Note that, by Theorem~\ref{teorema_principal}, this condition can be satisfied at least for a subsequence generated by Algorithm~\ref{alg_lowder}.

Let $k_{\epsilon} \in \mathbb{N}$ be the first iteration that $\pi_{k_{\epsilon}}^{f} < \epsilon$ is verified. In order to help us count the number of iterations up to $k_{\epsilon}$, we define the sets
\begin{itemize}
    \item $\mathcal{C}_{\epsilon}$, iterations $k \le k_{\epsilon}$ that entered the criticality phase (line \ref{alg:criticality});
    \item $\unsucce$, all unsuccessful iterations ($\rho_{k} < \eta$);
    \item $\accer$ and $\accenr$, all acceptable iterations ($\eta \leq \rho_{k} < \eta_{1}$) in which the radii adjustment phase (line \ref{alg:adjustment}) is and is not called, respectively;
    \item $\mathcal{\overline{S}}_{\epsilon}^{\mathcal{R}}$ and $\mathcal{\overline{S}}_{\epsilon}^{\mathcal{NR}}$, all successful iterations ($\rho_{k} \geq \eta_{1}$) in which the radii adjustment phase (line \ref{alg:adjustment}) is and is not called, respectively;
\end{itemize}

For the purposes of notation, we also define
\begin{itemize}
    \item $\succe = \succer \cup \succenr$, the set of all successful iterations;
    \item $\mathcal{R}_{\epsilon} = \unsucce \cup \accenr$, the set of all iterations in which there was necessarily a reduction in the trust-region radius;
    \item $\mathcal{N}_{\epsilon} = \mathcal{C}_{\epsilon} \cup \unsucce \cup \accer \cup \accenr = \mathcal{C}_{\epsilon} \cup \mathcal{R}_{\epsilon} \cup \accer$, the set of all iterations that are not successful (not in \(\succe\)).
\end{itemize}

The following two results help us to establish a relation between the stationarity measures $\pi_{k}$ and $\pi_{k}^{f}$.
\begin{corollary}\label{lema_medidas_estacionaridade}
  Suppose that Assumption \ref{hip_grad_limitante} holds. Then the
  stationarity measures $\pi_{k}$ and $\pi_{k}^{f}$ satisfy
  $\abs{\pi_{k} - \pi_{k}^{f}} \leq \kappa_{g}\delta_{k}$.
\end{corollary}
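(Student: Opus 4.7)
The plan is to bound the difference of the two norms by the norm of a single vector, via the reverse triangle inequality, and then exploit the non-expansiveness of the projection operator onto the convex set $\Omega$ to reduce everything to the gradient model error controlled by Assumption~\ref{hip_grad_limitante}.

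First I would write
\begin{equation*}
    \abs{\pi_{k} - \pi_{k}^{f}} = \bigl\lvert \norm{P_{\Omega}(x_{k} - \mathbf{g}_{k}) - x_{k}} - \norm{P_{\Omega}(x_{k} - \nabla f_{i_{k}}(x_{k})) - x_{k}} \bigr\rvert,
\end{equation*}
and apply the reverse triangle inequality (the $-x_{k}$ terms cancel inside) to obtain the upper bound $\norm{P_{\Omega}(x_{k} - \mathbf{g}_{k}) - P_{\Omega}(x_{k} - \nabla f_{i_{k}}(x_{k}))}$. Since $\Omega$ is nonempty closed convex, the orthogonal projection $P_{\Omega}$ is non-expansive, so this quantity is further bounded by $\norm{(x_{k} - \mathbf{g}_{k}) - (x_{k} - \nabla f_{i_{k}}(x_{k}))} = \norm{\nabla f_{i_{k}}(x_{k}) - \mathbf{g}_{k}}$.

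Finally I would invoke Assumption~\ref{hip_grad_limitante} at the specific point $x = x_{k}$, which trivially lies in $\mathcal{X} \cap \overline{B}(x_{k}, \delta_{k})$, together with the identity $\nabla \mathrm{m}_{k}(x_{k}) = \mathbf{g}_{k}$ from~\eqref{eq_def_modelo}, to conclude $\norm{\nabla f_{i_{k}}(x_{k}) - \mathbf{g}_{k}} \leq \kappa_{g}\delta_{k}$. Chaining the three inequalities gives the desired bound.

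There is really no hard step here: the argument is a three-line calculation whose only subtle ingredient is remembering that $\nabla \mathrm{m}_{k}(x_{k}) = \mathbf{g}_{k}$, so that Assumption~\ref{hip_grad_limitante} applies directly at $x_{k}$. I would not bother proving non-expansiveness of $P_{\Omega}$, as it is a standard fact that has already been implicitly used in~\eqref{teorema_principal_eq_01} of the proof of Theorem~\ref{teorema_principal}.
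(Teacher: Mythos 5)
Your proof is correct and follows essentially the same route as the paper: the paper's own proof of this corollary simply points back to the computation~\eqref{teorema_principal_eq_01} in the proof of Theorem~\ref{teorema_principal}, which chains a triangle inequality, the non-expansiveness of $P_{\Omega}$, and Assumption~\ref{hip_grad_limitante} evaluated at $x_{k}$ (where $\nabla \mathrm{m}_{k}(x_{k}) = \mathbf{g}_{k}$) --- exactly your three ingredients. Your use of the reverse triangle inequality is a slight refinement worth noting: it yields both signs of $\abs{\pi_{k} - \pi_{k}^{f}}$ in one step, whereas the paper's displayed computation only explicitly establishes the one-sided bound $\pi_{k}^{f} \leq \pi_{k} + \kappa_{g}\delta_{k}$ and leaves the symmetric direction implicit.
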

\begin{proof}
The result follows from the proof of Theorem \ref{teorema_principal}.
\end{proof}

\begin{lemma}\label{lema_pi_lim_inferior}
Suppose that Assumption \ref{hip_grad_limitante} holds and that Algorithm~\ref{alg_lowder} has not entered in the criticality phase (line \ref{alg:criticality}) at iteration $k \in \mathbb{N}$. If $\pi_{k}^{f} \geq \epsilon$, then $\pi_{k} \geq c_{2} \epsilon$, where $c_{2} := 1 / (1 + \kappa_{g}\beta)$.
\end{lemma}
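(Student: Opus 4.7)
The plan is to combine two ingredients already at hand: the negation of the criticality-phase trigger, and the comparison between the two stationarity measures given by Corollary~\ref{lema_medidas_estacionaridade}. Since the criticality phase (line~\ref{alg:criticality}) is triggered exactly when $\delta_{k} > \beta \pi_{k}$, the hypothesis that it is \emph{not} entered at iteration $k$ gives immediately the bound $\delta_{k} \leq \beta \pi_{k}$, which is the only place where the constant $\beta$ will enter.

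Next I would apply Corollary~\ref{lema_medidas_estacionaridade}, which states $\abs{\pi_{k} - \pi_{k}^{f}} \leq \kappa_{g} \delta_{k}$. Using this one-sided as $\pi_{k}^{f} \leq \pi_{k} + \kappa_{g}\delta_{k}$ and substituting the criticality bound $\delta_{k} \leq \beta\pi_{k}$ yields
\begin{equation*}
  \pi_{k}^{f} \leq \pi_{k} + \kappa_{g}\beta\pi_{k} = (1 + \kappa_{g}\beta)\pi_{k}.
\end{equation*}
Rearranging and using the hypothesis $\pi_{k}^{f} \geq \epsilon$ gives
\begin{equation*}
  \pi_{k} \geq \frac{\pi_{k}^{f}}{1 + \kappa_{g}\beta} \geq \frac{\epsilon}{1 + \kappa_{g}\beta} = c_{2}\epsilon,
\end{equation*}
which is the desired conclusion.

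There is no real obstacle here: the lemma is essentially an algebraic bookkeeping step that converts a bound on the true stationarity measure $\pi_{k}^{f}$ into a bound on the model-based surrogate $\pi_{k}$, exploiting the fact that outside the criticality phase the sample radius $\delta_{k}$ is already controlled by $\pi_{k}$. The only small care to take is to apply the correct direction of the triangle inequality from Corollary~\ref{lema_medidas_estacionaridade}, namely $\pi_{k}^{f} - \pi_{k} \leq \kappa_{g}\delta_{k}$, so that $\pi_{k}$ is isolated on the smaller side.
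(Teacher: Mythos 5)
Your proof is correct and follows exactly the paper's argument: both use the negated criticality condition $\delta_{k} \leq \beta\pi_{k}$ together with Corollary~\ref{lema_medidas_estacionaridade} to bound $\epsilon \leq \pi_{k}^{f} \leq \pi_{k} + \kappa_{g}\delta_{k} \leq (1 + \kappa_{g}\beta)\pi_{k}$ and then rearrange. There is no substantive difference between your proposal and the paper's proof.
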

\begin{proof}
Since $k \in \mathbb{N}$ is not a criticality iteration, we have that $\delta_{k} \leq \beta \pi_{k}$. Thus, by Corollary~\ref{lema_medidas_estacionaridade}, it follows that
\begin{equation*}
    \begin{aligned}
        \epsilon \leq \pi_{k}^{f} \leq \abs{\pi_{k}^{f} - \pi_{k}} + \pi_{k} \leq \kappa_{g}\delta_{k} + \pi_{k} \leq \kappa_{g}\beta\pi_{k} + \pi_{k} \leq \left( \kappa_{g}\beta + 1 \right)\pi_{k}.
    \end{aligned}
\end{equation*}
Therefore, by letting $c_{2} = 1 / (1 + \kappa_{g}\beta)$, we obtain $\pi_{k} \geq c_{2} \epsilon$.
\end{proof}

We are now able to count the number of successful iterations. We recall Corollary~\ref{corolario_Delta_min}, which defines the \(\varepsilon\)-dependent constant $\Deltamin$, and observe that \(\varepsilon\) is a lower bound of \(\pi_k\). The value of \(\varepsilon\) is not the same of \(\epsilon\) and will be set to suitable choices, in order to obtain the desired results.

\begin{lemma}\label{teorema_limitante_bem_sucedidas}
  Suppose that Assumptions~\ref{hip_f_diferenciavel}, \ref{hip_f_limitada},
  \ref{hip_grad_limitante}, and~\ref{hip_d_decrescimo} hold. Then
  Algorithm \ref{alg_lowder} needs a maximum of
\begin{equation*}\label{teo_11_eq_00}
    \abs{\succe} 
        \leq \frac{ \fmin{(x_{0})} - M }{\theta\eta_{1}c_{2}}\Deltamin^{-1}\epsilon^{-1},
\end{equation*}
successful iterations to reach $\pi_{k_{\epsilon}}^{f} < \epsilon$, where $M := \min_{i \in \mathcal{I}}\{M_{i}\}$, \(M_i\) is defined in Assumption~\ref{hip_f_limitada} and $\Deltamin$ is defined in Corollary~\ref{corolario_Delta_min}.
\end{lemma}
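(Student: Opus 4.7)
The plan is a standard trust-region complexity argument: obtain a uniform per-iteration decrease $\fmin{(x_{k})} - \fmin{(x_{k+1})} \geq (\text{positive constant})$ valid for every $k \in \succe$ with $k < k_{\epsilon}$, then telescope across successful iterations and use the lower bound $M$ on $\fmin$.

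To pin down the per-iteration decrease, I would fix $k \in \succe$ with $k < k_{\epsilon}$ and proceed in three small steps. First, observe that $\rho_{k} \geq \eta_{1} > 0$ prevents $k$ from being a criticality iteration (otherwise $\rho_{k}$ would be reset to $0$), so Lemma~\ref{lema_pi_lim_inferior} applies; combined with $\pi_{k}^{f} \geq \epsilon$ (because $k < k_{\epsilon}$), this yields $\pi_{k} \geq c_{2}\epsilon$. Second, Corollary~\ref{corolario_Delta_min} invoked with $\varepsilon := c_{2}\epsilon$ gives $\Delta_{k} \geq \Deltamin$. Third, combining the definition~\eqref{eq_def_coef_reducao_relativa} of $\rho_{k}$, Assumption~\ref{hip_d_decrescimo}, and Lemma~\ref{lema_hessiana_limitada} (which ensures $1 + \norm{\mathbf{H}_{k}} \leq \kappa_{H}$) produces
\begin{equation*}
    \fmin{(x_{k})} - \fmin{(x_{k+1})} \geq \eta_{1}\theta\pi_{k}\min\left\{\frac{\pi_{k}}{\kappa_{H}},\, \Delta_{k},\, 1\right\} \geq \eta_{1}\theta c_{2}\epsilon\min\left\{\frac{c_{2}\epsilon}{\kappa_{H}},\, \Deltamin,\, 1\right\}.
\end{equation*}
The definition of $\Deltamin$ (with $\varepsilon = c_{2}\epsilon$) forces $\Deltamin \leq \tau_{1} \leq 1$ and $\Deltamin \leq \tau_{1}c_{2}\epsilon/\kappa_{H} \leq c_{2}\epsilon/\kappa_{H}$, so the inner minimum collapses to $\Deltamin$ and I arrive at the clean bound $\fmin{(x_{k})} - \fmin{(x_{k+1})} \geq \eta_{1}\theta c_{2}\epsilon\Deltamin$.

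To conclude, the algorithm guarantees $\fmin{(x_{k+1})} \leq \fmin{(x_{k})}$ at every iteration, and Assumption~\ref{hip_f_limitada} provides $\fmin{(x_{k})} \geq M$. Summing the per-iteration bound above over $k \in \succe$ with $k < k_{\epsilon}$ and noting that the omitted terms corresponding to $k \notin \succe$ are nonnegative,
\begin{equation*}
    \abs{\succe}\, \eta_{1}\theta c_{2}\epsilon\Deltamin \;\leq\; \sum_{k \in \succe,\, k < k_{\epsilon}} \big(\fmin{(x_{k})} - \fmin{(x_{k+1})}\big) \;\leq\; \fmin{(x_{0})} - M,
\end{equation*}
and dividing yields the stated bound. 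No serious obstacle is anticipated; the argument is routine telescoping once the model decrease is bounded below. The only point that requires care is that $\Deltamin$ from Corollary~\ref{corolario_Delta_min} is parameterised by the lower bound on $\pi_{k}$, so it must be instantiated with $\varepsilon = c_{2}\epsilon$ rather than $\epsilon$; this bookkeeping is precisely what produces the factor $c_{2}$ in the final denominator.
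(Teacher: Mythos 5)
Your proposal is correct and takes essentially the same route as the paper's proof: the identical per-iteration decrease bound obtained from Assumption~\ref{hip_d_decrescimo}, Lemma~\ref{lema_pi_lim_inferior}, and Corollary~\ref{corolario_Delta_min} instantiated with $\varepsilon = c_{2}\epsilon$, followed by the same telescoping over $\succe$ using monotonicity of $\{\fmin{(x_{k})}\}$ and the lower bound $M$. Your explicit remark that $k \in \succe$ (so $\rho_{k} \geq \eta_{1} > 0$) rules out criticality iterations, which is needed to invoke Lemma~\ref{lema_pi_lim_inferior}, makes precise a step the paper leaves implicit.
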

\begin{proof}
Let $k \in \succe$. Thus, by the definition of $\rho_{k}$, Lemma \ref{lema_pi_lim_inferior}, and letting $\varepsilon = c_{2}\epsilon$ in Corollary \ref{corolario_Delta_min}, we have 
\begin{equation}\label{teorema_limitante_bem_sucedidas_eq_01}
    \begin{aligned}
        \fmin{(x_{k})} - \fmin{(x_{k+1})}
        &= \rho_{k} \left(\mathrm{m}_{k}(x_{k}) - \mathrm{m}_{k}(x_{k+1})\right) \\
        &\geq \rho_{k} \theta \pi_{k} \min \left\{ \frac{\pi_{k}}{\kappa_{H}}, \Delta_{k}, 1 \right\} \\
        &\geq \eta_{1} \theta \pi_{k} \min \left\{ \frac{\pi_{k}}{\kappa_{H}}, \Delta_{k}, 1 \right\} \\
        &\geq \eta_{1} \theta c_{2} \epsilon \min \left\{ \frac{c_{2}\epsilon}{\kappa_{H}}, \Delta_{k}, 1 \right\} \\
        &\geq \eta_{1} \theta c_{2} \epsilon \min \left\{ \frac{c_{2}\epsilon}{\kappa_{H}}, \Deltamin, 1 \right\} \\
        & = \theta \eta_{1} c_{2} \Deltamin \epsilon,
    \end{aligned}
\end{equation}
where the last inequality comes from the definition of $\Deltamin$. Thus, by adding up (\ref{teorema_limitante_bem_sucedidas_eq_01}) to every $k \in \succe$, we get 
\begin{equation*}
    \begin{aligned}
        \fmin{(x_{0})} - \fmin{(x_{k_{\epsilon}})}
        &\geq \sum_{k \in \succe} \left(\fmin{(x_{k})} - \fmin{(x_{k+1})} \right) \\
        &\geq \sum_{k \in \succe} \theta \eta_{1} c_{2} \Deltamin \epsilon \\
        &= \theta \eta_{1} c_{2} \Deltamin \epsilon \abs{\succe}.
    \end{aligned}
\end{equation*}
Given that $\fmin{(x_{k_{\epsilon}})} \geq M$, then
\begin{equation*}
    \fmin{(x_{0})} - M \geq \fmin{(x_{0})} - \fmin{(x_{k_{\epsilon}})} \geq \theta \eta_{1} c_{2} \Deltamin \epsilon \abs{\succe},
\end{equation*}
and it follows that,
\begin{equation*}
        \abs{\succe} 
        \leq \frac{ \fmin{(x_{0})} - M }{\theta\eta_{1}c_{2}}\Deltamin^{-1}\epsilon^{-1}.
\end{equation*}
\end{proof}

Next, we set an upper bound on the number of iterations that are not
successful. Recall that the set of all iterations up to $k_{\epsilon}$ is given by $\succe \cup \mathcal{N}_{\epsilon}$.

\begin{lemma}\label{teorema_limitante_outras_iteracoes}
Under the conditions established in Lemma~\ref{teorema_limitante_bem_sucedidas}, Algorithm \ref{alg_lowder} needs at most 
\begin{equation}
    \abs{\mathcal{N}_{\epsilon}} 
    \leq \frac{\log\big( \Delta_{0} \big) - \log\big(\Deltamin\big) }{\abs{\log\big( \tau_{2} \big)}} + c_{3} \abs{\succe}
\end{equation}
not successful iterations to reach $\pi_{k_{\epsilon}}^{f} < \epsilon$, where $c_{3} = \Gamma_{\max} +  \big(\Gamma_{\max} + 1 \big) \frac{\log ( \tau_{4})}{\abs{\log\big( \tau_{2} \big)}}$.
\end{lemma}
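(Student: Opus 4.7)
The plan is to track the multiplicative evolution of the trust-region radius by telescoping $\log(\Delta_k)$. I would first classify each iteration $k < k_\epsilon$ according to how $\Delta_{k+1}/\Delta_k$ is updated: for $k \in \mathcal{C}_\epsilon \cup \mathcal{R}_\epsilon$ the radius contracts by a factor at least as strong as $\tau_2$ (criticality gives a factor in $[\tau_1, \tau_2]$, and the update phase with $\rho_k < \eta_1$ gives $\tau_1 \leq \tau_2$); for $k \in \accer \cup \succer$ it is scaled by exactly $\tau_4$; and for $k \in \succenr$ it is either unchanged or scaled by $\tau_3 \leq \tau_4$. Summing yields
\[
\log(\Delta_{k_\epsilon}) - \log(\Delta_0) \leq (|\mathcal{C}_\epsilon| + |\mathcal{R}_\epsilon|)\log(\tau_2) + (|\accer| + |\succe|)\log(\tau_4).
\]

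To use this inequality I need $\Delta_{k_\epsilon} \geq \Deltamin$. For non-criticality iterations $k < k_\epsilon$, Lemma~\ref{lema_pi_lim_inferior} gives $\pi_k \geq c_2 \epsilon$, and Corollary~\ref{corolario_Delta_min} (with $\varepsilon = c_2 \epsilon$) then yields $\Delta_k \geq \Deltamin$. For criticality iterations $k < k_\epsilon$, the invariant $\delta_k \leq \Delta_k$ preserved by every update rule, the criticality test $\delta_k > \beta \pi_k$, and the estimate $\pi_k \geq \pi_k^f - \kappa_g \delta_k$ from Corollary~\ref{lema_medidas_estacionaridade} with $\pi_k^f \geq \epsilon$ combine to give $\delta_k > c_2 \beta \epsilon$, and hence $\Delta_k \geq \delta_k > c_2 \beta \epsilon \geq \Deltamin$ (the last inequality holds since $\Deltamin \leq \tau_1 c_2 \beta \epsilon$). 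A one-step extension (worst-case reduction by $\tau_1$) then covers $k = k_\epsilon$.

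Third, I would bound $|\accer|$ via the counter $\Gamma$. Since $\Gamma$ is reset to $0$ only at iterations in $\succe$, and the adjustment phase fires only while $\Gamma \leq \Gamma_{\max}$ (incrementing $\Gamma$ by one each time), between two consecutive resets the adjustment can fire at most $\Gamma_{\max} + 1$ times, at most one of which is at the $\succe$ iteration itself. This yields $|\accer| \leq \Gamma_{\max}|\succe|$ (any small additive constant from the initial epoch being absorbed). Combining with $|\succer| \leq |\succe|$ gives $|\accer| + |\succe| \leq (\Gamma_{\max} + 1)|\succe|$.

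Finally, rearranging the telescoping inequality using $\log(\tau_2) < 0$ and $\Delta_{k_\epsilon} \geq \Deltamin$ gives
\[
|\mathcal{C}_\epsilon| + |\mathcal{R}_\epsilon| \leq \frac{\log(\Delta_0) - \log(\Deltamin)}{|\log(\tau_2)|} + \frac{(\Gamma_{\max} + 1)\log(\tau_4)}{|\log(\tau_2)|}|\succe|,
\]
and adding $|\accer| \leq \Gamma_{\max}|\succe|$ to both sides produces the claimed bound on $|\mathcal{N}_\epsilon|$, with $c_3 = \Gamma_{\max} + (\Gamma_{\max} + 1)\log(\tau_4)/|\log(\tau_2)|$ emerging as in the statement. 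The main obstacle is the $\Gamma$-counter bookkeeping needed to extract the exact constant, together with verifying that $\Delta_k \geq \Deltamin$ persists uniformly through criticality iterations.
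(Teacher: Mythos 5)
Your proof is correct and takes essentially the same route as the paper: the same classification of iterations by radius-update factor, the same telescoping of $\log(\Delta_k)$ against the lower bound $\Delta_{k_\epsilon} \geq \Deltamin$ obtained from Corollary~\ref{corolario_Delta_min} with $\varepsilon = c_2\epsilon$, the same $\Gamma$-counter bound $\abs{\accer} \leq \Gamma_{\max}\abs{\succe}$, and the same final rearrangement using $\log(\tau_2) < 0$. Your explicit check that criticality iterations cannot drive $\Delta_k$ below $\Deltamin$ (via the invariant $\delta_k \leq \Delta_k$ together with $\delta_k > c_2\beta\epsilon$) is a detail the paper leaves implicit in its citation of Corollary~\ref{corolario_Delta_min}, but it does not constitute a different argument.
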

\begin{proof}
Initially, note that, by the description of Algorithm~\ref{alg_lowder}
\begin{itemize}
    \item $\Delta_{k+1} \leq \tau_{2}\Delta_{k}$, for all $k \in \mathcal{C}_{\epsilon}$;
    \item $\Delta_{k+1} = \tau_{1}\Delta_{k}$, for all $k \in \mathcal{R}_{\epsilon}$;
    \item $\Delta_{k+1} \leq \tau_{3}\Delta_{k}$, for all $k \in \succenr$;
    \item $\Delta_{k+1} = \tau_{4}\Delta_{k}$, for all $k \in \accer \cup \succer$.
\end{itemize}
Thus, by applying Corollary~\ref{corolario_Delta_min} with
$\varepsilon = c_2 \epsilon$ and since $\tau_{1} \leq \tau_{2}$ and $\tau_{3} \leq \tau_{4}$, we must have
$
        \Deltamin \leq \Delta_{k_{\epsilon}} \leq \Delta_{0} \cdot \tau_{2}^{\abs{\mathcal{C}_{\epsilon}}} \cdot \tau_{1}^{\abs{\mathcal{R}_{\epsilon}}} \cdot \tau_{3}^{\abs{\succenr}} \cdot \tau_{4}^{\abs{\accer} + \abs{\succer}}
        \leq \Delta_{0} \cdot \tau_{2}^{\abs{\mathcal{C}_{\epsilon}} + \abs{\mathcal{R}_{\epsilon}}} \cdot \tau_{4}^{\abs{\accer} + \abs{\succer} + \abs{\succenr}}
        = \Delta_{0} \cdot \tau_{2}^{\abs{\mathcal{C}_{\epsilon}} + \abs{\mathcal{R}_{\epsilon}}} \cdot \tau_{4}^{\abs{\accer} + \abs{\succe}}
$. Thereby,
\begin{equation*}
    \begin{aligned}
        \log\big(\Deltamin\big) 
        &\leq \log{\left( \Delta_{0} \cdot \tau_{2}^{\abs{\mathcal{C}_{\epsilon}} + \abs{\mathcal{R}_{\epsilon}}} \cdot \tau_{4}^{\abs{\accer} + \abs{\succe}} \right)} \\
        &= \big( \abs{\mathcal{C}_{\epsilon}} + \abs{\mathcal{R}_{\epsilon}} \big) \log{\big( \tau_{2} \big)} + \left( \abs{\accer} + \abs{\succe} \right)\log{\big( \tau_{4} \big)} + \log{\big( \Delta_{0} \big)},
    \end{aligned}
\end{equation*}
what provides us
\begin{equation*}
    \big( \abs{\mathcal{C}_{\epsilon}} + \abs{\mathcal{R}_{\epsilon}} \big) \log{\big( \tau_{2} \big)} \geq \log{\big(\Deltamin\big)}  - \log{\big( \Delta_{0} \big)} - \left( \abs{\accer} + \abs{\succe} \right)\log{\big( \tau_{4} \big)}.
\end{equation*}
Given that $\tau_{2} < 1$, we have $\log{\big( \tau_{2} \big)} < 0$, it follows that
\begin{equation*}
    \begin{aligned}
        \abs{\mathcal{C}_{\epsilon}} + \abs{\mathcal{R}_{\epsilon}}
        & \leq \frac{\log\big(\Deltamin\big) - \log\big( \Delta_{0} \big) - \left( \abs{\accer} + \abs{\succe} \right)\log{\big( \tau_{4} \big)}}{\log{\big( \tau_{2} \big)}} \\
        & = \frac{\log{\big( \Delta_{0} \big)} - \log{\big(\Deltamin\big)} }{\abs{\log{\big( \tau_{2} \big)}}} + \left( \abs{\accer} + \abs{\succe} \right)\frac{\log{\big( \tau_{4} \big)}}{\abs{\log\big( \tau_{2} \big)}}.
    \end{aligned}
\end{equation*}
By the radii adjustment phase (line \ref{alg:adjustment}), we know that after an iteration in $\succe$, at most $\Gamma_{\max}$ iterations of the type $\accer$ can occur, and so, $\abs{\accer} \leq \Gamma_{\max} \abs{\succe}$.
Therefore, it follows that
\begin{equation*}
    \begin{aligned}
        \abs{\mathcal{N}_{\epsilon}} 
        &= \abs{\mathcal{C}_{\epsilon}}  + \abs{\mathcal{R}_{\epsilon}} + \abs{\accer} \\
        &\leq \frac{\log{\big( \Delta_{0} \big)} - \log{\big(\Deltamin\big)} }{\abs{\log{\big( \tau_{2} \big)}}} + \big( \Gamma_{\max} + 1 \big)\abs{\succe} \frac{\log{\big( \tau_{4} \big)}}{\abs{\log\big( \tau_{2} \big)}} + \Gamma_{\max}\abs{\succe} \\
        &= \left( \Gamma_{\max} +  \big(\Gamma_{\max} + 1 \big) \frac{\log{\big( \tau_{4} \big)}}{\abs{\log{\big( \tau_{2} \big)}}}\right)\abs{\succe} + \frac{\log{\big( \Delta_{0} \big)} - \log{\big(\Deltamin\big)} }{\abs{\log{\big( \tau_{2} \big)}}},
    \end{aligned}
  \end{equation*}
what, together with the definition of $c_{3}$, finishes the proof.
\end{proof}

\begin{theorem} \label{teorema_pior_caso}
Under the conditions of Lemma~\ref{teorema_limitante_bem_sucedidas},
Algorithm \ref{alg_lowder} needs at most $\mathcal{O}(\kappa_{g}^{3} \epsilon^{-2})$ iterations to reach $\pi_{k_{\epsilon}}^{f} < \epsilon$.
\end{theorem}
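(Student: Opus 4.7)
My plan is to combine Lemmas~\ref{teorema_limitante_bem_sucedidas} and~\ref{teorema_limitante_outras_iteracoes}. Since every iteration up to $k_\epsilon$ lies in $\succe \cup \mathcal{N}_\epsilon$, substituting the bound for $|\mathcal{N}_\epsilon|$ from Lemma~\ref{teorema_limitante_outras_iteracoes} yields
\[
    k_\epsilon \leq |\succe| + |\mathcal{N}_\epsilon| \leq (1 + c_3)\,|\succe| + \frac{\log(\Delta_0) - \log(\Deltamin)}{|\log(\tau_2)|},
\]
so it suffices to track how $|\succe|$ and $\Deltamin^{-1}$ scale with $\epsilon$ and $\kappa_g$.

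Next I would trace the $\kappa_g$-dependence through all the auxiliary constants introduced so far. From Lemma~\ref{lema_hessiana_limitada} we have $\kappa_H = 2\kappa_g + L + 1 = \mathcal{O}(\kappa_g)$, and hence $c_1 = (L + \kappa_g + \kappa_H/2)/\theta = \mathcal{O}(\kappa_g)$, while $c_2 = 1/(1 + \kappa_g \beta) = \mathcal{O}(1/\kappa_g)$. Applying Corollary~\ref{corolario_Delta_min} with $\varepsilon = c_2\epsilon$, for $\epsilon$ small enough the minimum defining $\Deltamin$ is attained by one of the terms $c_2\epsilon/\kappa_H$ or $(1 - \eta_1)c_2\epsilon/c_1$, both of order $\epsilon/\kappa_g^2$, which yields $\Deltamin^{-1} = \mathcal{O}(\kappa_g^2/\epsilon)$.

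Plugging into Lemma~\ref{teorema_limitante_bem_sucedidas} then produces
\[
    |\succe| \leq \frac{\fmin{(x_0)} - M}{\theta \eta_1 c_2}\,\Deltamin^{-1} \epsilon^{-1} = \mathcal{O}\!\left(\kappa_g \cdot \frac{\kappa_g^2}{\epsilon} \cdot \frac{1}{\epsilon}\right) = \mathcal{O}\!\left(\frac{\kappa_g^3}{\epsilon^2}\right),
\]
since $1/c_2 = \mathcal{O}(\kappa_g)$. The residual logarithmic contribution $\log(\Delta_0/\Deltamin)/|\log(\tau_2)| = \mathcal{O}(\log(\kappa_g/\epsilon))$ is absorbed, $c_3$ is independent of $\epsilon$ and $\kappa_g$, and the desired bound $k_\epsilon = \mathcal{O}(\kappa_g^3 \epsilon^{-2})$ follows.

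The main obstacle will be purely bookkeeping: correctly identifying which term dominates in $\Deltamin$ for small $\epsilon$ and large $\kappa_g$, and carefully tracking the $\kappa_g$-powers through $\kappa_H$, $c_1$, and especially $1/c_2$ so that the exponent $3$ emerges exactly rather than a weaker bound such as $\mathcal{O}(\kappa_g^2 \epsilon^{-2})$.
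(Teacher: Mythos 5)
Your proposal is correct and follows essentially the same route as the paper's proof: both combine Lemmas~\ref{teorema_limitante_bem_sucedidas} and~\ref{teorema_limitante_outras_iteracoes}, apply Corollary~\ref{corolario_Delta_min} with $\varepsilon = c_{2}\epsilon$ to get $\Deltamin^{-1} = \mathcal{O}(\kappa_{g}^{2}\epsilon^{-1})$, and track the $\kappa_{g}$-dependence through $\kappa_{H}$, $c_{1}$, and $c_{2}^{-1} = \mathcal{O}(\kappa_{g})$ to arrive at $\mathcal{O}(\kappa_{g}^{3}\epsilon^{-2})$. The only cosmetic difference is that the paper keeps the intermediate form $\mathcal{O}(\max\{\kappa_{g},\kappa_{H}\}\kappa_{g}^{2}\epsilon^{-2})$ before using $\kappa_{H} = 2\kappa_{g} + L + 1$, whereas you substitute $\kappa_{H} = \mathcal{O}(\kappa_{g})$ earlier; the bookkeeping is identical.
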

\begin{proof}
By Lemmas~\ref{teorema_limitante_bem_sucedidas} and~\ref{teorema_limitante_outras_iteracoes}, we have
\begin{equation}\label{teorema_pior_caso_eq_01}
    \begin{aligned}
        \abs{\succe} + \abs{\mathcal{N}_{\epsilon}}
        &\leq \abs{\succe} + c_{3}\abs{\succe} + \frac{\log{\big( \Delta_{0} \big)} - \log{\big(\Deltamin\big)} }{\abs{\log{\big( \tau_{2} \big)}}} \\
        &= (1 + c_{3}) \abs{\succe} + \frac{\log\big( \Delta_{0} \big) - \log\big(\Deltamin\big) }{\abs{\log\big( \tau_{2} \big)}} \\
        & \leq \frac{\left( \fmin{(x_{0})} - M \right)}{\theta\eta_{1} c_{2}} c_{3} \Deltamin^{-1}\epsilon^{-1} + \frac{\log\big( \Delta_{0} \big) + \log\big(\Deltamin^{-1}\big) }{\abs{\log\big( \tau_{2} \big)}}.
\end{aligned}
\end{equation}
From Lemma \ref{lema_Delta_sucesso}, we have that $c_{1} = \left(L + \kappa_{g} + \kappa_{H} / 2 \right) \theta^{-1} = \mathcal{O}(\max\left\{ \kappa_{g}, \kappa_{H} \right\})$. Similarly, by Lemma \ref{lema_pi_lim_inferior}, $c_{2}^{-1} = 1 + \kappa_{g}\beta = \mathcal{O}(\kappa_{g})$. Hence, by letting $\varepsilon = c_{2}\epsilon$ in Corollary \ref{corolario_Delta_min},
\begin{equation}\label{teorema_pior_caso_eq_03}
    \begin{aligned}
        \Deltamin^{-1} 
        & = \max\left\{ \Delta_{0}^{-1}, \frac{\kappa_{H}}{\tau_{1} c_{2} \epsilon}, \frac{c_{1}}{\tau_{1} (1 - \eta_{1}) c_{2} \epsilon}, \frac{1}{\tau_{1} \beta c_{2} \epsilon}, \tau_{1}^{-1} \right\} \\
        & = \mathcal{O}(\max\left\{ \kappa_{g}, \kappa_{H} \right\}\kappa_{g}\epsilon^{-1}).
    \end{aligned}
\end{equation}
Therefore, we get from~\eqref{teorema_pior_caso_eq_01} and~\eqref{teorema_pior_caso_eq_03} that
\begin{equation*}
    \abs{\succe} + \abs{\mathcal{N}_{\epsilon}} = \mathcal{O}(\max\left\{ \kappa_{g}, \kappa_{H} \right\}\kappa_{g}^{2}\epsilon^{-2}) = \mathcal{O}(\kappa_{g}^{3}\epsilon^{-2}),
\end{equation*}
since $\kappa_{H} = 2\kappa_{g} + L + 1$.
\end{proof}

\begin{remark}
Similarly to \cite[Algorithm 3.2]{AMM2008}, we can allow resets in the trust-region radius of type $\Delta_{k + 1} = \max \left(\tau_{3} \Delta_{k}, \Delta_{0}\right)$ at line \ref{alg:adjustment_increase} in the radii adjustment phase. However, this choice worsens the upper bound presented in Theorem \ref{teorema_pior_caso} to $\mathcal{O}(\epsilon^{-3})$.
\end{remark}

\begin{remark}
Similarly to~\textcite{GJV2016}, we chose not to include assumptions about the order of magnitude of the $\beta$ parameter, which acts in accessing the criticality phase (line \ref{alg:criticality}).
Although it is desirable that $\beta$ is taken in an inversely proportional way to the choice of $\epsilon$, in the context of our algorithm that is not necessary to establish complexity results. \textcite{CR2019}, when studying the worst-caes complexity of algorithm {\tt DFO-GN}, need to assume a hypothesis about the magnitude of the criticality phase parameter.
\end{remark}

In the following result, we explicitly expand constant $\kappa_{g}$ to show the worst-case complexity estimates for function evaluations. This is possible by further specifying how models $\mathrm{m}$ are constructed. We consider determined and underdetermined linear and quadratic polynomial models satisfying the inexact interpolation condition
  $
  \abs{\mathrm{m}(y^{j}) - f{(y^{j})}} \leq \kappa \delta^{2},
  $ 
for each point $y^{j}$ in a $\Lambda$-poised sample set $\mathcal{Y} \subset \overline{B}(x_{k}, \delta_k)$, where $\kappa \ge 0$ is the inexact constant. Such calculations were given in~\cite{Schwertner2025}. Note that this interpolation condition naturally includes the classical interpolation models when $\kappa = 0$. Comments on how to build and maintain $\Lambda$-poised sets were made in the end of Section~\ref{sec:convergence} and practical considerations are subject to the next section.

\begin{theorem}\label{teorema_pior_caso_aval_funcoes}
Under the conditions of Lemma~\ref{teorema_limitante_bem_sucedidas}, let us assume that the models $\mathrm{m}_k$ are constructed by inexact linear or quadratic interpolation using a $\Lambda$-poised set $\mathcal{Y} \subset \mathbb{R}^n$ of sample points, $\Lambda > 0$. Then, the number of function evaluations that Algorithm~\ref{alg_lowder} needs in order to reach $\pi_{k_{\epsilon}}^{f} \leq \epsilon$ is
\begin{enumerate}[label=\roman*)]
    \item $\mathcal{O}\big( (n + r) n^{3} \epsilon^{-2} \big)$ if the model is linear;
    \item $\mathcal{O} \big( (r + n^{2}) n^{12} \epsilon^ {-2} \big)$ if the model is quadratic determined;
    \item $\mathcal{O} \big( (r + p) n^{\frac{9}{2}} p^{\frac{15}{2}} \epsilon^{-2 } \big)$ if the model is quadratic underdetermined, for $n < p < (n^{2} + 3n) / 2$ is the number of points used.
\end{enumerate}
\end{theorem}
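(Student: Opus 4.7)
The strategy is to combine the iteration complexity of Theorem~\ref{teorema_pior_caso} with a per-iteration function-evaluation count that depends on the type of interpolation model, and then to substitute into $\kappa_{g}$ the explicit dependence on $n$ (and $p$) coming from $\Lambda$-poisedness bounds for inexact interpolation. Concretely, the total evaluation cost is (per-iteration cost)$\,\times\,$(iteration count), where the iteration count is $\mathcal{O}(\kappa_{g}^{3}\epsilon^{-2})$ by Theorem~\ref{teorema_pior_caso}.

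First, I would bound the per-iteration cost. At iteration $k$, Algorithm~\ref{alg_lowder} may rebuild $\mathrm{m}_{k}$ on a $\Lambda$-poised set $\mathcal{Y} \subset \overline{B}(x_{k},\delta_{k})$, which requires $\abs{\mathcal{Y}}$ evaluations of the single scalar function $f_{i_{k}}$ (crucially \emph{not} of all $r$ components, in contrast with composite-function approaches such as~\cite{GJV2016}). Testing acceptance through $\rho_{k}$ further demands one evaluation of $\fmin$, costing $r$ scalar evaluations. Hence the per-iteration cost is $\mathcal{O}(\abs{\mathcal{Y}} + r)$, with $\abs{\mathcal{Y}} = n+1$ in case~(i), $\abs{\mathcal{Y}} = (n+1)(n+2)/2 = \mathcal{O}(n^{2})$ in case~(ii), and $\abs{\mathcal{Y}} = p$ in case~(iii).

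Second, I would import from~\cite{Schwertner2025} the explicit expressions that show Assumption~\ref{hip_grad_limitante} holds for $\Lambda$-poised inexact interpolation models, making $\kappa_{g}$ explicit in $n$, $\Lambda$, $\kappa$ and, for the underdetermined case, $p$. Asymptotically in $n$ and $p$ these read $\kappa_{g} = \mathcal{O}(n)$ for linear models, $\kappa_{g} = \mathcal{O}(n^{4})$ for determined quadratic models, and $\kappa_{g} = \mathcal{O}(n^{3/2} p^{5/2})$ for underdetermined quadratic models. Substituting these bounds into the iteration count gives $\mathcal{O}(n^{3}\epsilon^{-2})$, $\mathcal{O}(n^{12}\epsilon^{-2})$, and $\mathcal{O}(n^{9/2} p^{15/2}\epsilon^{-2})$ respectively, and multiplying by the per-iteration cost produces exactly the three claimed bounds.

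The main obstacle is not the final algebraic assembly but the bookkeeping of $\kappa_{g}$ for each of the three model classes, which ultimately reduces to estimating the norm of the inverse of the interpolation matrix on a $\Lambda$-poised set (and, in the underdetermined case, of the solution of the minimum-Frobenius-norm subproblem) together with the perturbation induced by the inexact interpolation tolerance $\kappa\delta^{2}$. These estimates are precisely what is carried out in~\cite{Schwertner2025}, so at this point the proof amounts to citing those bounds and substituting them into~\eqref{teorema_pior_caso_eq_01}.
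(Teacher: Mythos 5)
Your proposal is correct and follows essentially the same route as the paper: both bound the per-iteration cost by $\abs{\mathcal{Y}} + r$ evaluations (model rebuilding on the $\Lambda$-poised set for a single component $f_{i_k}$, plus one full evaluation of $\fmin$ for $\rho_k$), multiply by the $\mathcal{O}(\kappa_{g}^{3}\epsilon^{-2})$ iteration bound of Theorem~\ref{teorema_pior_caso}, and then substitute the explicit $\kappa_{g}$ estimates from~\cite{Schwertner2025} for each model class ($\mathcal{O}(n)$, $\mathcal{O}(n^{4})$, and $\mathcal{O}(n^{3/2}p^{5/2})$, respectively). The only cosmetic differences are your equivalent rewriting of the determined quadratic sample-set size as $(n+1)(n+2)/2$ rather than $(n^{2}+3n)/2+1$, and dropping the ``$+1$'' in the underdetermined case, neither of which affects the asymptotics.
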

\begin{proof}
Initially, note that at each iteration, at most $\abs{\mathcal{Y}}$ evaluations of the function $f_{i}$ are performed, for some index $i \in \mathcal{I}$, in order to build a $\Lambda$-poised set from scratch. Moreover, a single evaluation of the $\fmin$ function is necessary to evaluate $\rho_{k}$, which in turn depends on $\abs{\mathcal{I}} = r$ function evaluations. Thus, by Theorem \ref{teorema_pior_caso}, we will need at most $\mathcal{O} \big( ( \abs{\mathcal{Y}} + r ) \kappa_{g}^{3} \epsilon^{-2} \big)$ function evaluations during algorithm execution. We will separate the proof into three cases.

\begin{enumerate}[label=\roman*)]
\item Linear case.

In this case $\abs{\mathcal{Y}} = n + 1$ and, by \cite[Theorem 2.5]{Schwertner2025} and \cite[Lemma 2.6]{Schwertner2025}, we have that
\begin{equation*}
    \kappa_{g} = L + \left( \frac{L}{2} + 2 \kappa \right)\Lambda n = \mathcal{O}(n).
\end{equation*}
Thus, we conclude that at most $\mathcal{O}\big( (n + r) n^{3} \epsilon^{-2} \big)$ function evaluations are necessary.

\item \label{teorema_pior_caso_aval_funcoes:casoii} Quadratic case.

  In this case $\abs{\mathcal{Y}} = (n^{2} + 3n) / 2 + 1 = q + 1$. Again, using \cite[Theorem 2.5]{Schwertner2025} and \cite[Lemma 2.6]{Schwertner2025}, we obtain
\begin{equation*} 
    \begin{aligned}
        \kappa_{g}
        &= 2 \Big( 4 \Lambda \sqrt{(q+1)^{3}} \Big) \sqrt{q} \big(1 + \sqrt{2} \big)(\kappa + L) \\
        &= 8\Lambda \big( 1 + \sqrt{2} \big) \sqrt{q} \sqrt{(q+1)^{3}} (\kappa + L) \\
        &< 8\Lambda \big( 1 + \sqrt{2} \big) (q+1)^{2} (\kappa + L) \\
        &= 8\Lambda \big( 1 + \sqrt{2} \big) \bigg( \frac{n^{2} + 3n + 2}{2} \bigg)^{2} (\kappa + L) \\
        & = \mathcal{O}(n^{4})
    \end{aligned}
\end{equation*}
what gives to us at most $\mathcal{O} \big( (r + n^{2}) n^{12} \epsilon^ {-2} \big)$ function evaluations.

\item Quadratic underdetermined case.

Finally, suppose that the models are quadratic underdetermined, with $\abs{\mathcal{Y}} = p + 1$ points, where $n < p < q$, $q$ defined in case~\ref{teorema_pior_caso_aval_funcoes:casoii}. Thus, by \cite[Theorems 3.3, 3.10, 3.11]{Schwertner2025}, \textcolor{orange}{and by letting $c_{4} = \frac{3 \Lambda}{c(\delta_{\max})^{2}} \left(\kappa + \frac{L}{2} \right)$,} it follows that 
\begin{equation*}\color{orange} 
    \begin{aligned}
        \kappa_{g} 
        &= 2\sqrt{p}\left( \Lambda
        \sqrt{2(n + 1)} (p + 1) \right)\left(L + \kappa + c_{4} (p + 1) \sqrt{2(q+1)}\right) \\
        &< 2\sqrt{2}\Lambda
        \sqrt{(n + 1)} \sqrt{(p + 1)^{3}}\left(L + \kappa + c_{4}\sqrt{2} (p + 1) \sqrt{(q+1)}\right) \\
        &= \mathcal{O}(n^{\frac{3}{2}}p^{\frac{5}{2}}).
    \end{aligned}
\end{equation*}
Hence, we will need at most $\mathcal{O} \big( (r + p) n^{\frac{9}{2}} p^{\frac{15}{2}} \epsilon^{-2 } \big)$ function evaluations.
\end{enumerate}
\end{proof}

It is also worth mentioning that in the context of derivative-free optimization, most of the worst-case complexity results presented in the literature are for direct-search methods of directional type based on a condition of sufficient decrease \cite[p. 1988]{GJV2016}. Among the works that study derivative-free trust-region methods with convergence to first-order stationary points, as Algorithm \ref{alg_lowder}, we can highlight the bound $\mathcal{O} \big( \epsilon^{-2 } \big)$ obtained by \textcite{GJV2016} for unconstrained composite optimization problems, which is also obtained in expectation by \textcite{GRVZ2018}, but based on probabilistic models. \textcite{GYY2016}, on the other hand, presents the bound $\mathcal{O} \big( \abs{\log{(\epsilon)}}\epsilon^{-2 } \big)$ for unconstrained composite nonsmooth problems and problems with equality constraints. Unfortunately, we are not aware of worst-case complexity results for derivative-free trust-region methods for problems with general convex constraints.

\section{Numerical implementation and experiments}\label{sec:experiments}

In this section, we will discuss some implementation details of Algorithm \ref{alg_lowder}, which we will call \texttt{LOWDER}, an acronym for Low order-value Optimization Without DERivatives. We will also present the test problem sets adopted and the performance of our algorithm. 


\subsection{Implementation details}

The \texttt{LOWDER} algorithm employs linear models to solve LOVO problems and has several practical improvements when compared to the theoretical algorithm presented in Section \ref{sec:df_algorithm}. One of the most significant changes occurs in definition of the relative reduction coefficient $\rho_{k}$. Note that we need to calculate $\rho_{k}$ at each iteration in order to define the step acceptance, update, and radius correction phases. Since this involves evaluating the function $\fmin$, which can be costly in computational terms, in practice, we allow \texttt{LOWDER} to employ the coefficient proposed by \textcite{CLSS2021}:
\begin{equation*}
    \hat{\rho}_{k} = \frac{f_{i_{k}}(x_{k}) - f_{i_{k}}(x_{k}+d_{k})}{\mathrm{m}_{k}(x_{k}) - \mathrm{m}_{k}(x_{k} + d_{k})},
\end{equation*}
where $i_{k} \in \imin{(x_{k})}$, for up to $\texttt{nrhomax} \in \mathbb{N}$ consecutive iterations. After $\texttt{nrhomax}$ iterations $\rho_{k}$ has to be calculated as presented in (\ref{eq_def_coef_reducao_relativa}). Note that $\hat{\rho}_{k} \leq \rho_{k}$, and so we are being more demanding about the quality of the models and the decrease obtained. During the preliminary tests, $\texttt{nrhomax} = 3$ proved to be efficient.

Another relevant change is the way the solution $d_{k} \in \mathbb{R}^{n}$ of the trust region subproblem (\ref{eq_def_subproblema}) is incorporated into the sample set $\mathcal{Y }$. To do so, we implemented simplified versions of the \texttt{TRSBOX} and \texttt{ALTMOV} functions, developed by \textcite{P2009} for the \texttt{BOBYQA} algorithm. Initially, we compute a solution $d_{k}^{\texttt{TRS}}$ using the \texttt{TRSBOX} algorithm for the linear case. If $\norm{d_{k}^{\texttt{TRS}}} \geq \frac{\Delta_{k}}{2}$ and $\rho_{k} > 0$ (or $\hat{\rho}_{ k} > 0$), the point $x_{new} = x_{k} + d_{k}^{\texttt{TRS}}$ is inserted into $\mathcal{Y}$. Otherwise, we run \texttt{ALTMOV} to look for an alternative direction $d_{k}^{\texttt{ALT}}$, and add the point $x_{new} = x_{k} + d_{k}^{\texttt{ALT}}$ to $\mathcal{Y}$. At each iteration of \texttt{LOWDER}, only one point is added to $\mathcal{Y}$, while another point is removed, following procedures similar to \texttt{BOBYQA}.

In this first version, \texttt{LOWDER} employs only linear models to solve problem (\ref{eq_lovo_sem_derivadas}). In this sense, \texttt{LOWDER} uses the sample set construction mechanism of \texttt{BOBYQA}. As with this solver, we avoided fully rebuilding the models due to the computational cost involved. In general, the model and sample set are only rebuilt if one of the following conditions is satisfied: 
\begin{enumerate}[label=\roman*)]  
\item An index swap occurs, that is $i_{k+1}\neq i_{k}$, and the direction calculated by \texttt{TRSBOX} satisfies $\rho_{k} \geq \eta$ (or $\hat{\rho}_{k} \geq \eta$); or yet,  
\item An index swap occurs, $\rho_{k} \geq 0$ (or $\hat{\rho}_{k} \geq 0$) and $\fmin{(x_{new})}$ is avaliable. 
\end{enumerate} 
In all other cases, the model is updated using the current sample set and the information obtained about the point $x_{new}$. Recall that only evaluations of $f_{i_{k+1}}$ are needed to rebuild the model. Each time the model is rebuilt or updated, we also calculate the QR factorization of the matrix $\mathbf{L}_{L}$ defined in \cite[Section 2]{Schwertner2025}. This information is needed to calculate $d^{\texttt{ALT}}_{k}$ given by ALTMOV and the point that should leave set $\mathcal{Y}$ in case direction $d^{\texttt{TRS}}_{k}$ is accepted.

Given that the radius of the sample region $\delta_{k}$ controls the quality of the model, a solution $\Tilde{x} = x_{k} \in \Omega$ is declared successful if it satisfies $\delta_{k} \leq \delta_{\min}$ and $\beta \pi_{k} \leq \delta_{\min}$ (or $\beta \Tilde{\pi}_{k} \leq \delta_{\min}$), where $\delta_{\min} > 0$ is a parameter defined by the user. The default values for $\beta$ and $\delta_{\min}$ are $1$ and $10^{-8}$, respectively. We say that \texttt{LOWDER} is stalled at iteration $k$ if there is no more room for improvement in the sample set and the model. This situation translates into the case where $\delta_{k} \leq \delta_{\min}$, $\Delta_{k} \leq \delta_{\min}$ and it is not possible to perform any more \texttt{ALTMOV} iterations. In this case, the maximum number of consecutive iterations of the \texttt{ALTMOV} type is controlled by the $\texttt{maxalt} \in \mathbb{N}$ parameter, whose default value is $\abs{\mathcal{Y}}-1$. Note that if the sample and trust-region radii have values greater than $\delta_{\min}$, we allow more than \texttt{maxalt} consecutive iterations with \texttt{ALTMOV} directions. However, as soon as the stalled criterion is satisfied the algorithm exits. For safety, the algorithm also exits if more than $\texttt{maxcrit} \in \mathbb{N}$ successive iterations access the criticality phase (line \ref{alg:criticality}), whose default value is $\abs{\mathcal{Y}}-1$.

The \texttt{LOWDER} solver was implemented in the \texttt{Julia} language, version 1.6.1, and is available in the repository:
\begin{center}
    \url{https://github.com/aschwertner/LOWDER}
\end{center}

\subsection{Numerical experiments}

To benchmark the \texttt{LOWDER}'s performance in solving LOVO black-box problems, we compared our solver with other algorithms able of solving derivative-free optimization problems. These algorithms are Manifold Sampling Primal (\texttt{MS-P}) and Nonlinear Optimization with the MADS algorithm (\texttt{NOMAD}).

The \texttt{MS-P} solver is a \texttt{MatLab} implementation of the Manifold Sampling method proposed in \cite{LMW2019, LM2022}, and is designed to solve bound-constrained nonsmooth composite minimization problems
\begin{equation*}
    \min_{x \in \Omega} f(x) = \min_{x \in \Omega} h\left(F(x)\right),
\end{equation*}
where $F:\mathbb{R}^{n} \to \mathbb{R}^{p}$, $h:\mathbb{R}^{p} \to \mathbb{R}$ is a continuous selection (see \cite[Definition 1]{LM2022}), and the feasible set is a subset of the $n$-fold Cartesian product of the extended reals $\mathbb{R} \cup \{-\infty, \infty \}$ defined by bound constraints of the form $\Omega = \left\{x : l \leq x \leq u  \right\}$. The manifold sampling algorithm was initially proposed by \textcite{LMW2019} as a variant of gradient sampling, and can be classified as model-based derivative-free method. In this method, models of $F$ are combined with sampled information about the function $h$ to construct local models called smooth master models, for use within a trust-region framework. \texttt{MS-P} builds fully linear quadratic models using the interpolation and regression mechanisms of \texttt{POUNDERS} \cite{W2017}. We are grateful to Jeffrey Larson, co-author of the method, for providing the \texttt{MS-P} and \texttt{POUNDERS} codes and being available to help us.

The \texttt{NOMAD} software is a \texttt{C++} implementation of the Mesh Adaptive Direct Search (MADS) algorithm, solves black-box optimization problems in general, and uses the progressive barriers method to deal with problems with constraints. In this work, we use version 4.2.0 of \texttt{NOMAD} \cite{ADMT2022} through the interface for the \texttt{Julia} language called \texttt{NOMAD.jl}, version 2.2.1 \cite{MPS2020}. This version of \texttt{NOMAD} presents a series of improvements compared to previous versions, especially in comparison to version 3.9, such as a new software architecture and support for new pool mechanisms and search algorithms, among others. All the algorithms were run with their default parameters on an AMD Ryzen 7 1700X 3.40GHz with 8 cores (16 threads) and 16GB of RAM and Linux Ubuntu Budgie 22.04.1 LTS operating system.

Our benchmark suite includes three sets of test problems denominated by MW, HS, and QD. The MW set contains problems proposed by Moré and Wild in \cite{MW2009} for benchmarking derivative-free optimization algorithms and comprises $53$ unconstrained problems. These problems are variations of $22$ nonlinear least squares functions taken from the \texttt{CUTEr} collection \cite{GOT2003}. Each function is defined by $r$ components in $n$ variables. By combining different values for $n$ and $r$, and distinct starting points, we obtain all the problems in MW test set.

The HS test set includes $87$ bound-constrained problems selected and modified from the original collection published by \textcite{HS1980} for testing nonlinear programming algorithms. Initially, we selected $8$ problems from \cite{HS1980} with bound constraints and different objective functions.
By combining two, three, or four of these problems, a new objective function $\fmin$ is defined. The problem dimension is the largest dimension among the combined problems, and the bound constraints are taken as the intersection of the original boxes. If this procedure generates constraints with fixed values, that is $l_{i}=u_{i}$ for some index $i \in \{1, \dots, n\}$, then the problem is discarded.

Lastly, the QD test set has $5$ subsets with $50$ problems each, all with bound constraints. The purpose of this test set is to demonstrate the impact of the number of functions $f_{i}$ of the objective function $\fmin$ in the performance of the algorithms. In this sense, the subsets of QD consist of problems with an increasing number of component functions based on the general formula:
\begin{equation*}
    f_{i}(x) = 5^{i} + \frac{1}{2} \sum_{j=1}^{n} a^{i}_{j} \cdot ( x_{j} - b^{i}_{j} )^2,
\end{equation*}
where $a^{i} \in [0, 1000]^{n}$, and $b^{i} \in [0, 10]^{n}$ are vectors built by the pseudorandom number generator \texttt{MarsenneTwister} from the \texttt{Random} package of the \texttt{Julia} language, with the same seed. 
We generate the problems in the test subsets with $r \in \{10, 25, 50, 75, 100 \}$ component functions, respectively. We employ the same seed to generate vectors $a^{i}$ and $b^{i}$ to ensure that the generated component functions come into problems belonging to test sets with larger $r$ values. In this way, the component functions generated for the QD10 test subset are present in the other subsets, and they are complemented by new functions $f_{i}$ as the value of $r$ increases.
In our case, all problems were generated with dimension $n = 10$, have the same bound constraints $l = [0, \dots, 0]$ and $u = [10, \dots, 10]$, and the starting point $x_{0} \in \mathbb{R}^{n}$ is set to the center of the box. Figure \ref{imagem_exemplo_QD} ilustrates a two dimensional example of objective function generated by this procedure, with $10$ component functions. 

\begin{figure}[ht]
    \centering
    \begin{minipage}{0.5\textwidth}
        \centering
        \includegraphics[width=0.95\textwidth]{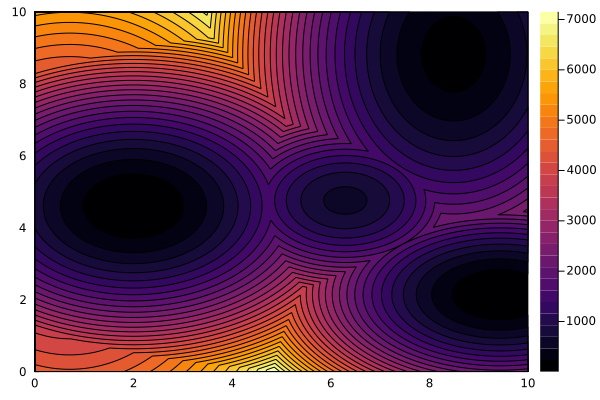}
    \end{minipage}\hfill
    \begin{minipage}{0.5\textwidth}
        \centering
        \includegraphics[width=0.95\textwidth]{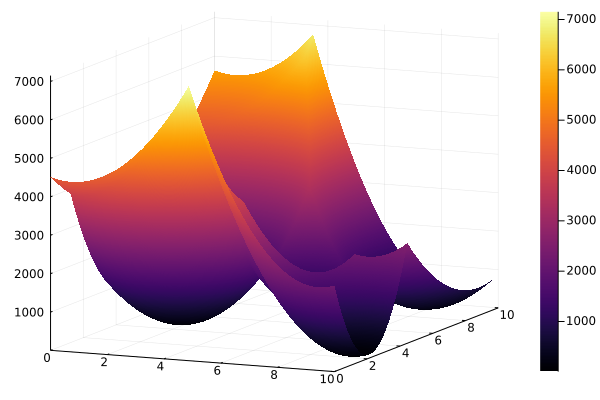}
    \end{minipage}
    \caption[Contour and surface plot of an example of QD problem.]{Contour plot (left) and surface plot (right) for an example of objective function $\fmin$ defined by the generating function of QD test set.}
    \label{imagem_exemplo_QD}
\end{figure}

We also present in Table \ref{tabela_informacoes_conjuntos_teste} the maximum and minimum values for the dimension $n \in \mathbb{N}$ and the number of component functions $r \in \mathbb{N}$ of the problems of each test set.

\begin{table}[ht]
    \centering
    \begin{tabular}{@{}ccccc@{}}
    \toprule
    Test set & $n_{\min}$ & $n_{\max}$ & $r_{\min}$ & $r_{\max}$ \\ \midrule
    MW       & $2$  & $12$ & $2$  & $65$  \\
    HS       & $2$  & $10$ & $2$  & $4$ \\
    QD       & $10$ & $10$ & $10$ & $100$ \\ \bottomrule
    \end{tabular}
    \caption[Dimension and number of component functions of the test sets.]{Dimension and number of component functions of MW, HS and QD test sets.}
    \label{tabela_informacoes_conjuntos_teste}
\end{table}

We use data profiles \cite{MW2009} to compare the performance of \texttt{LOWDER}, \texttt{MS-P}, and \texttt{NOMAD}, over our benchmark test suite. We are interested in comparing the function values obtained by each of the algorithms. Thus, we consider that a method has solved a problem with tolerance level $\tau > 0$ after $t$ function evaluations if the iterate $x^{t}$ satisfies 
\begin{equation}\label{eq_criterio_data_profile}
    f(x^{t}) \leq f_{L} + \tau \left( f(x_{0}) - f_{L} \right),
\end{equation}
where $x_{0}$ is the starting point of the problem common to all algorithms, and $f_{L}$ is the smallest function value obtained by the solvers for a given budget of function evaluations. Following Moré and Wild's suggestion \cite[Section 5]{MW2009}, we decided to investigate the behavior of algorithms with a limit of $100$ simplex gradients of the objective function, where one simplex gradient is defined by $n+1$ function evaluations. Since \texttt{MS-P} and \texttt{NOMAD} always evaluate function $\fmin$ completely and \texttt{LOWDER} has the ability to run each component function $f_{i}$, $i \in \mathcal{I}$, independently, for each set of tests considered, we define the budget as $100\left( n_{max} + 1 \right)$ evaluations of $\fmin$ for \texttt{MS-P} and \texttt{NOMAD}, and $100r_{p}\left( n_ {max} + 1 \right)$ evaluations of $f_{i}$ for \texttt{LOWDER}, where $r_{p}$ is the number of component functions of the problem $p$. Therefore, we allow the algorithms to run at least $100$ simplex gradients of the objective function $\fmin$ for each problem in the test set.

To build the data profiles, we recorded the function values of $\fmin$ accessed by the \texttt{MS-P} and \texttt{NOMAD} solvers and the function values of $f_{i}$ computed by \texttt{LOWDER} for each one of the selected problems. Once the solver satisfies the criterion (\ref{eq_criterio_data_profile}) for a problem $p$ for the first time after $t$ function evaluations, its data profile row is incremented on the vertical axis by $\frac{1}{\abs{\mathcal{P}}}$ at the point $\frac{t}{(n_{p} + 1)}$, in the case of \texttt{MS-P} and \texttt{NOMAD}, or at the point $\frac{t}{r_{p}(n_{p} + 1)}$, for \texttt{LOWDER}. Therefore, our metric of simplex gradient evaluations for $\fmin$ is maintained.

The codes used to generate the numerical tests and data profiles presented in this chapter are available at:
\begin{center}
    \url{https://github.com/aschwertner/LOWDER_Numerical_Tests}
\end{center}

\subsubsection{MW test set}

The MW set is our main problem test set since it was designed by \textcite{MW2009}, especially to benchmark unconstrained derivative-free algorithms. In Figure \ref{imagem_data_profile_mw}, we present four data profiles for the MW test set. Each plot shows the percentage of problems solved for a specified tolerance $\tau$ as a function of a computational budget of simplex gradients of $\fmin$. As we can see, \texttt{LOWDER} and \texttt{MS-P} solve around $90\%$ of the problems for all tolerance levels $\tau$, up to the fixed budget. The difference in robustness between this two solvers is less than $5\%$. We also note that the behavior of \texttt{NOMAD} its very different for distinct levels of tolerance, ranging from around $60\%$ to less than $40\%$, with $\tau = 10^{-1}$ and $\tau = 10^{-7}$, respectively.

\texttt{LOWDER} is very efficient at solving problems in the MW test suite, solving about $90\%$ of the problems with less than $20$ simplex gradients. \texttt{MS-P} has a behavior very close to \texttt{LOWDER}, outperforming the latter by about $2\%$ for $\tau = 10^{-1}$, but loses performance as the tolerance decreases, solving only $80\%$ of problems for $\tau = 10^{-7}$. Although this behavior is expected, since \texttt{LOWDER} uses mainly information of the $f_{i}$'s, we must recall that \texttt{MS-P} uses quadratic models, while \texttt{LOWDER} uses linear ones. For this computational budget and $\tau = 10^{-1}$, \texttt{NOMAD} can solve $50\%$ of the problems in the test set. However its performance drops and stabilizes at around $40\%$ for other values of tolerance. Inspecting the \texttt{NOMAD} execution data, we can see three distinct behaviors that can explain its low performance in the MW test set: difficulty in reducing the objective function, low convergence rate, and convergence to weak stationary points that are local minima, while \texttt{LOWDER} and \texttt{MS-P} can converge to global minimum points.

\begin{figure}[ht]
    \centering
    \begin{minipage}{0.5\textwidth}
        \centering
        \includegraphics[width=0.95\textwidth]{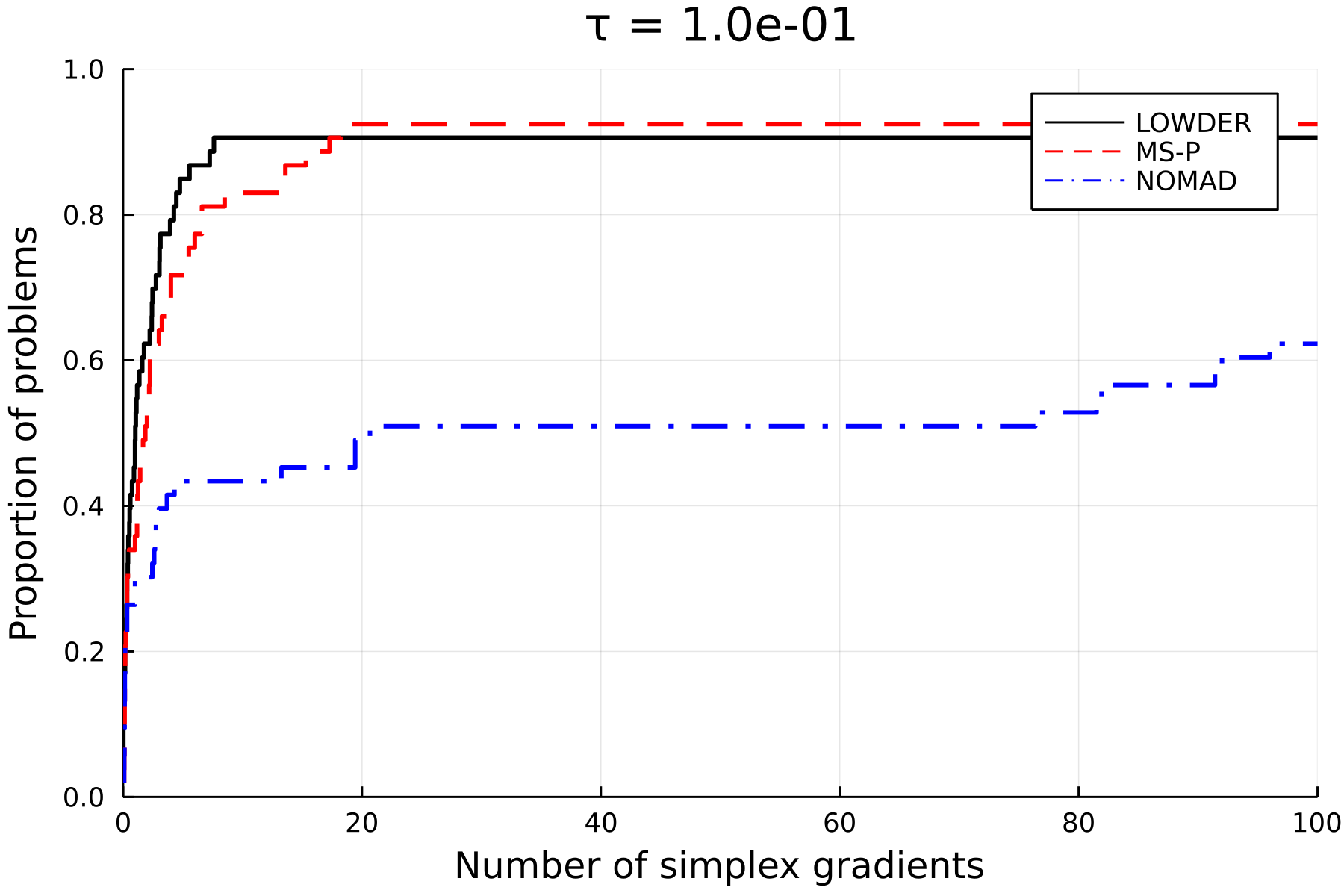}
    \end{minipage}\hfill
    \begin{minipage}{0.5\textwidth}
        \centering
        \includegraphics[width=0.95\textwidth]{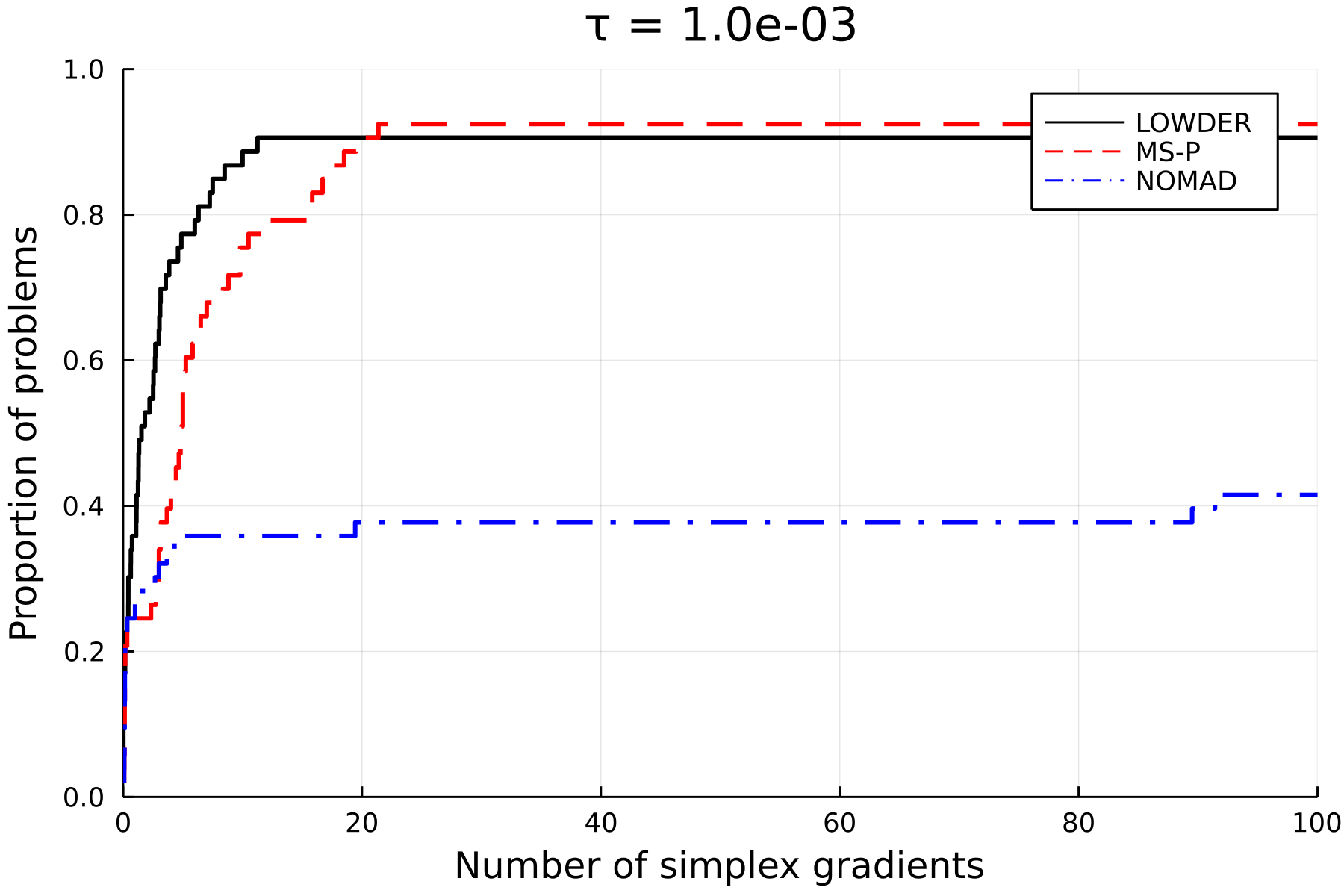}
    \end{minipage}
    \vskip\baselineskip
    \begin{minipage}{0.5\textwidth}
        \centering
        \includegraphics[width=0.95\textwidth]{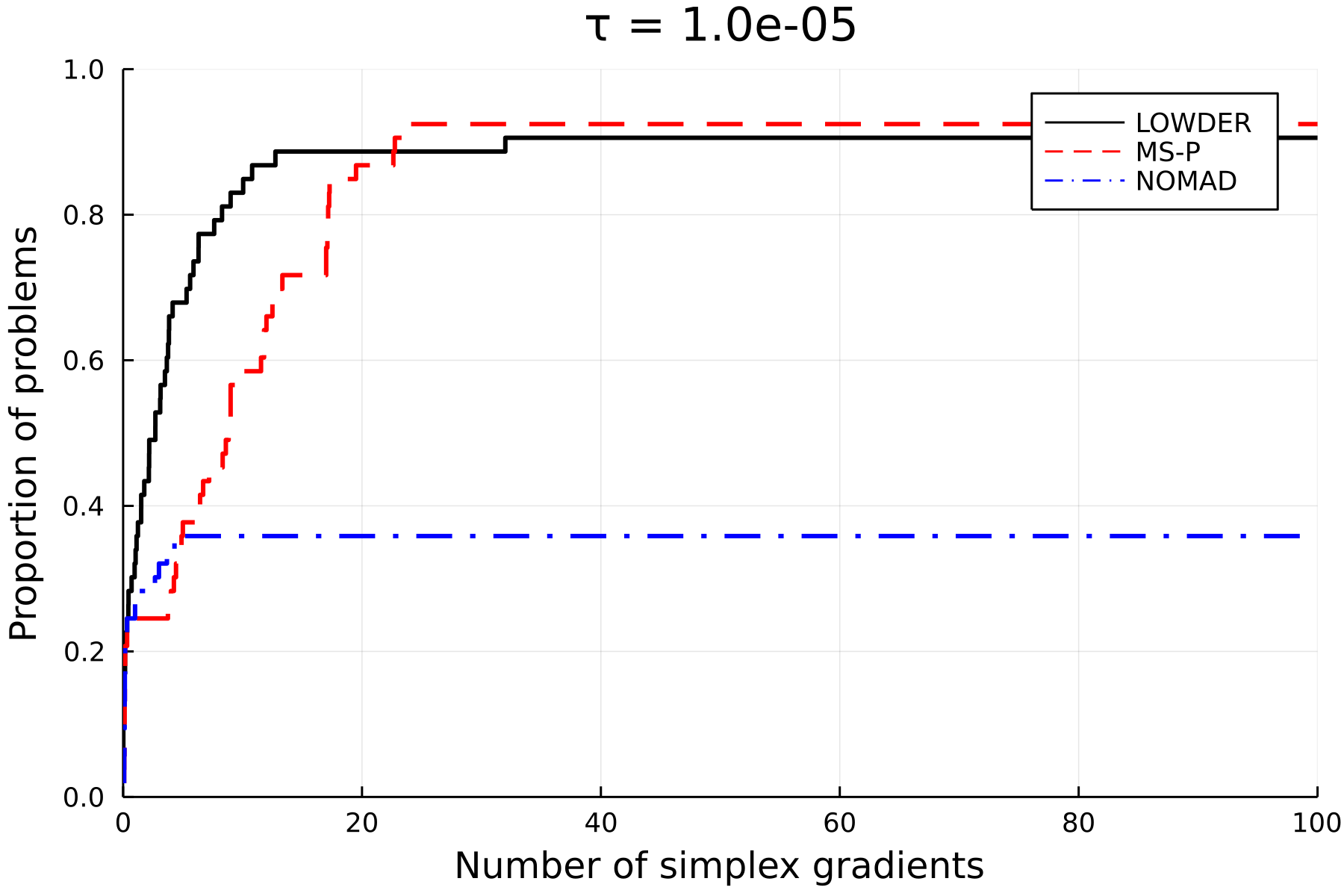}
    \end{minipage}\hfill
    \begin{minipage}{0.5\textwidth}
        \centering
        \includegraphics[width=0.95\textwidth]{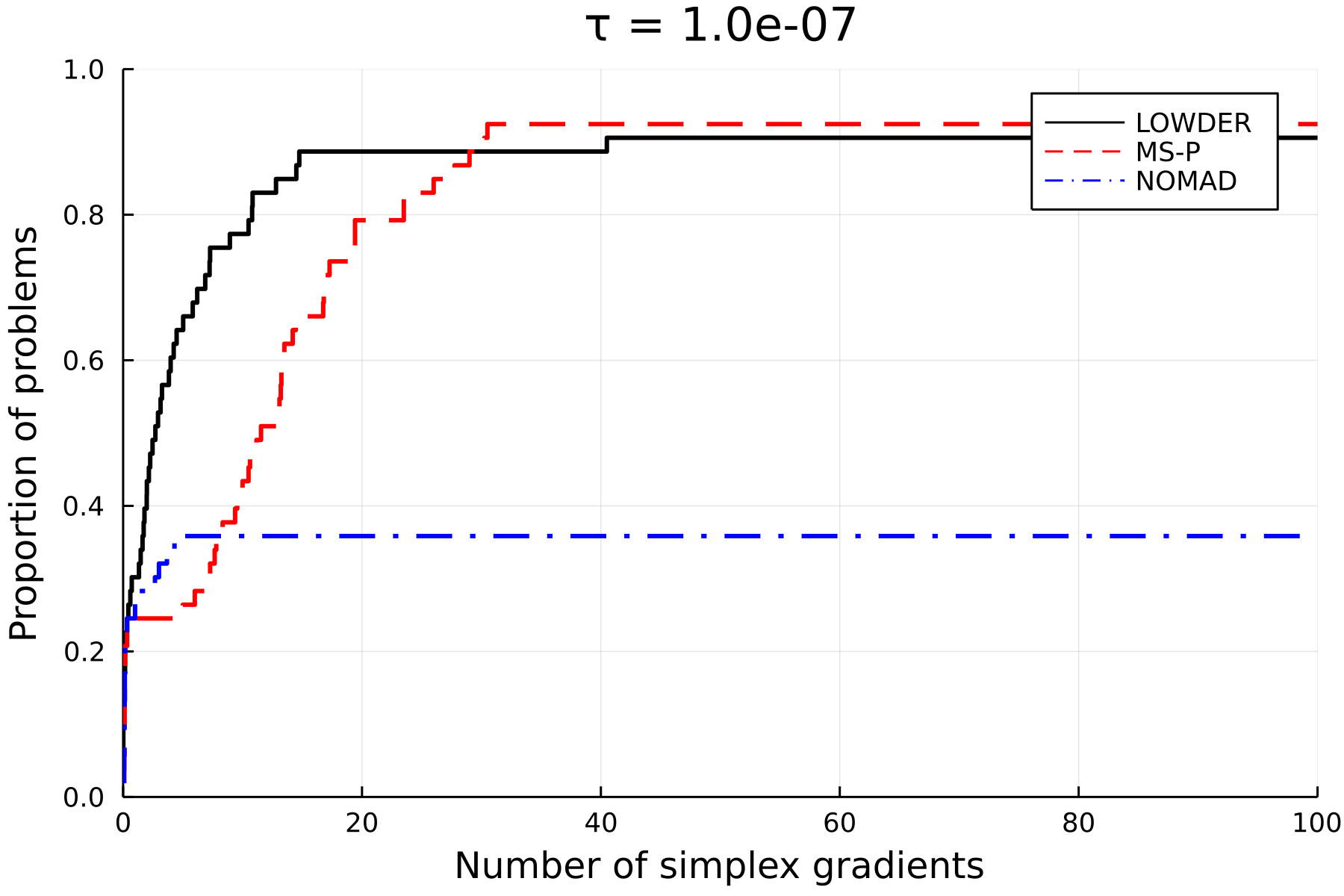}
    \end{minipage}
    \caption[Data profiles for the problems in MW test set.]{Data profiles for the problems in MW test set with tolerance $\tau \in \left\{10^{-1}, 10^{-3}, 10^{-5}, 10^{-7}\right\}$.}
    \label{imagem_data_profile_mw}
\end{figure}

\subsubsection{HS test set}

Analyzing Figure \ref{imagem_data_profile_hs}, we can see that \texttt{NOMAD} can solve all problems for $\tau = 10^{-1}$ and can solve $95\%$ or more problems for other values of tolerance. \texttt{LOWDER} and \texttt{MS-P} have similar performances, alternating in the second position. When considering the computational budget of $100$ simplex gradients, \texttt{LOWDER} is overcome by \texttt{MS-P} by $1\%$ to $3\%$. For budget values between $20$ and $40$ simplex gradients, \texttt{LOWDER} performs slightly better than \texttt{MS-P}.

The great performance of \texttt{NOMAD} can be explained by the small size of the problems and by the recognized ability of the algorithm to solve complex problems, especially for non-linear problems with constraints, as is the case of this test set. Note that \texttt{LOWDER} employs only linear models to solve the trust-region subproblem (\ref{eq_def_subproblema}). This can harm its performance in the case of strongly non-linear problems since the solutions generated by \texttt{TRSBOX} tend not to satisfy the conditions of the step acceptance phase (line \ref{alg:acceptance}), favoring set improvement iterations with directions calculated by \texttt{ALTMOV}, which may not be directions in which the objective function decrease. Another factor that can impact the quality of the solutions obtained by \texttt{LOWDER} is the existence of several problems in HS test set that have an infinity of local minima. \texttt{NOMAD} has two distinct mechanisms that help it escape from local minima. The first one is a global search procedure called \texttt{SEARCH} step, which can return any point on the underlying mesh, always trying to identify points that improve the best solution found so far. The second is a local search called the \texttt{POOL} step, and your purpose is to generate trial mesh points in the vicinity of the best solution \cite{ADMT2022}. Another behavior of the \texttt{NOMAD} solver that we noticed during the execution of these problems was its tendency to find local and global solutions on the boundary of the feasible set. Among the $87$ problems that constitute the HS test set, \texttt{NOMAD} found boundary solutions for $55$ of them. Considering its good performance in the data profile, this suggests the solver is very efficient when exploring and evaluating the limits of the feasible set.

\begin{figure}[ht]
    \centering
    \begin{minipage}{0.5\textwidth}
        \centering
        \includegraphics[width=0.95\textwidth]{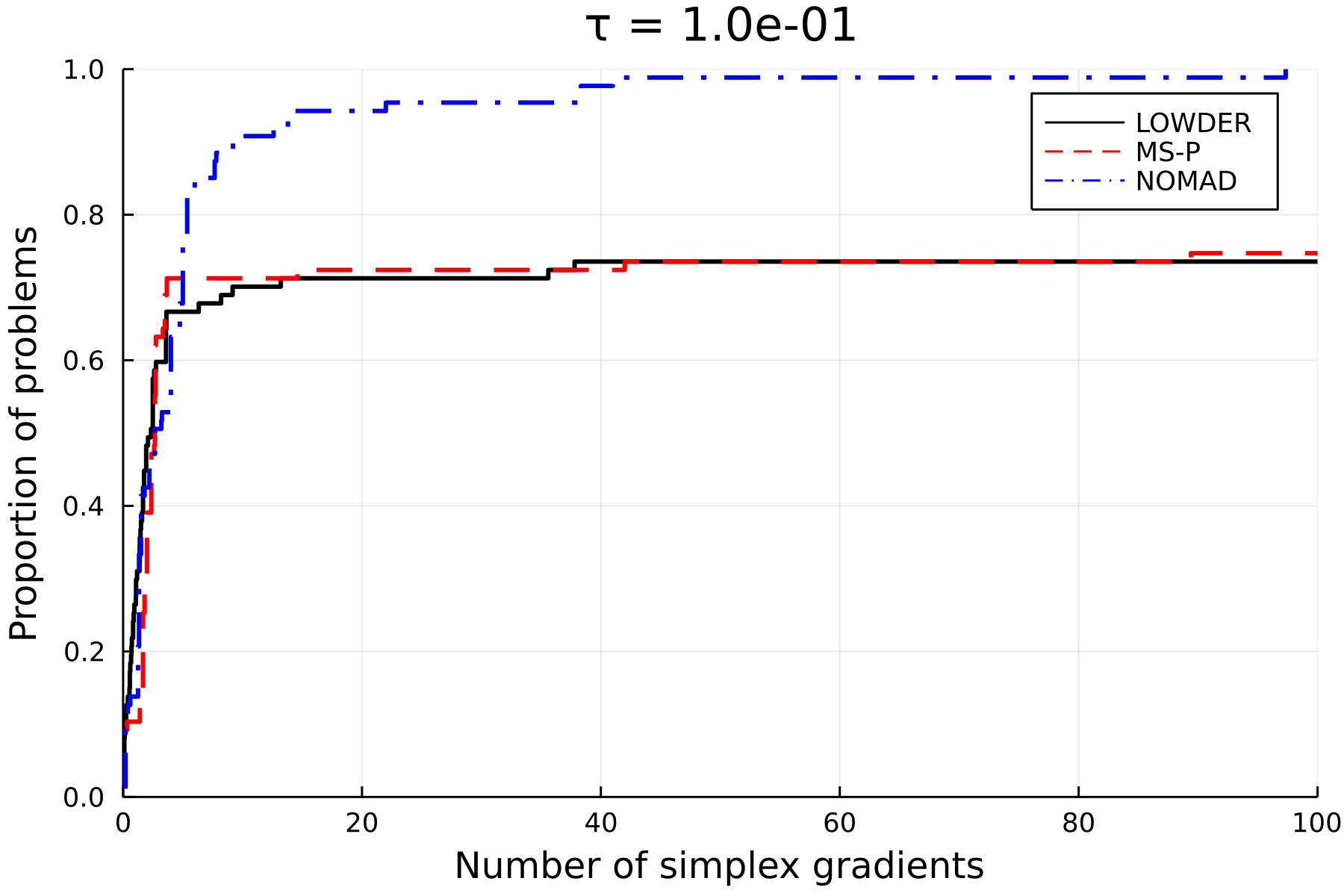}
    \end{minipage}\hfill
    \begin{minipage}{0.5\textwidth}
        \centering
        \includegraphics[width=0.95\textwidth]{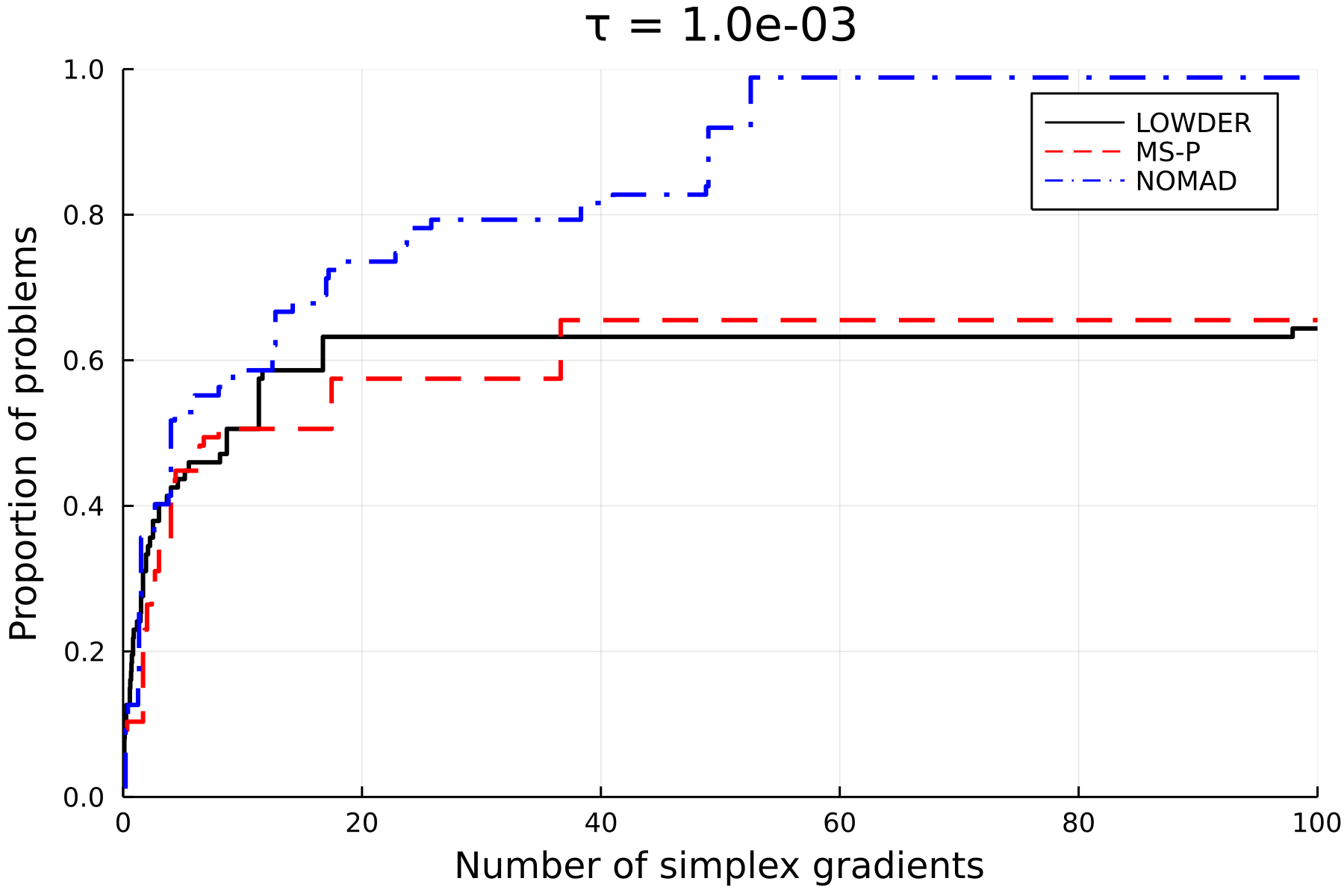}
    \end{minipage}
    \vskip\baselineskip
    \begin{minipage}{0.5\textwidth}
        \centering
        \includegraphics[width=0.95\textwidth]{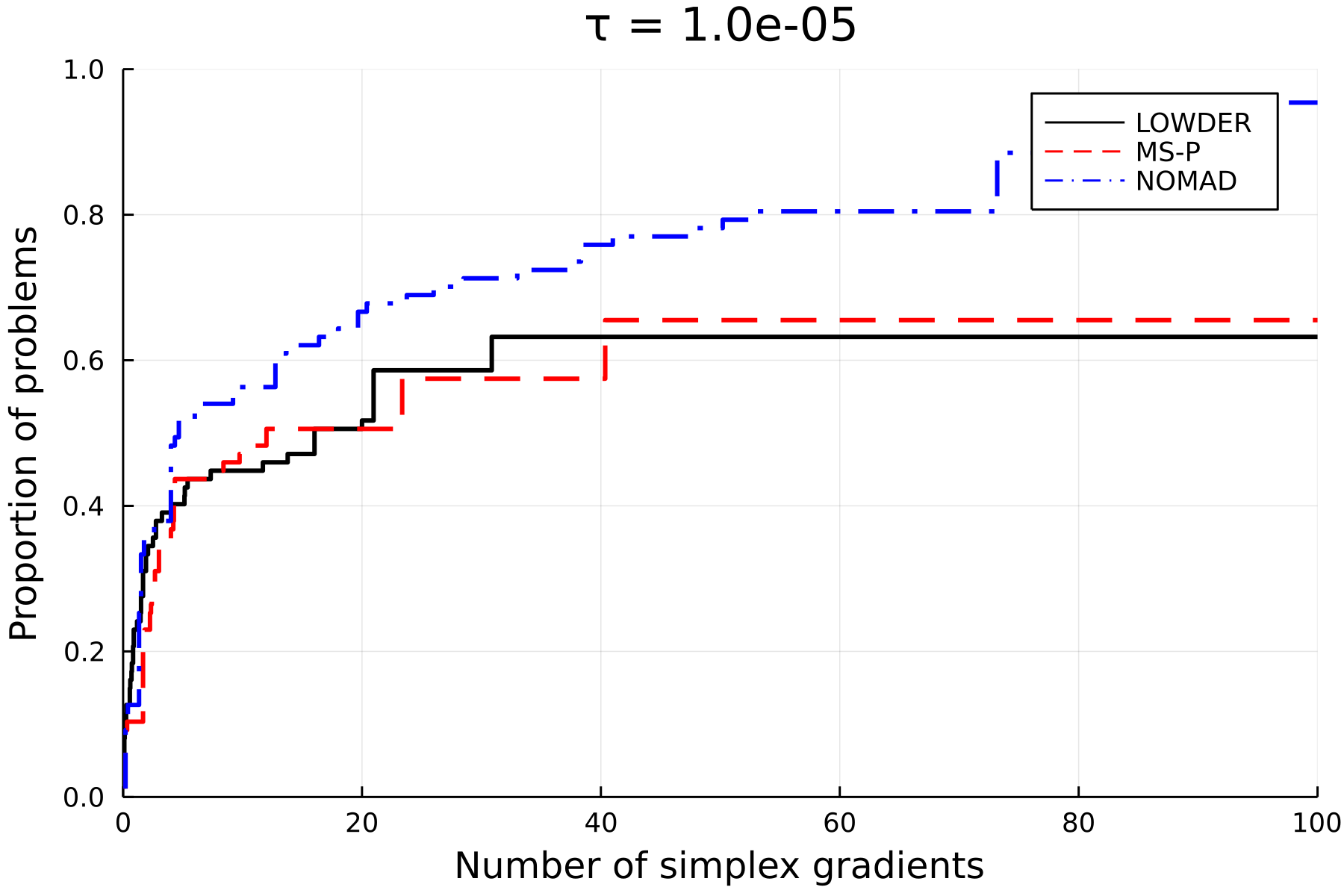}
    \end{minipage}\hfill
    \begin{minipage}{0.5\textwidth}
        \centering
        \includegraphics[width=0.95\textwidth]{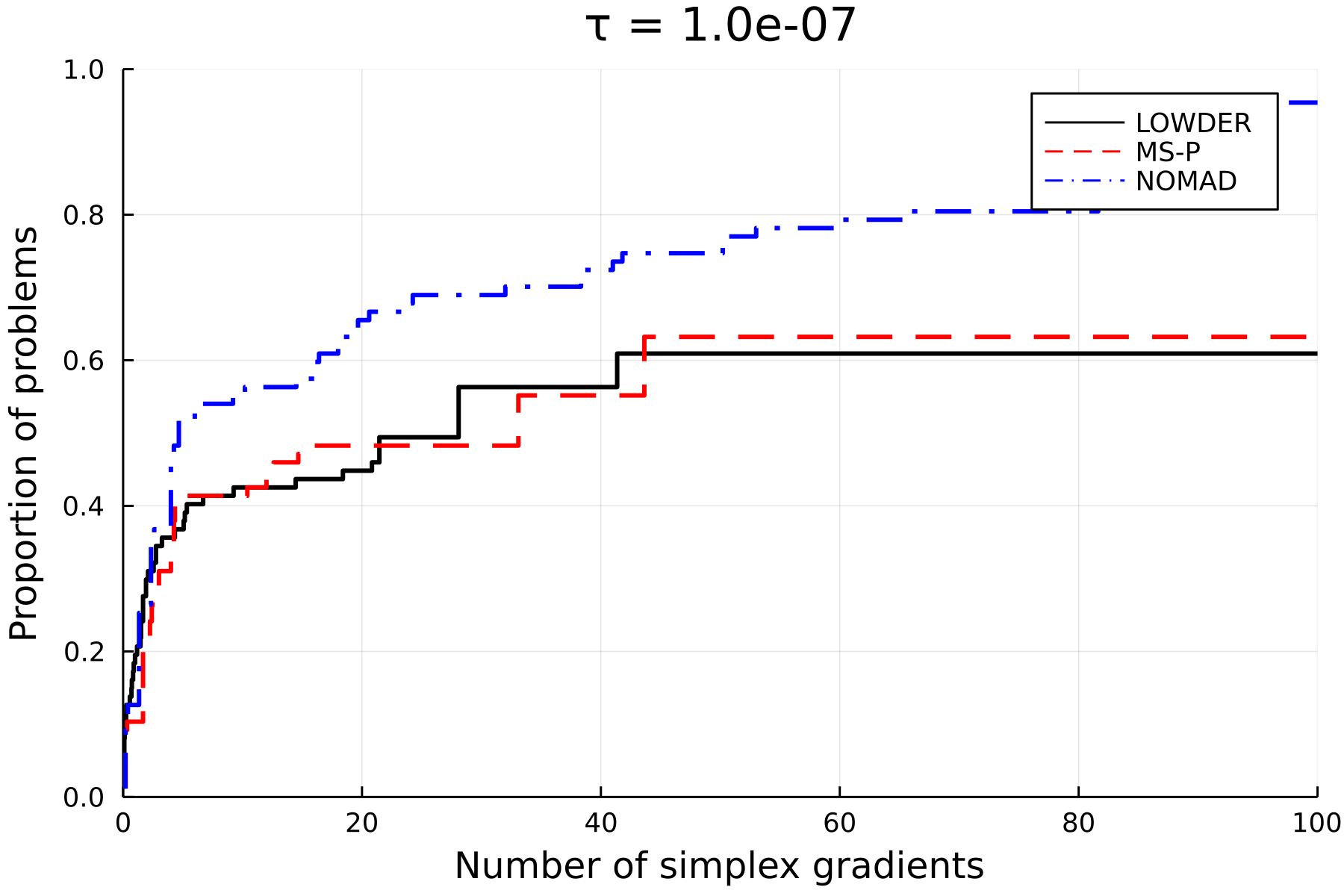}
    \end{minipage}
    \caption[Data profiles for the problems in HS test set.]{Data profiles for the problems in HS test set with tolerance $\tau \in \left\{10^{-1}, 10^{-3}, 10^{-5}, 10^{-7}\right\}$.}
    \label{imagem_data_profile_hs}
\end{figure}

When looking at the proportion of problems solved by the three solvers, we can see that they can solve about $40\%$ of problems with $5$ simplex gradients or less. From that point on, the performance differences are quite significant, especially when comparing \texttt{NOMAD} with the other solvers. By setting $\tau = 10^{-3}$, \texttt{NOMAD} can solve $60\%$ of problems with just $13$ simplex gradients, while \texttt{LOWDER} needs $17$, and \texttt{MS-P} needs about $37$ simplex gradients. Another situation worth mentioning occurs with $\tau = 10^{-7}$. In this case, considering the $50\%$ mark of problems solved, \texttt{NOMAD} can reach this value with $6$ simplex gradients, while \texttt{LOWDER} and \texttt{MS-P} need approximately $28$ and $33$ simplex gradients, respectively.

\subsubsection{QD test set}

\begin{figure}[ht]
    \centering
    \begin{minipage}{0.5\textwidth}
        \centering
        \includegraphics[width=0.95\textwidth]{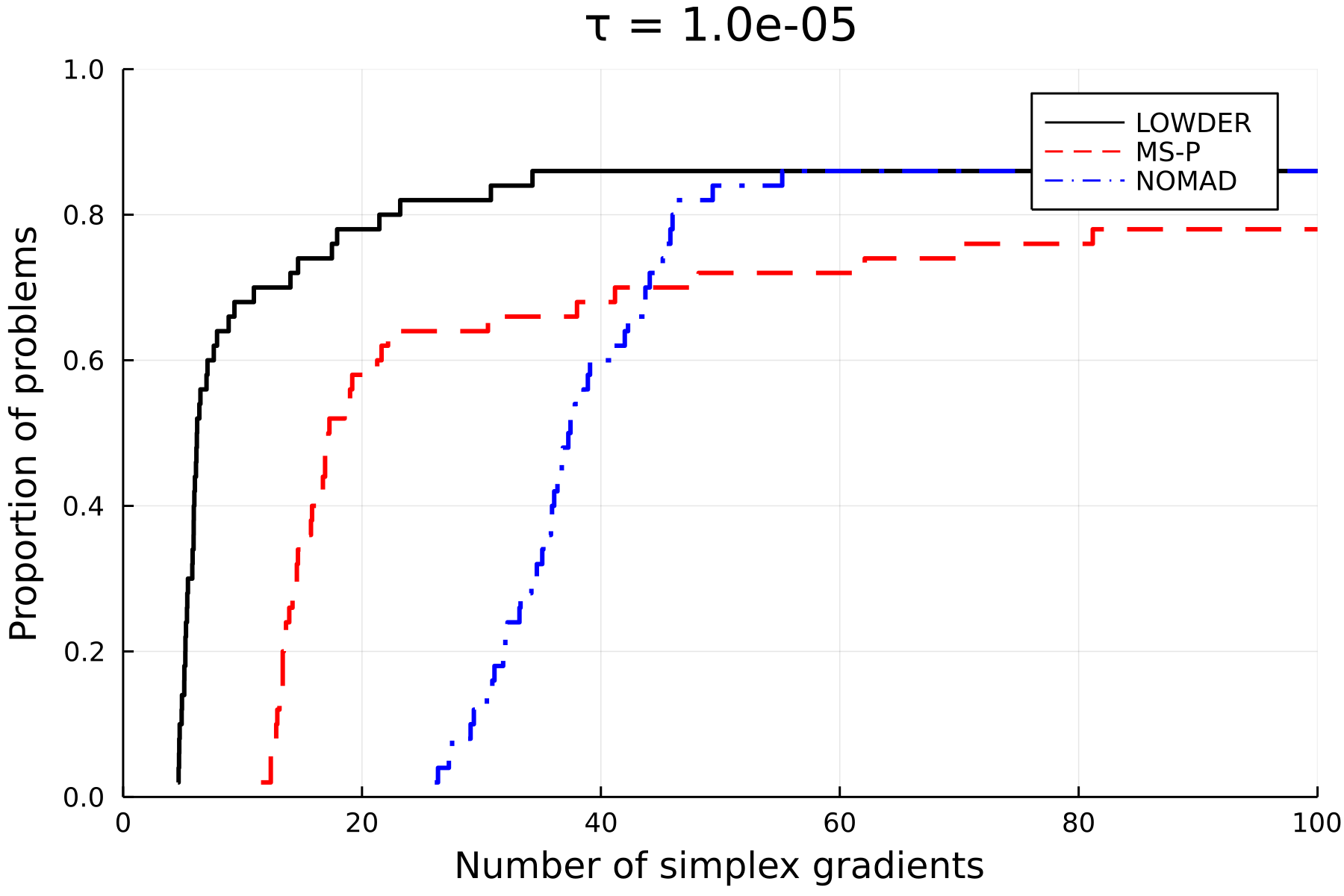}
    \end{minipage}\hfill
    \begin{minipage}{0.5\textwidth}
        \centering
        \includegraphics[width=0.95\textwidth]{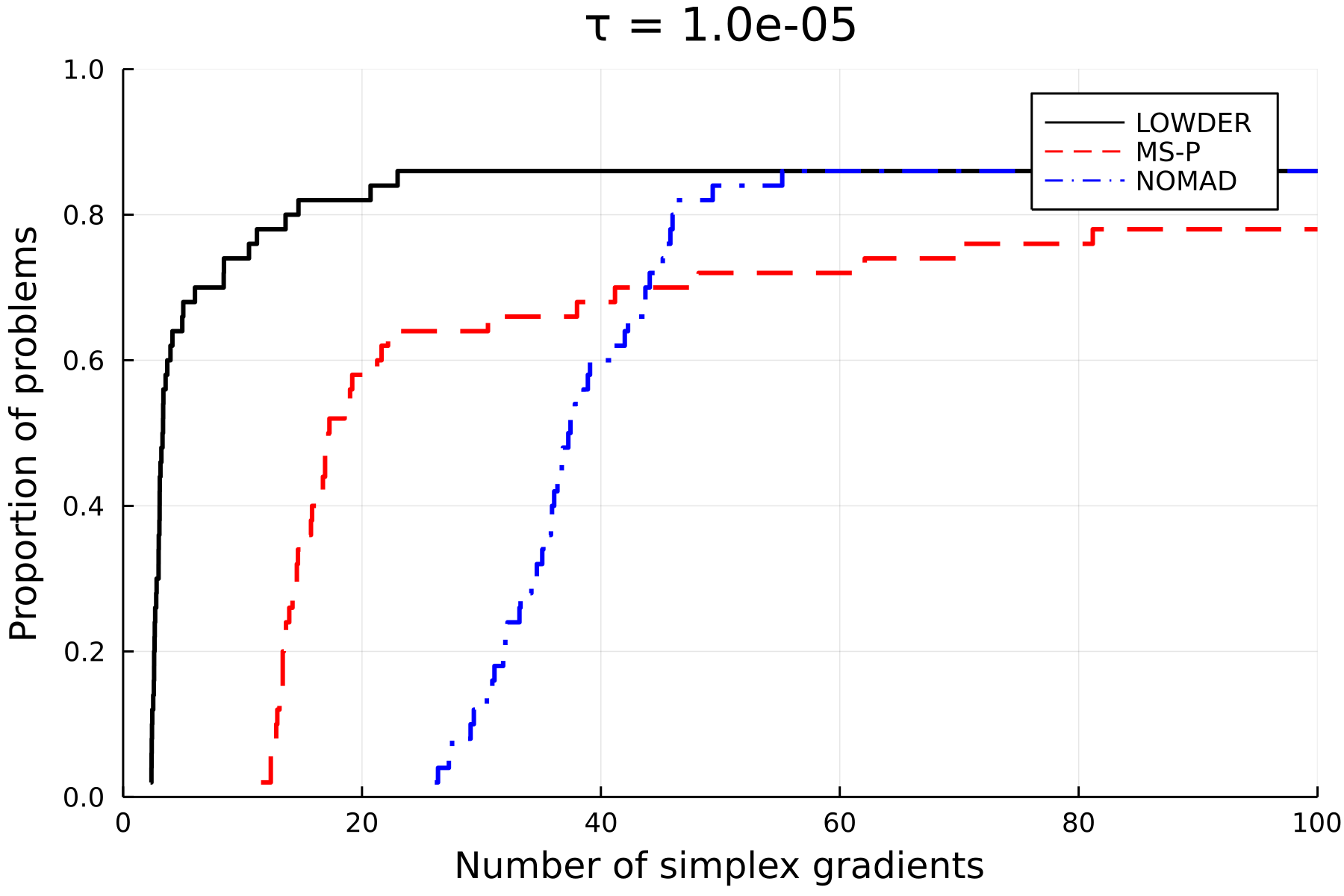}
    \end{minipage}
    \vskip\baselineskip
    \begin{minipage}{0.5\textwidth}
        \centering
        \includegraphics[width=0.95\textwidth]{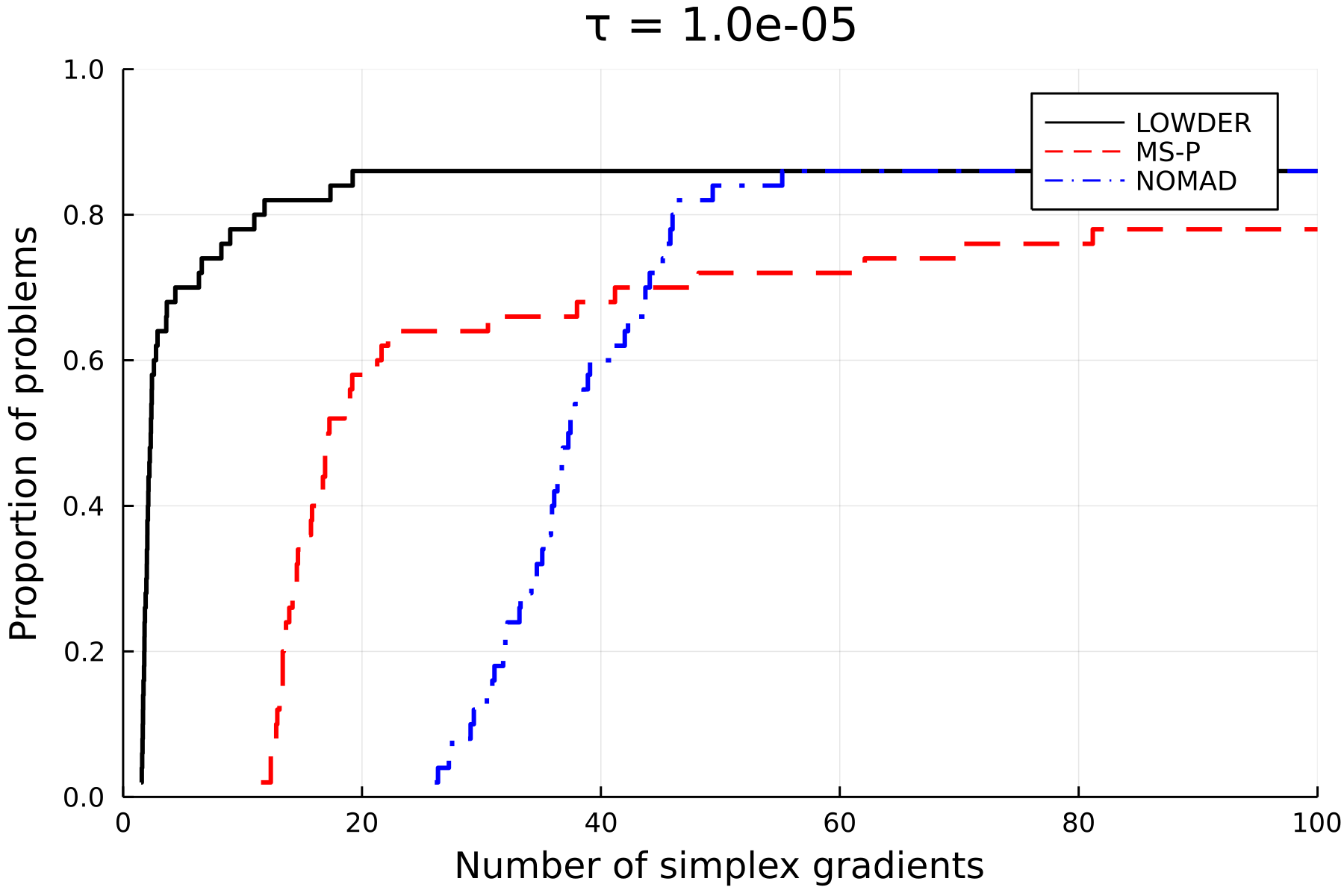}
    \end{minipage}\hfill
    \begin{minipage}{0.5\textwidth}
        \centering
        \includegraphics[width=0.95\textwidth]{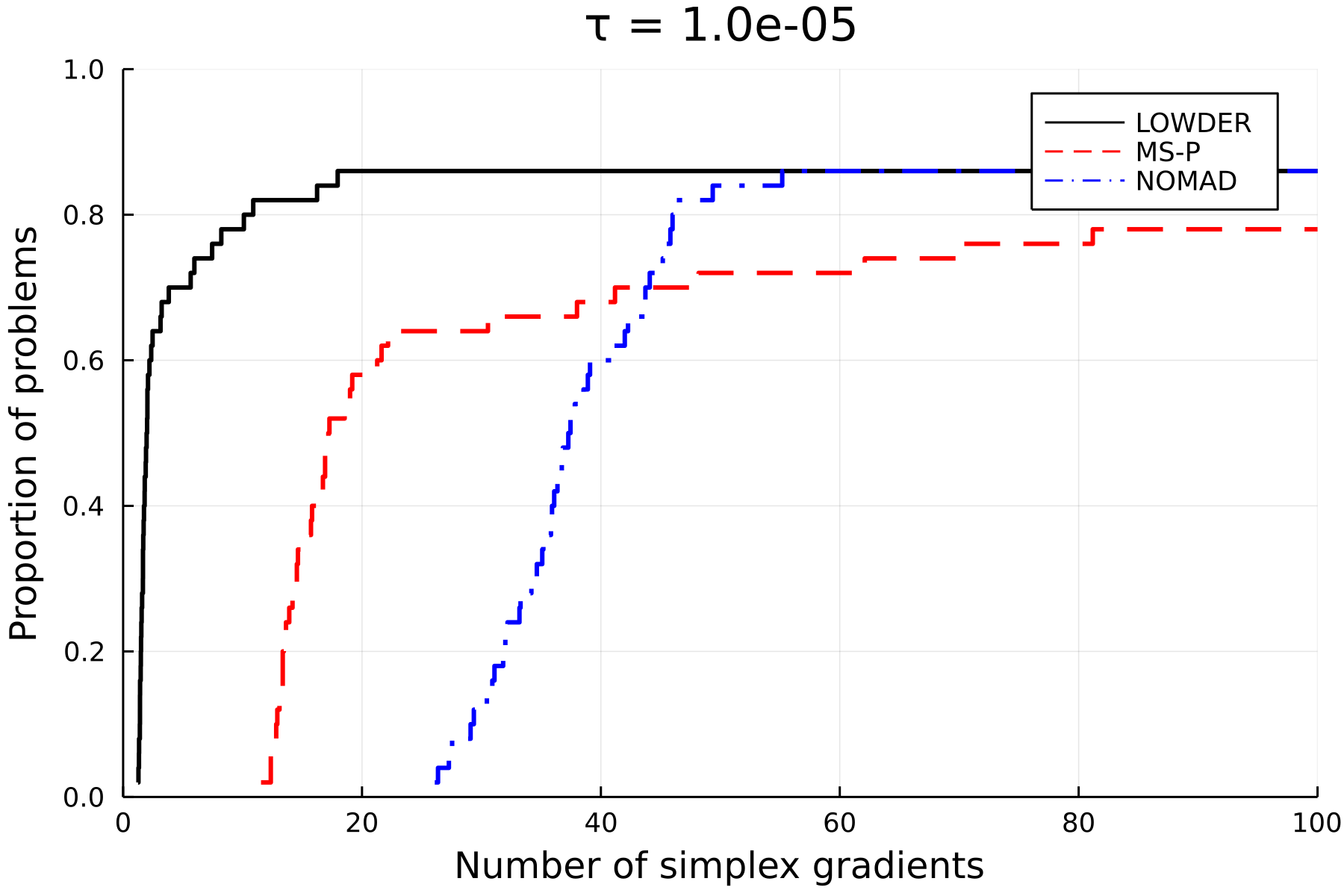}
    \end{minipage}
    \vskip\baselineskip
    \begin{minipage}{0.5\textwidth}
        \centering
        \includegraphics[width=0.95\textwidth]{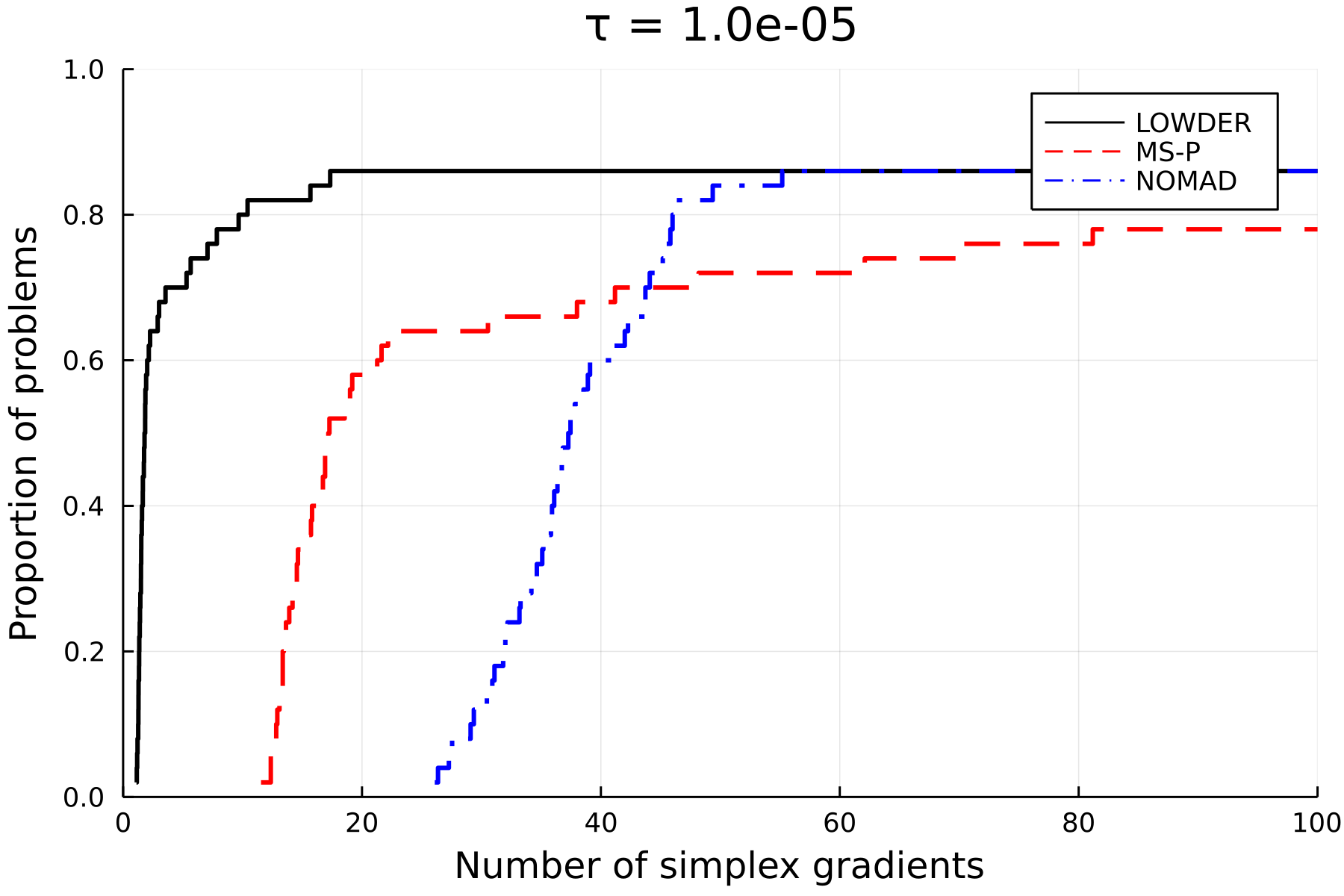}
    \end{minipage}
    \caption[Data profiles for the problems in QD test set.]{Data profiles for the problems in QD test set, with $\tau = 10^{-5}$, and $r \in \left\{ 10, 25, 50, 75, 100 \right\}$.}
    \label{imagem_data_profile_qd}
\end{figure}

Our goal in this test set is to show the benefits that \texttt{LOWDER} enjoys due to exploring the structure of LOVO problems. Figure \ref{imagem_data_profile_qd} presents the data profiles generated with tolerance $\tau = 10^{-5}$ for the problems in the QD test set, taking into account the subsets generated with $10$, $25$, $50$, $75$, and $100$ component functions. Analyzing the plots presented, if we consider a computational budget of $100$ simplex gradients, the robustness of the algorithms is not affected by the number of functions $f_{i}$. \texttt{LOWDER} and \texttt{NOMAD} manage to solve approximately $85\%$ of the problems while \texttt{MS-P} solves $77\%$ of the problems, in the $5$ scenarios presented. Note that \texttt{LOWDER} outperforms \texttt{MS-P} and \texttt{NOMAD} by a large amount, especially for computational budgets of less than $40$ simplex gradients. For this fixed budget, \texttt{MS-P} also outperforms \texttt{NOMAD}, but the situation reverses for higher budget values. More specifically, the data profile shows us that for robustness rates above $70\%$, \texttt{NOMAD} is more efficient than \texttt{MS-P} and is able to match \texttt{LOWDER} using less than $60$ simplex gradients.

Another relevant fact is that the curves presented by \texttt{MS-P} and \texttt{NOMAD} do not appear to be affected by the variation in the number of component functions, while the \texttt{LOWDER} data profile curve has a clear tendency to approach the vertical axis according to the number of component functions increases. That is, the performance of \texttt{LOWDER} tends to improve, especially when considering small computational budgets, with less than $20$ simplex gradients. One way to visualize this influence is to verify the number of simplex gradients needed to solve a certain percentage of problems, as shown in Table \ref{tabela_lowder_qd}.

\begin{table}[ht]
    \centering
    \begin{tabular}{@{}cccccc@{}}
    \toprule
    Test subset & $20\%$ & $40\%$ & $60\%$ & $80\%$ & $85\%$ \\ \midrule
    QD10        & $5$  & $6$ & $7$  & $21$ & $34$ \\
    QD25        & $3$  & $3$ & $4$  & $14$ & $23$ \\
    QD50        & $2$  & $2$ & $3$  & $11$ & $19$ \\
    QD75        & $2$  & $2$ & $2$  & $10$ & $18$ \\
    QD100       & $1$  & $2$ & $2$  & $9$  & $17$ \\ \bottomrule
    \end{tabular}
    \caption[Performance of \texttt{LOWDER} in QD test sets.]{Approximate number of function simplex gradients that \texttt{LOWDER} needs to solve given ranges of problems on the QD test subsets.}
    \label{tabela_lowder_qd}
\end{table}

Note that as the number of component functions increases, represented by the number associated with the test subset, there is a tendency for the number of simplex gradients to decrease, and this can be verified for all ranges of problems considered.

\section{Conclusions}\label{sec:conclusions}

In this work, we introduced a new class of low order-value optimization methods, considering an approach reasoned on model-based derivative-free optimization. We presented a derivative-free trust-region algorithm for constrained black-box LOVO problems, which is based on the algorithms proposed by \textcite{CKPRS2013} and \textcite{VKPS2017}, and the ideas discussed by \textcite{AMM2008}. Our algorithm can deal with general closed convex constraints and is specially designed for problems whose objective function values are provided through an oracle (black-box function). Note that we assume that we know how to project an arbitrary point onto the feasible set. Algorithm \ref{alg_lowder} has a structure very similar to the traditional trust-region framework and considers two different radii, one for the sample region and another for the trust-region. We allowed some freedom in the choice of models, as long as the gradient of the model is a good approximation of the gradient of the selected component function, in the sense of well poisedness (Assumption \ref{hip_grad_limitante}).

We discussed global convergence results of the algorithm, adopting common assumptions for this class of problems, as well as an interpretation from the perspective of the classical theory of LOVO problems. 
Inspired by the works of \textcite{CR2019} and \textcite{GJV2016}, we also studied the worst-case complexity analysis of Algorithm \ref{alg_lowder}, which showed us that the number of iterations and function evaluations performed by the algorithm is in line with what is expected for model-based methods, and that the adoption of minimum Frobenius-norm quadratic models is competitive in terms of function evaluations when compared to complete determined models. 

We also presented \texttt{LOWDER}, our implementation of Algorithm \ref{alg_lowder} in \texttt{Julia} language, discussed implementation details and performed numerical tests. In its current version, \texttt{LOWDER} solves bound-constrained black-box LOVO problems and builds only determined linear models. \texttt{LOWDER} has several practical improvements, many of them derived from \texttt{BOBYQA}, a general-purpose derivative-free optimization solver for bound-constrained problems. In particular, \texttt{LOWDER} inherits the initial sampling mechanisms and has simplified versions of the \texttt{TRSBOX} and \texttt{ALTMOV} routines for solving the trust region subproblem (\ref{eq_def_subproblema}) and improving the geometry of the sample set, respectively. Like \texttt{BOBYQA}, we also avoided completely rebuilding models through an update mechanism. 

Since \texttt{LOWDER} is designed for LOVO problems, we compared it with algorithms that can handle, in some way, this type of problem. In this sense, we selected \texttt{MS-P} \cite{LM2022} and \texttt{NOMAD} \cite{ADMT2022}. \texttt{MS-P} is a manifold sampling algorithm for composition minimization problems. \texttt{NOMAD} is a well-established algorithm for black-box optimization problems based on the direct search.

We proposed a test suite with three sets: MW, HS, and QD. MW is our main test set and contains the problems of \textcite{MW2009} for benchmarking derivative-free optimization algorithms. HS is a test set created with a combination of problems from the \textcite{HS1980} collection for testing nonlinear optimization algorithms. Finally, QD is a test set created by us to measure the impact of the number of component functions on the performance of \texttt{LOWDER}. Despite not being the most robust algorithm, \texttt{LOWDER} can solve about $90\%$ of the problems from MW with less than $20$ simplex gradients of the objective function, being the most efficient algorithm for this range of computational budget. In the QD test set, \texttt{LOWDER} is the most efficient and robust algorithm. Although \texttt{NOMAD} is the least efficient algorithm for budgets smaller than $40$ simplex gradients, it outperforms \texttt{MS-P} and matches \texttt{LOWDER} for larger budgets. Despite having similar performances for budgets of up to $5$ simplex gradients in the HS test set, in general, \texttt{NOMAD} had the best performance and robustness, managing to solve practically all problems with a budget of $100$ simplex gradients. \texttt{MS-P} and \texttt{LOWDER} obtain similar performances, solving about $70\%$ of the problems with tolerance $\tau = 10^{-1}$ and little more than $60\%$ of the problems considering smaller values of $\tau$. The worse performance of \texttt{LOWDER} can be explained by the fact that we employ only linear models, and since the problems in HS are strongly nonlinear, this fact can impair the acceptance phases of the step and the general progress of the algorithm.


In order to increase the performance of \texttt{LOWDER}, we can implement the construction of determined and underdetermined quadratic models, based on the mechanisms proposed by \textcite{P2009} for the \texttt{BOBYQA} solver, and also modified versions of \texttt{RESCUE}, to avoid the full reconstruction of the models. In addition, we can improve the implementation of the linear models using more efficient ways to calculate and update the QR factorization. Another possible advance is the usage of the sampling strategies presented by \textcite{HR2021} and thus avoiding situations for which $\Lambda$-poised sets are impossible to be constructed in constrained problems.

Furthermore, we can implement a mechanism of long-term memory and store relevant information about old sample points, such as objective function value, component function index, and stationarity measure. This information can be useful in constructing new sample sets and saving objective function evaluations. When interpreting LOVO as a nonsmooth composite minimization problem, such a memory mechanism can also add information about the minimum function when we calculate the function $\fmin$ completely, similar to what happens in the manifold sampling algorithms proposed in \cite{LM2022, LMZ2021}.

Finally, it is well known that the Low Order-Value Optimization can generalize the nonlinear least-squares problem, as it allows us to discard observations considered outliers, as we can see in \cite{AMMY2009,CLSS2021}. Therefore, we can enhance \texttt{LOWDER} to take advantage of the structure of the least-squares problem, such as well-established algorithms like \texttt{DFO-GN} \cite{CR2019}, \texttt{POUNDERS} \cite{W2017}, and \texttt{DFBOLS} \cite{ZCS2010}, making it a competitive solver in this segment.




\printbibliography

\end{document}